\newcommand{\monthyear}[1]{%
	\def\@monthyear{\uppercase{#1}}}
\newcommand{\volnumber}[1]{%
	\def\@volnumber{\uppercase{#1}}}
	\def\ps@plain{\ps@empty
		\def\@oddfoot{\@monthyear \hfil \thepage}%
		\def\@evenfoot{\thepage \hfil \@volnumber}}
	\def\ps@firstpage{\ps@plain}
	\def\ps@headings{\ps@empty
		\def\@evenhead{%
			\setTrue{runhead}%
			\def\thanks{\protect\thanks@warning}%
			 \uppercase{ }\hfil}%
		\def\@oddhead{%
			\setTrue{runhead}%
			\def\thanks{\protect\thanks@warning}%
			\hfill\uppercase{New expansions for $x^n \pm y^n$ in terms of quadratic forms}}%
		\let\@mkboth\markboth
		\def\@evenfoot{%
			\thepage \hfil \@volnumber}%
		\def\@oddfoot{%
			\@monthyear \hfil \thepage}%
	}%
\theoremstyle{plain}
\numberwithin{equation}{section}
\tikzset{LabelStyle/.style = { rectangle, rounded corners, draw,minimum width = 2em, fill = yellow!50,
		text = red, font =\bfseries},VertexStyle/.append style = { inner sep=5pt,
		font = \large\bfseries},EdgeStyle/.append style = {->, bend left} }
\newtheorem{thm}{Theorem}[section]
\newtheorem{cor}[thm]{Corollary}
\newtheorem{lem}[thm]{Lemma}
\theoremstyle{definition}
\newtheorem{defn}[thm]{Definition}
\newtheorem{exmp}[thm]{Example}
\newtheorem{notn}[thm]{Notation}
\theoremstyle{remark}
\begin{document}
	\monthyear{February 9, 2020}
	\volnumber{Volume, Number}
	\setcounter{page}{1}
	
	\title{New expansions for $x^n \pm y^n$ in terms of quadratic forms }
	\author{Moustafa Ibrahim \\ Department of Mathematics, College of Science, \\ University of Bahrain, Kingdom of Bahrain}
	
	\address{Department of Mathematics, College of Science, University of Bahrain, Kingdom of Bahrain}
	\email{mimohamed@uob.edu.bh}

	\begin{abstract} We prove new theorems for the polynomial expansions of $x^n \pm y^n$ in terms of the binary quadratic forms $\alpha x^2 + \beta xy + \alpha y^2 $ and $a x^2 + bxy + a y^2 $. The paper gives new arithmetic differential approach to compute the coefficients. Also, the paper gives generalization to well-known polynomial identity in the history of number theory. The paper highlights the emergence of a new class of polynomials that unify many well-known sequences including the Chebyshev polynomials of the first and second kind, Dickson polynomials of the first and second kind, Lucas and Fibonacci numbers, Mersenne numbers, Pell polynomials, Pell-Lucas polynomials, and Fermat numbers. Also, this paper highlights the emergence of the notions of trajectories and orbits of certain integers that passes through many well-known polynomials and sequences. The Lucas-Fibonacci trajectory, the Lucas-Pell trajectory, the Fibonacci-Pell trajectory, the Fibonacci-Lucas trajectory, the Chebyshev-Dickson trajectory of the first kind, the Chebyshev-Dickson trajectory of the second kind, and others are new trajectories included in this paper. Also, the Lucas orbit, Fibonacci orbit, Mersenne orbit, Lucas-Fibonacci orbit, Fermat orbit, and others are new orbits included in this paper.  
\end{abstract}
	\maketitle
		\section{Summary Of The Main Results Of The Paper}
\subsection*{The Main Theorems of The Paper}	In this paper, we give new arithmetic differential approach to compute the coefficients of the polynomial expansions of $x^n\pm y^n$ in terms of the binary quadratic forms $\alpha x^2 + \beta xy + \alpha y^2  $ and $a x^2 + bxy + a y^2  $,  $\beta a - \alpha b \neq 0  $. Regarding the polynomials $x^n+y^n$, we prove the following new polynomial expansions for any given variables $\alpha, \beta, a , b$, and for any natural number $n$, 
	
	\begin{equation}
	\label{show example}
\begin{aligned}
&(\beta a  -  \alpha b)^{\lfloor{\frac{n}{2}}\rfloor} \frac{x^n+y^n}{(x+y)^{\delta(n)}} = \\
\sum_{r=0}^{\lfloor{\frac{n}{2}}\rfloor} \frac{(-1)^r}{r!}
(\alpha x^2 + \beta xy +& \alpha y^2)^{\lfloor{\frac{n}{2}}\rfloor -r} (ax^2+bxy+ay^{2})^{r}
\Big(\alpha \frac{{\partial} }{\partial a} + \beta \frac{{\partial}}{\partial b}\Big)^{r} \Psi(a,b,n) 
\end{aligned}
\end{equation}
where  $\delta(n)=1$ for $n$ odd and $\delta(n)=0$ for $n$ even and  $\lfloor{\frac{n}{2}}\rfloor$ is the highest integer less than or equal $\frac{n}{2}$. Regarding the polynomials $x^n-y^n$, we prove and study the following new polynomials expansions 
	\begin{equation}
		\label{show example 2}
\begin{aligned}	 	
&(\beta a -\alpha b)^{\lfloor{\frac{n-1}{2}}\rfloor} \frac{x^n-y^n}{(x-y)(x+y)^{\delta(n-1)}} = \\
\sum_{r=0}^{\lfloor{\frac{n-1}{2}}\rfloor} \frac{(-1)^r}{r!}
(\alpha& x^2 + \beta xy + \alpha y^2)^{\lfloor{\frac{n-1}{2}}\rfloor -r} (ax^2+bxy+ay^{2})^{r}
\Big(\alpha \frac{{\partial} }{\partial a} + \beta \frac{{\partial}}{\partial b}\Big)^{r} \Phi(a,b,n)
\end{aligned}
\end{equation}	
	
\subsection*{Computing The Polynomials $\Psi_r(a,b,n)$ and  $\Phi_r(a,b,n)$} The polynomials $\Psi_r(a,b,n)$ and  $\Phi_r(a,b,n)$ enjoy fascinating arithmetical and differential properties and unify many well-known polynomials. This paper presents three different approaches to compute the polynomials $\Psi_r(a,b,n)$ and  $\Phi_r(a,b,n)$. We  prove in this paper the following explicit formulas
	\begin{equation*}
	\begin{aligned}	 	
\Psi(a,b,n) &= \frac{(2a-b)^{\lfloor{\frac{n}{2}}\rfloor}} {2^n} \left\{  \left( 1 + \sqrt{ \frac{b+2a}{b-2a}} \right)^n + \left( 1 - \sqrt{ \frac{b+2a}{b-2a}} \right)^n \right\}, \\ 
\Phi(a,b,n) &= \frac{(2a-b)^{\lfloor{\frac{n-1}{2}}\rfloor}} {2^n \sqrt{\frac{b+2a}{b-2a}}} \left\{  \left( 1 + \sqrt{ \frac{b+2a}{b-2a}} \right)^n - \left( 1 - \sqrt{ \frac{b+2a}{b-2a}} \right)^n  \right\}.
\end{aligned}	 	
\end{equation*} 

\subsection*{Example For \eqref{show example}} To show simple example for \eqref{show example}, we take $n=4$. Simple computations show that 
\begin{equation}
\begin{aligned}
 \frac{(-1)^0}{0!}
\Big(\alpha \frac{{\partial} }{\partial a} + \beta \frac{{\partial}}{\partial b}\Big)^{0}  \Psi(a,b,4) &= \Psi(a,b,4) = -2 a^2 + b^2 , \\
 \frac{(-1)^1}{1!}
\Big(\alpha \frac{{\partial} }{\partial a} + \beta \frac{{\partial}}{\partial b}\Big)^{1}  \Psi(a,b,4) &= 4a \alpha - 2b \beta , \\
 \frac{(-1)^2}{2!}
\Big(\alpha \frac{{\partial} }{\partial a} + \beta \frac{{\partial}}{\partial b}\Big)^{2}  \Psi(a,b,4) &= -2 \alpha^2 + \beta^2.
\end{aligned}
\end{equation}
Therefore we get the following polynomial identity which is a special case for the main results of the current paper
\begin{align}
\begin{aligned}
\label{example}
(\beta a - \alpha b)^2 (x^4 + y^4) &= (-2 a^2 + b^2)(\alpha x^2 +\beta xy + \alpha y^2)^2 \\
&+ (4a \alpha - 2b \beta)(\alpha x^2 +\beta xy + \alpha y^2)(a x^2 +b xy + a y^2) \\
&+ (-2 \alpha^2 + \beta^2)(a x^2 + b xy + a y^2)^2.
\end{aligned}
\end{align}
 This particular example gives natural generalization to one of the well-known polynomial identities in the history of number theory as we see in section \eqref{history}.
\subsection*{The Emergence of New Class of Polynomials}
For any natural number $n$, and for any  $r = 0, 1, 2, \dots, \lfloor{\frac{n}{2}}\rfloor$, as we see in the current paper, the following polynomials \[  \frac{(-1)^r}{r!}
\Big(\alpha \frac{{\partial} }{\partial a} + \beta \frac{{\partial}}{\partial b}\Big)^{r} \Psi_r(a,b,n) \quad , \quad   \frac{(-1)^r}{r!}
\Big(\alpha \frac{{\partial} }{\partial a} + \beta \frac{{\partial}}{\partial b}\Big)^{r} \Phi_r(a,b,n)  \]
enjoy unexpected new arithmetical and differential properties. Actually, it gives new class of polynomials. Therefore, we need to define the following new polynomials 
\begin{equation*}
\begin{aligned}
\Psi\left(\begin{array}{cc|c}
a & b & n \\ \alpha & \beta & r \end{array} \right) &=  \frac{(-1)^r}{r!}
\Big(\alpha \frac{{\partial} }{\partial a} + \beta \frac{{\partial}}{\partial b}\Big)^{r} \Psi_r(a,b,n), \\
\Phi\left(\begin{array}{cc|c}
a & b & n \\ \alpha & \beta & r \end{array} \right) &=  \frac{(-1)^r}{r!}
\Big(\alpha \frac{{\partial} }{\partial a} + \beta \frac{{\partial}}{\partial b}\Big)^{r} \Phi_r(a,b,n).
\end{aligned}
\end{equation*}
 Also, for any $a,b,\alpha,\beta, \eta, \xi, n$, $\beta a - \alpha b \neq 0,$ we prove the following new and unexpected identities
	\begin{equation}
\begin{aligned}
\sum_{r=0}^{\lfloor{\frac{n}{2}}\rfloor}  \Psi\left( \begin{array}{cc|r} a & b & n \\ \alpha & \beta & r \end{array} \right)  \xi^{\lfloor{\frac{n}{2}}\rfloor  - r}  \eta^r =\Psi(a \xi-\alpha \eta, b \xi - \beta \eta, n), \\
\sum_{r=0}^{\lfloor{\frac{n-1}{2}}\rfloor}  \Phi\left( \begin{array}{cc|r} a & b & n \\ \alpha & \beta & r \end{array} \right)  \xi^{\lfloor{\frac{n-1}{2}}\rfloor  - r}  \eta^r =\Phi(a \xi-\alpha \eta, b \xi - \beta \eta, n).
\end{aligned}
\end{equation}
\subsection*{Unification of Well-Known Sequences} The polynomials $\Psi_r(a,b,n)$ and  $\Phi_r(a,b,n)$ enjoy fascinating arithmetical and differential properties and unify many well-known polynomials and sequences including the Chebyshev polynomials of the first and second kind, Dickson polynomials of the first kind and second kind, Lucas numbers, Fibonacci numbers, Fermat numbers, Pell-Lucas polynomials, Pell numbers and others, see \cite{222}, \cite{333}. 
\subsection*{Links With Fibonacci and Lucas Sequences} We should notice that $\Psi$ and $\Phi$ polynomials are greatly linked with Lucas and Fibonacci sequences through variety of formulas including the following formulas, for any variables $\alpha, \beta$,  
\begin{equation*}
\begin{aligned}
\Psi(-1,-3,n) &= L(n)  &, \qquad  \Psi(1,-3,n) &=
\begin{cases}
F(n)  & \mbox{for $n$ odd } \\
L(n) & \mbox{for $n$ even } \end{cases},    \\
\Phi(-1,-3,n) &= F(n)  &, \qquad  \Phi(1,-3,n) &=
\begin{cases}
F(n)  & \mbox{for $n$ even } \\
L(n) & \mbox{for $n$ odd } \end{cases},
\end{aligned}
\end{equation*}
where $L(n)$ are the Lucas numbers defined by the recurrence relation $L(0)=2,$ $L(1)=1$ and $L(n+1)=L(n)+L(n-1)$ and $F(n)$ are the Fibonacci numbers defined by the recurrence relation $F(0)=0,$ $F(1)=1$ and $F(n+1)=F(n)+F(n-1)$. 
\subsection*{Links With Dickson Polynomials} The Dickson polynomial of the first kind of degree $n$  with parameter $\alpha$, $D_n(x,\alpha)$, and the Dickson polynomial of the second kind of degree $n$  with parameter $\alpha$, $E_{n}(x,\alpha)$, are defined by the following formulas, where $\delta(n)=1$ for $n$ odd and $\delta(n)=0$ for $n$ even,
\begin{equation*}
\begin{aligned}
D_n(x,\alpha) =\sum_{i=0}^{\left\lfloor \frac{n}{2} \right\rfloor}\frac{n}{n-i} \binom{n-i}{i} (-\alpha)^i x^{n-2i} \quad , \quad
E_{n}(x,\alpha) =\sum_{i=0}^{\left\lfloor \frac{n}{2} \right\rfloor}  \binom{n-i}{i} (-\alpha)^i x^{n-2i}.
\end{aligned}
\end{equation*}
We show in this paper that the following relations hold 
\begin{equation*}
\begin{aligned}
D_n(x,\alpha) = x^{\delta(n)} \Psi(\alpha,2\alpha-x^2,n) \quad , \quad 
E_{n}(x,\alpha) = x^{\delta(n)} \Phi(\alpha,2\alpha-x^2,n+1).
\end{aligned}
\end{equation*}
\subsection*{Links With Chebyshev Polynomials} The Chebyshev polynomial of the first kind of degree $n$, $T_n(x)$, and the Chebyshev polynomial of the second kind of degree $n$, $U_{n}(x)$, are defined by the following formulas
\begin{equation*}
\begin{aligned}
T_n(x)=\sum_{i=0}^{\left\lfloor \frac{n}{2} \right\rfloor}(-1)^i \frac{n}{n-i} \binom{n-i}{i} (2)^{n-2i-1} x^{n-2i} \quad , \quad
U_{n}(x)=\sum_{i=0}^{\left\lfloor \frac{n}{2} \right\rfloor}  \binom{n-i}{i} (-1)^i (2x)^{n-2i}.
\end{aligned}
\end{equation*}
We show in this paper that the following relations hold 
\begin{equation*}
\begin{aligned}
T_n(x) = \frac{x^{\delta(n)}}{2^{\delta(n+1)}} \Psi(1,2-4x^2,n) \quad , \quad 
U_{n}(x) =(2x)^{\delta(n)} \Phi(1,2-4x^2,n+1).
\end{aligned}
\end{equation*}
\subsection*{Links With Mersenne and Fermat Numbers} 
The selections $(2,-5)$ and $(-2,-5)$ for $(a,b)$ will connect the $\Psi-$Polynomial with Mersenne numbers. Actually, for each natural number $n$ we get the following desirable formulas
\begin{equation}
\label{(2,-5)-(-2,-5)}
\begin{aligned}
\Psi(-2,-5,n)   &= 2^{n} +(-1)^{n}  &  &, & \Phi(-2,-5,n) &= \frac{2^{n} - (-1)^n}{3},              \\
\Psi(2,-5,n)    &= \frac{2^{n} +1}{3^{\delta(n)}}     &  &  ,  &   \Phi(2,-5,n) &= \frac{2^{n} - 1}{3^{\delta(n-1)}}.    \\
\end{aligned}
\end{equation}
and hence for $p$ odd we get $ \Psi(-2,-5,p) = \Phi(2,-5,p) = 2^{p} -1 = M_{p},$
where $M_{p}$  are called Mersenne numbers. Mersenne primes $M_p$, for some prime $p$, are also noteworthy due to their connection with primality tests, and perfect numbers. For some useful properties for the Mersenne numbers, the readers should consult with \cite{Mersenne}. In number theory, a perfect number is a positive integer that is equal to the sum of its proper positive divisors, that is, the sum of its positive divisors excluding the number itself. It is unknown whether there is any odd perfect number. Recently, \cite{25} showed that odd perfect numbers are greater than $10^{1500}$. Also, we should observe that $\Psi(-2,-5,2^n) = \Psi(2,-5,2^n) = 2^{2^n} + 1 = F_{n},$
where $F_n$ is the Fermat number.
\subsection*{Links With Pell Numbers and Polynomials} 
The Pell numbers $P_n$ are defined by the recurrence relation $ P_0 =0$, $P_1=1$, $P_n=2P_{n-1} + P_{n-2} $. The following formula shows that the $\Phi$ polynomial is also natural generalization for Pell numbers $ P_{n} = 2^{\delta(n-1)} \Phi(-1,-6,n) $. Actually, the Pell numbers $P_n=P_n(1)$, where $P_n(x)$ is the Pell polynomial defined by the recurrence relation $P_0(x)=0,$ $P_1(x)=1$ and $P_{n+1}(x)=2xP_{n}(x)+P_{n-1}(x)$. Similarly, we can deduce the following desirable relation $
P_{n}(x) = (2x)^{\delta(n-1)} \Phi(-1,-2-4x^2,n) $. The Pell-Lucas numbers $Q_n$ are defined by the recurrence relation $ Q_0= 2$,  $ Q_1= 2$, $ Q_n=2Q_{n-1}+Q_{n-2} $. The following formula shows that $\Psi$ polynomial is a generalization for Pell-Lucas numbers $ Q_{n} = 2^{\delta(n)} \Psi(-1,-6,n) $.
Also, the Pell-Lucas numbers $Q_n=Q_n(1)$, where $Q_n(x)$ is the Pell-Lucas polynomial defined by the recurrence relation $Q_0(x)=2, Q_1(x)=2x, Q_{n+1}(x)=2xQ_{n}(x)+Q_{n-1}(x)$. Similarly, we can deduce the following relation $ Q_{n}(x) = (2x)^{\delta(n)} \Psi(-1,-2-4x^2,n) $.

\subsection*{Trajectories and Orbits Connect Well-Known Polynomials and Sequences}
  The notions of \say{trajectories}, specific sequence of polynomials, and \say{orbits}, which are special cases of trajectories, are naturally arise up in this paper. The $\Psi$ and $\Phi$ polynomials are not only fundamental for the future developments of the study of the polynomial expansions of $x^n+y^n$ and $x^n-y^n$ in terms of binary quadratic forms but also to discover new  trajectories and orbits \say{connecting} well-known polynomials and numbers and relate them with unsolved Diophantine problems. For example, the following  particular trajectory  has arithmetic interest.  
  \subsection*{Trajectory From Sums of Powers to Another and Open Questions} 
  For any natural number $n$, we show in this paper that we can establish a trajectory from the polynomials $\Psi_r(n)$ that \say{connects} the polynomials $\frac{x^n+y^n}{(x+y)^{\delta(n)}}$ with the polynomials $\frac{z^n+t^n}{(z+t)^{\delta(n)}}$. \\
  
  \makeatletter
  \define@key{cylindricalkeys}{angle}{\def\myangle{#1}}
  \define@key{cylindricalkeys}{radius}{\def\myradius{#1}}
  \define@key{cylindricalkeys}{z}{\def\myz{#1}}
  \tikzdeclarecoordinatesystem{cylindrical}%
  {%
  	\setkeys{cylindricalkeys}{#1}%
  	\pgfpointadd{\pgfpointxyz{0}{0}{\myz}}{\pgfpointpolarxy{\myangle}{\myradius}}
  }
  \begin{tikzpicture}[z=0.2pt]
  \node (a) at (2.5,1.8) {$\frac{x^n+y^n}{(x+y)^{\delta(n)}}$};
  \node (b) at (1.1,4.8) {$\frac{z^n+t^n}{(z+t)^{\delta(n)}}$};
  \filldraw [] 
  (2,4.8) circle (3pt);
  \filldraw [] 
  (1.6,1.95) circle (3pt);
  \foreach \num in {60,67,...,490}
  \fill (cylindrical cs:angle=\num,radius=2,z=\num) circle (1.4pt);
  \node (c) at (4.2,2.2)  [right=0.7cm,text width=9cm,font=\footnotesize] 
  {We consider the following trajectory of polynomials 
  	\[ \frac{x^n+y^n}{(x+y)^{\delta(n)}} = \Psi_0(n), \Psi_1(n),\dots, \Psi_{\lfloor{\frac{n}{2}}\rfloor}(n) = \frac{z^n+t^n}{(z+t)^{\delta(n)}}  \]
  	where $\Psi_r(n) = \Psi\left( \begin{array}{cc|r} +xy & -x^2-y^2 & n \\ -zt & +z^2+t^2 & r \end{array} \right)$, which are the coefficients of the following polynomial expansion 
  	\[(zx-ty)^{\lfloor{\frac{n}{2}}\rfloor}(zy-tx)^{\lfloor{\frac{n}{2}}\rfloor}\frac{u^n+v^n}{(u+v)^{\delta(n)}} = \] 
  	\[  \sum_{r=0}^{\lfloor{\frac{n}{2}}\rfloor}
  	\Psi_r(n) (zu-tv)^{\lfloor{\frac{n}{2}}\rfloor -r} (zv-tu)^{\lfloor{\frac{n}{2}}\rfloor -r} ((ux-vy)^{r}(uy-vx))^{r} \] 
  };
  \end{tikzpicture}
  
  For this particular trajectory, we show that for any numbers $x,y,z,t,xz \neq yt, xt\neq yz$, any natural number $n$, the following identity hold 
  \[  \sum_{r=0}^{\lfloor{\frac{n}{2}}\rfloor} \Psi\left( \begin{array}{cc|r} +xy & -x^2-y^2 & n \\ -zt & +z^2+t^2 & r \end{array} \right) = \Psi(xy+zt,-x^2-y^2-z^2-t^2,n).
  \] 		
  This particular trajectory should be surrounded by lots of mathematical questions, from number theory to dynamical systems. One of these, when the endpoints coincident with each other, it gives an orbit. If such orbit exists, we get a solution to many unsolved Diophantine equations. We should ask, if such orbit exist, what are the general arithmetic characteristics of such orbit? For example, what is the characteristics of this orbit, if exist, for $n=5$. And it is natural to ask what is the formula for $n$ such that this orbit exist. Can such orbits exist for infinite values for $n$? Alternatively, this is equivalent to ask whether there is  any nontrivial integer solutions $(x,y,z,t)$ and any natural number $n$ for following Diophantine equation 
  \[     \frac{x^n+y^n}{(x+y)^{\delta(n)}} = \frac{z^n+t^n}{(z+t)^{\delta(n)}}. \]
  
      \subsection*{Fermat Orbit And Open Questions} We present in this paper what we call \say{Fermat orbit} which is the sequence $\eta(r)$ of integers
  \[ 2^{2^n}+1 = \eta(0), \eta(1), \eta(2), \dots, \eta(2^{n-1}) =  2^{2^n}+1,  \]
  which start with Fermat numbers $2^{2^n}+1$ and ended with Fermat numbers $2^{2^n}+1$ which are the coefficients of the following polynomial expansion 
  \[20^{{2^{n-1}}}( x^{2^n}+y^{2^n}) = \sum_{r=0}^{2^{n-1}}
 \eta(r) (-2 x^2 + 5 xy -2 y^2)^{2^{n-1} -r} (-2x^2-5xy-2y^{2})^{r}.\] 
  We show in this paper that $\eta(r) = \Psi\left( \begin{array}{cc|r} -2 & -5 & 2^n \\ -2 & +5 & r \end{array} \right)$.  
  
  \makeatletter
  \define@key{cylindricalkeys}{angle}{\def\myangle{#1}}
  \define@key{cylindricalkeys}{radius}{\def\myradius{#1}}
  \define@key{cylindricalkeys}{z}{\def\myz{#1}}
  \tikzdeclarecoordinatesystem{cylindrical}%
  {%
  	\setkeys{cylindricalkeys}{#1}%
  	\pgfpointadd{\pgfpointxyz{0}{0}{\myz}}{\pgfpointpolarxy{\myangle}{\myradius}}
  }
  \begin{tikzpicture}[z=0.2pt]
  \node (a) at (8.9,3.9) {$F_n=2^{2^n}+1 $};
  \filldraw [] 
  (7.6,4.05) circle (4pt);
  \foreach \num in {707,714,...,999}
  \fill (cylindrical cs:angle=\num,radius=3,z=\num) circle (1.4pt);
  \node (b) at (8,6.5)  [right=0.5cm,text width=9.5cm,font=\footnotesize] 
  {A Fermat prime is a Fermat number $F_n=2^{2^n}+1$ that is prime. Studying Fermat orbit should help understand the arithmetic of Fermat numbers which still are extremely ambiguous.};
  \end{tikzpicture} \\
   There are only five known Fermat primes \cite{Luca}. The five known Fermat primes are $F_0 = 3,   F_1 = 5,   F_2 = 17,   F_3 = 257,   F_4 = 65537$. Only seven Fermat numbers have been completely factored; $F_{5}, F_{6}, F_{7}, F_{8},F_{9},F_{10},F_{11}$. We should ask what is the arithmetic relations bewteen the prime factors of $\eta(1), \eta(2), \dots, \eta(2^{n-1})-1$, and $2^{2^n}+1$.
  Does an efficient algorithms exist based on the terms of Fermat orbit to factor $2^{2^n}+1$? This orbit should have links with the arithmetic of Fermat numbers.

  \section{DEFINITION OF $\Psi(a,b,n)$ and $\Phi(a,b,n)$ POLYNOMIALS}
  In \cite{1}, I first introduced  the definitions for $\Psi$ and $\Phi$ polynomials as following
  \begin{defn}
  	\label{Definition}
  	We define  $\delta(n)=1$ for $n$ odd and $\delta(n)=0$ for $n$ even. For any given variables $a,b$ and for any natural number $n$, we define the sequence
  	\[ \Psi(a,b,n)= \Psi(n),\quad \Phi(a,b,n)= \Phi(n), \]
  	by the following recurrence relations
  	\begin{equation}
  	\begin{aligned}
  	\label{def1}
  	\Psi(0)=2, \Psi(1)=1,\Psi(n+1)=(2a-b)^{\delta(n)}\Psi(n) - a \Psi(n-1),
  	\end{aligned}
  	\end{equation}
  	\begin{equation}
  	\begin{aligned}
  	\label{def2}
  	\Phi(0)=0, \Phi(1)= 1,\Phi(n+1)=(2a-b)^{\delta(n+1)}\Phi(n) - a \Phi(n-1)
  	\end{aligned}
  	\end{equation}
  \end{defn}

	\section{POLYNOMIAL EXPANSIONS IN TERMS OF BINARY QUADRATIC FORMS }
	In \cite{1}, page 447, using the formal derivation, we proved the following identities
	\begin{subequations}
	
	\begin{equation}
	\label{expansion 1}
	x^{n} + y^{n} =\sum_{i=0}^{\left\lfloor \frac{n}{2} \right\rfloor}(-1)^{i} \frac{n}{n-i}  \binom{n-i}{i} (xy)^i (x+y)^{n-2i}
	\end{equation}
	\begin{equation}
	\label{expansion 2}
	\frac{x^n-y^n}{x-y} =\sum_{i=0}^{\left\lfloor \frac{n-1}{2} \right\rfloor} (-1)^{i} \binom{n-i-1}{i} (xy)^i (x+y)^{n-2i-1}
	\end{equation}
\end{subequations}
	where $\lfloor{\frac{m}{2}}\rfloor$ denotes the largest integer $\leq  \frac{m}{2}$.
	Already Kummer and others used  the identities \eqref{expansion 1}, \eqref{expansion 2}, and for more details you can consult with \cite{Bini}, \cite{Boutin}, \cite{Gould} \cite{kummer}, \cite{Mention}, \cite{Ribenboim}, and \cite{Vachette}.
	\begin{thm}
		\label{special AA}
		For any given variables $\alpha, \beta, a , b$, and for any natural number $n$, there exist polynomials in $a,b, \alpha, \beta$ with integer coefficients, that we call
		$ \Psi\left( \begin{array}{cc|r} a & b & n \\ \alpha & \beta & r \end{array} \right) ,   \Phi\left( \begin{array}{cc|r} a & b & n \\ \alpha & \beta & r \end{array} \right) $
		 that depend only on $\alpha, \beta, a , b,n,$ and $r$, and satisfy the following polynomial identities
		\begin{equation}
		\label{special AAA}
		(\beta a - \alpha b)^{\lfloor{\frac{n}{2}}\rfloor} \frac{x^n+y^n}{(x+y)^{\delta(n)}} = \sum_{r=0}^{\lfloor{\frac{n}{2}}\rfloor}
		\Psi\left( \begin{array}{cc|r} a & b & n \\ \alpha & \beta & r \end{array} \right)
		(\alpha x^2 + \beta xy + \alpha y^2)^{\lfloor{\frac{n}{2}}\rfloor -r} (ax^2+bxy+ay^{2})^{r},
		\end{equation}
		\begin{equation}
		\label{special BBB}
		\begin{aligned}
		(\beta a - \alpha b)^{\lfloor{\frac{n-1}{2}}\rfloor}& \frac{x^n-y^n}{(x-y)(x+y)^{\delta(n-1)}} = \\
		&\sum_{r=0}^{\lfloor{\frac{n-1}{2}}\rfloor}
		\Phi\left( \begin{array}{cc|r} a & b & n \\ \alpha & \beta & r \end{array} \right)
		(\alpha x^2 + \beta xy + \alpha y^2)^{\lfloor{\frac{n-1}{2}}\rfloor -r} (ax^2+bxy+ay^{2})^{r}.
				\end{aligned}
		\end{equation}
	\end{thm}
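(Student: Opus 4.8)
The plan is to reduce everything to the two known expansions \eqref{expansion 1} and \eqref{expansion 2}, which already express $x^n+y^n$ and $\frac{x^n-y^n}{x-y}$ as polynomials in the elementary symmetric quantities $s=x+y$ and $p=xy$. The crucial observation is that the two binary quadratic forms on the right-hand sides are themselves linear combinations of $s^2$ and $p$. Writing $Q_1=\alpha x^2+\beta xy+\alpha y^2$ and $Q_2=ax^2+bxy+ay^2$ and using $x^2+y^2=s^2-2p$, I get
\begin{equation*}
\begin{aligned}
Q_1 &= \alpha s^2 + (\beta-2\alpha)\,p, \\
Q_2 &= a s^2 + (b-2a)\,p.
\end{aligned}
\end{equation*}
The determinant of this linear system is $\alpha(b-2a)-a(\beta-2\alpha)=-(\beta a-\alpha b)$, which is nonzero by hypothesis, so I may invert it to obtain
\begin{equation*}
\begin{aligned}
(\beta a-\alpha b)\,s^2 &= (2a-b)\,Q_1 + (\beta-2\alpha)\,Q_2, \\
(\beta a-\alpha b)\,p &= a\,Q_1 - \alpha\,Q_2.
\end{aligned}
\end{equation*}

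Next I substitute these into \eqref{expansion 1}. The key bookkeeping step is the parity of the exponent of $s$. For $n$ even, each term $(xy)^i(x+y)^{n-2i}$ carries the \emph{even} power $s^{n-2i}=(s^2)^{(n-2i)/2}$; for $n$ odd, dividing by $(x+y)^{\delta(n)}=x+y$ leaves the even power $s^{n-2i-1}=(s^2)^{(n-2i-1)/2}$. In both cases $\frac{x^n+y^n}{(x+y)^{\delta(n)}}$ is a polynomial in $p$ and $s^2$ in which every monomial $p^i(s^2)^{\lfloor n/2\rfloor-i}$ is homogeneous of weight $\lfloor n/2\rfloor$. Replacing $p$ and $s^2$ by the linear forms above and multiplying through by $(\beta a-\alpha b)^{\lfloor n/2\rfloor}$ clears all denominators, yielding a homogeneous polynomial of degree $\lfloor n/2\rfloor$ in $Q_1,Q_2$; the coefficient of $Q_1^{\lfloor n/2\rfloor-r}Q_2^{\,r}$ is then taken as the definition of $\Psi\!\left(\begin{array}{cc|r} a & b & n \\ \alpha & \beta & r \end{array}\right)$, which gives \eqref{special AAA}. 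The expansion \eqref{special BBB} for $\Phi$ follows from the identical argument applied to \eqref{expansion 2}, with $\delta(n-1)$ in place of $\delta(n)$ and $\lfloor (n-1)/2\rfloor$ as the homogeneity weight; the same parity check shows the leftover power of $x+y$ is even in each case.

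It remains to verify integrality of the coefficients. The linear forms $a,\ \alpha,\ 2a-b,\ \beta-2\alpha$ have integer coefficients in $a,b,\alpha,\beta$, so the substitution introduces only integers; what must be checked is that the scalars $\frac{n}{n-i}\binom{n-i}{i}$ in \eqref{expansion 1} are integers. This follows from the identity
\begin{equation*}
\frac{n}{n-i}\binom{n-i}{i}=\binom{n-i}{i}+\binom{n-i-1}{i-1},
\end{equation*}
obtained by writing $\frac{n}{n-i}=1+\frac{i}{n-i}$ and simplifying $\frac{i}{n-i}\binom{n-i}{i}=\binom{n-i-1}{i-1}$. The coefficients $\binom{n-i-1}{i}$ in \eqref{expansion 2} are integers outright. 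Hence after expanding and collecting powers, the resulting $\Psi(\cdots)$ and $\Phi(\cdots)$ are polynomials in $a,b,\alpha,\beta$ with integer coefficients depending only on $\alpha,\beta,a,b,n,r$, as claimed.

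I expect the main obstacle to be organizational rather than conceptual: keeping the parity bookkeeping of the factors $\delta(n)$ and $\delta(n-1)$ consistent so that in every case the residual power of $x+y$ is even and the expression is genuinely a polynomial in $s^2$ rather than in $s$. Once that parity analysis is pinned down, the invertibility of the $2\times 2$ system and the integrality identity above make the remainder a routine expansion.
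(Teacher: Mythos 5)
Your proposal is correct and follows essentially the same route as the paper: both rewrite the known expansions of $\frac{x^n+y^n}{(x+y)^{\delta(n)}}$ and $\frac{x^n-y^n}{(x-y)(x+y)^{\delta(n-1)}}$ as weighted-homogeneous polynomials in $xy$ and $(x+y)^2$, and then use exactly the two linear identities $(\beta a-\alpha b)(x+y)^2=(2a-b)Q_1+(\beta-2\alpha)Q_2$ and $(\beta a-\alpha b)xy=aQ_1-\alpha Q_2$ (which you obtain by inverting the $2\times 2$ system and the paper simply states) before clearing denominators with the factor $(\beta a-\alpha b)^{\lfloor n/2\rfloor}$. Your explicit verification that $\frac{n}{n-i}\binom{n-i}{i}$ is an integer is a small addition the paper leaves implicit, but the argument is otherwise the same.
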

	\begin{proof}
		From ~\eqref{expansion 1}, ~\eqref{expansion 2}, we obtain
		\begin{equation}
		\label{expansion 3}
		\frac {x^{n} + y^{n}}{ (x+y)^{\delta(n)}} =\sum_{i=0}^{\left\lfloor \frac{n}{2} \right\rfloor}(-1)^{i} \frac{n}{n-i}  \binom{n-i}{i} (xy)^i (x^2+2xy+y^2)^{\left\lfloor \frac{n}{2} \right\rfloor - i}
		\end{equation}
		and
		\begin{equation}
		\label{expansion 4}
		\frac{x^n-y^n}{(x-y)(x+y)^{\delta(n-1)}} =\sum_{i=0}^{\left\lfloor \frac{n-1}{2} \right\rfloor} (-1)^{i} \binom{n-i-1}{i} (xy)^i (x^2+2xy+y^2)^{\left\lfloor \frac{n-1}{2} \right\rfloor - i}
		\end{equation}
		Now, it is clear that for any variables $a,b, \alpha, \beta$, the following identities are true
		\begin{equation}
		\label{expansion 5}
		\begin{aligned}
		(\beta a-\alpha b)(x^2+2xy+y^2) = (2a-b)(\alpha x^2 + \beta xy + \alpha y^2) + (\beta- 2\alpha)( a x^2 + b xy + a y^2   ), \\
		\end{aligned}
		\end{equation}
		\begin{equation}
		\label{expansion 55}
		\begin{aligned}
		(\beta a-\alpha b) xy = a (\alpha x^2 + \beta xy + \alpha y^2) + (-\alpha)( a x^2 + b xy + a y^2   )
		\end{aligned}
		\end{equation}
		Now multiplying ~\eqref{expansion 3} by $(\beta a-\alpha b)^{\lfloor{\frac{n}{2}}\rfloor}$ and ~\eqref{expansion 4} by $(\beta a-\alpha b)^{\lfloor{\frac{n-1}{2}}\rfloor}$, with using  ~\eqref{expansion 5},  ~\eqref{expansion 55}, we obtain the proof.
	\end{proof}

\section{ALGEBRAIC INDEPENDENCE OF BINARY QUADRATIC FORMS}
For the following special case, applying Jacobian criterion for the special case $f_1, f_2$, \cite{Perron},  which says that polynomials $f_{1}$ and $f_{2}$ are algebraically independent if the determinant of the Jacobian $J$ is non zero where
\[ J =
\left( \begin{array}{cc}
\frac{\partial f_{1}}{\partial x} & \frac{\partial f_{1}}{\partial y} \\
\frac{\partial f_{2}}{\partial x}& \frac{\partial f_{2}}{\partial y}  \end{array} \right)
.\]
So working out with the Jacobian for the polynomials $f_1 = \alpha x^2 + \beta xy + \alpha y^2$ and $f_2= a x^2 + b xy + a y^2$, we get
\[J =
\left( \begin{array}{cc}
\frac{\partial f_{1}}{\partial x} & \frac{\partial f_{1}}{\partial y} \\
\frac{\partial f_{2}}{\partial x}& \frac{\partial f_{2}}{\partial y}  \end{array} \right) =  \left( \begin{array}{cc}
    2 \alpha x + \beta y         &    \beta x + 2 \alpha y          \\
       2 a x + by      &    b x + 2 a y
 \end{array} \right) .   \]
So $ |J| = 2 (\beta a - \alpha b)(y^2 - x^2 )$. Therefore we get the following

\begin{lem}
		\label{SYMMETRIC2}
For any given numbers $a,b, \alpha, \beta$, $\beta a - \alpha b \neq 0, x \neq y$, the binary quadratic forms $\alpha x^2 + \beta xy + \alpha y^2$ and $a x^2 + b xy + a y^2$ are algebraically independent.
\end{lem}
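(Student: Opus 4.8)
The plan is to invoke the Jacobian criterion for algebraic independence exactly as recalled in the paragraph preceding the statement. Working over a field of characteristic zero, that criterion guarantees that the two binary quadratic forms $f_1 = \alpha x^2 + \beta xy + \alpha y^2$ and $f_2 = a x^2 + b xy + a y^2$, viewed as polynomials in the two variables $x,y$, are algebraically independent as soon as their Jacobian determinant $|J|$ is not the zero polynomial. So the entire task reduces to confirming that $|J| \not\equiv 0$ under the stated hypotheses, and the bulk of the work, the explicit evaluation of $|J|$, is already carried out above.

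First I would simply quote the computation performed just before the statement, namely $|J| = 2(\beta a - \alpha b)(y^2 - x^2)$, obtained by differentiating $f_1$ and $f_2$, assembling the $2\times 2$ matrix $J$, and expanding the determinant. If one wanted this step to be self-contained, the only ingredient is the routine expansion $(2\alpha x + \beta y)(bx + 2ay) - (\beta x + 2\alpha y)(2ax + by)$, whose $xy$-terms cancel while the $x^2$- and $y^2$-terms collapse to the displayed product.

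The concluding step is to read off the nonvanishing from this factorization. The parameters $a,b,\alpha,\beta$ are fixed numbers with $\beta a - \alpha b \neq 0$ by hypothesis, so the scalar factor $2(\beta a - \alpha b)$ is nonzero; and $y^2 - x^2$ is a nonzero polynomial in the variables $x,y$. Hence $|J|$ is a nonzero element of the polynomial ring, and the Jacobian criterion immediately delivers the algebraic independence of $f_1$ and $f_2$. Should one prefer a pointwise formulation, any evaluation with $x \neq \pm y$ makes $|J|$ literally nonzero, which is the role of the restriction $x \neq y$ in the statement, though one in fact also needs $x \neq -y$ to prevent the second factor from vanishing.

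The honest assessment is that there is no genuine obstacle once the Jacobian criterion is granted: the determinant is explicit and its factorization transparent. The single point warranting care is the appeal to the criterion itself. One must remain in characteristic zero so that a nonvanishing Jacobian truly certifies independence rather than mere inseparability, and one must interpret the nonvanishing of $|J|$ in the strong sense, as a polynomial identity rather than nonvanishing at a single point, in order to line up cleanly with the hypothesis $\beta a - \alpha b \neq 0$. With these caveats recorded, the lemma follows directly.
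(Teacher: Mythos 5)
Your proposal is correct and follows exactly the paper's own route: the paragraph preceding the lemma invokes the Jacobian criterion, computes $|J| = 2(\beta a - \alpha b)(y^2 - x^2)$, and reads off nonvanishing from the hypothesis $\beta a - \alpha b \neq 0$. Your added remarks---that the criterion requires characteristic zero, that nonvanishing should be understood as a polynomial identity, and that pointwise nonvanishing would actually require $x \neq \pm y$ rather than merely $x \neq y$---are sensible clarifications of points the paper leaves implicit, but they do not change the argument.
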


	\section{COMPUTING $\Psi$ AND $\Phi$ POLYNOMIALS FOR $r=0$ }
	
	\begin{thm}
		\label{BASE}
		For any numbers $a, b, \alpha, \beta,  n $, $2a-b \neq 0$, $\beta a - \alpha b \neq 0$, we have
		\begin{subequations}
			\begin{equation}
			\label{BASE1}
\Psi\left( \begin{array}{cc|r}
a & b & n \\ \alpha & \beta & 0 \end{array} \right) =\Psi(a,b,n) = \frac{(2a-b)^{\lfloor{\frac{n}{2}}\rfloor}} {2^n} \left\{  \left( 1 + \sqrt{ \frac{b+2a}{b-2a}} \right)^n + \left( 1 - \sqrt{ \frac{b+2a}{b-2a}} \right)^n \right\},
			\end{equation}
			\begin{equation}
			\label{BASE2}
		\Phi\left( \begin{array}{cc|r} a & b & n \\ \alpha & \beta & 0 \end{array} \right) =\Phi(a,b,n) = \frac{(2a-b)^{\lfloor{\frac{n-1}{2}}\rfloor}} {2^n \sqrt{\frac{b+2a}{b-2a}}} \left\{  \left( 1 + \sqrt{ \frac{b+2a}{b-2a}} \right)^n - \left( 1 - \sqrt{ \frac{b+2a}{b-2a}} \right)^n  \right\}.
				\end{equation}
		\end{subequations}
	\end{thm}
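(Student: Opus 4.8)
The plan is to eliminate the parity-dependent coefficient $(2a-b)^{\delta(\cdot)}$ in the recurrences~\eqref{def1} and~\eqref{def2} by a single rescaling, turning each into a genuine constant-coefficient linear recurrence that Binet's formula solves at once. Throughout write $c=2a-b$, which is nonzero by hypothesis, and set $s=\sqrt{\tfrac{b+2a}{b-2a}}$ and $r_{\pm}=1\pm s$. A one-line computation gives $r_++r_-=2$ and $r_+r_-=1-s^2=\tfrac{4a}{c}$, so $r_+$ and $r_-$ are exactly the roots of $z^2-2z+\tfrac{4a}{c}=0$; the single algebraic fact about $s$ that the argument needs is $1-\tfrac{4a}{c}=s^2$.

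First I would substitute $\Psi(n)=\dfrac{c^{\lfloor n/2\rfloor}}{2^n}\,u_n$ into~\eqref{def1}. The crux is the bookkeeping of the floor exponents in the two parities. For $n=2m$ we have $\delta(n)=0$ and the exponents $\lfloor(n+1)/2\rfloor,\lfloor n/2\rfloor,\lfloor(n-1)/2\rfloor$ equal $m,m,m-1$; for $n=2m+1$ we have $\delta(n)=1$ and they equal $m+1,m,m$. In both cases, after cancelling the common factor $c^{\lfloor(n+1)/2\rfloor}/2^{n+1}$, the extra power of $c$ supplied by $(2a-b)^{\delta(n)}$ precisely offsets the jump in the floor, and the recurrence collapses to the single parity-free relation $u_{n+1}=2u_n-\tfrac{4a}{c}u_{n-1}$. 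Applying the analogous substitution $\Phi(n)=\dfrac{c^{\lfloor(n-1)/2\rfloor}}{2^n}\,v_n$ to~\eqref{def2} produces the identical recurrence $v_{n+1}=2v_n-\tfrac{4a}{c}v_{n-1}$, the shift by one in the floor index being matched by the shift $\delta(n+1)$ in the coefficient.

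Carrying the initial data through the substitutions gives $u_0=2,\ u_1=2$ and $v_0=0,\ v_1=2$. Since $u_n$ and $v_n$ now obey a constant-coefficient recurrence whose characteristic roots are $r_{\pm}$, the uniqueness of solutions of a second-order linear recurrence together with Binet's formula yields $u_n=r_+^n+r_-^n$ and, solving $v_0=0,\ v_1=2$ with $r_+-r_-=2s$, $v_n=\tfrac{r_+^n-r_-^n}{s}$. Re-inserting these into the two substitutions reproduces verbatim the closed forms~\eqref{BASE1} and~\eqref{BASE2}.

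The step I expect to be the main obstacle is precisely the floor/parity accounting above: it is the only place where the two cases genuinely differ and where a sign or off-by-one slip would break the collapse to constant coefficients, so I would write both parities out explicitly rather than trusting a uniform formula. Two minor points of rigour remain. First, although $s$ carries a square root, $u_n$ is a symmetric function of $r_+,r_-$ and hence a polynomial in $r_++r_-=2$ and $r_+r_-=\tfrac{4a}{c}$, while $v_n=2\,\dfrac{r_+^n-r_-^n}{r_+-r_-}$ is likewise rational in $a,b$; thus both closed forms are genuine rational functions of $a,b$ and the radical is only a formal convenience. Second, the hypothesis $2a-b\neq 0$ is what legitimises dividing by powers of $c$ throughout, while the equal-root locus $b+2a=0$ (where $s=0$) must be read through the limiting value $v_n\to n$.
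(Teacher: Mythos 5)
The theorem asserts two equalities in each of \eqref{BASE1} and \eqref{BASE2}, and your argument proves only the second one. You start directly from the recurrences \eqref{def1} and \eqref{def2}, so what you obtain is a closed form for the sequences $\Psi(a,b,n)$ and $\Phi(a,b,n)$ of Definition \eqref{Definition}; you never touch the first equality $\Psi\left( \begin{array}{cc|r} a & b & n \\ \alpha & \beta & 0 \end{array} \right)=\Psi(a,b,n)$, which identifies the $r=0$ coefficient of the expansion \eqref{special AAA} of Theorem \eqref{special AA} with that recursively defined sequence. That identification is the substantive content of the paper's proof and is where most of its work goes: setting $H(m):=\Psi\left( \begin{array}{cc|r} a & b & m \\ \alpha & \beta & 0 \end{array} \right)$, the paper reduces \eqref{special AAA} modulo $ax^2+bxy+ay^{2}$, uses the congruences $(2a-b)(\alpha x^2+\beta xy+\alpha y^2)\equiv(\beta a-\alpha b)(x+y)^2$ and $(2a-b)xy\equiv a(x+y)^2$ to convert everything to powers of $(x+y)$, cancels $(x+y)^{n+1}$, and concludes that $H$ satisfies $H(0)=2$, $H(1)=1$, $H(n+1)=(2a-b)^{\delta(n)}H(n)-aH(n-1)$, hence $H(n)=\Psi(a,b,n)$. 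Without some argument of this kind the statement as given is not proved; this is a genuine missing piece, not a presentational omission.

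The half you do prove is correct, and by a route genuinely different from the paper's. Your rescalings $\Psi(n)=c^{\lfloor n/2\rfloor}2^{-n}u_n$ and $\Phi(n)=c^{\lfloor (n-1)/2\rfloor}2^{-n}v_n$ do collapse both parity-dependent recurrences to the single constant-coefficient relation $w_{n+1}=2w_n-\tfrac{4a}{c}w_{n-1}$ (I checked both parities), the initial data $u_0=u_1=2$, $v_0=0$, $v_1=2$ are right, the characteristic roots are indeed $1\pm\sqrt{\tfrac{b+2a}{b-2a}}$, and Binet's formula returns exactly the stated right-hand sides. The paper instead gets the closed form by evaluating its congruence at $x_0=\tfrac12(1+s)$, $y_0=\tfrac12(1-s)$, a root of $ax^2+bxy+ay^2$ with $x_0+y_0=1$; your method is self-contained on the recurrence side and makes transparent why the radical is only formal, but it cannot substitute for the congruence argument because it never sees the expansion coefficients. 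One small slip in your closing remarks: with your normalization $v_1=2$, the repeated-root limit at $b+2a=0$ is $v_n\to 2n$, not $n$.
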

	\begin{proof}
		First, we prove that $  \Psi\left( \begin{array}{cc|r} a & b & n \\ \alpha & \beta & 0 \end{array} \right) = \Psi(a,b,n)$. Let $a, b, \alpha, \beta,  n $ be given numbers such that $2a-b \neq 0$, $\beta a - \alpha b \neq 0$. Define
		\[ H(n):= \Psi\left( \begin{array}{cc|r}
		a & b & n \\ \alpha & \beta & 0 \end{array} \right)\]
		From \eqref{special AAA}, the following congruence relations are true for any nonnegative integer $m$,
		\begin{equation}
		\label{H00}
		(\beta a - \alpha b)^{\lfloor{\frac{m}{2}}\rfloor} \frac{x^m+y^m}{(x+y)^{\delta(m)}}\equiv
		H(m) (\alpha x^2 + \beta xy + \alpha y^2)^{\lfloor{\frac{m}{2}}\rfloor}  \pmod{(ax^2+bxy+ay^{2})}
		\end{equation}
		We should notice that $H(0)=2, H(1)=1$. Now, multiply \eqref{H00} by
		$(2 a -  b)^{\lfloor{\frac{m}{2}}\rfloor}$ and noting from \eqref{expansion 5} that
		\[  (2a-b)(\alpha x^2 + \beta xy + \alpha y^2)  \equiv  (\beta a-\alpha b)(x^2+2xy+y^2) \pmod {ax^2 + b xy + a y^2},   \]
		we obtain
		\begin{equation}
		\begin{aligned}
		\label{H000}
	(2 a -  b)^{\lfloor{\frac{m}{2}}\rfloor} & (\beta a - \alpha b)^{\lfloor{\frac{m}{2}}\rfloor} \frac{x^m+y^m}{(x+y)^{\delta(m)}} \equiv \\
		 &(\beta a - \alpha b)^{\lfloor{\frac{m}{2}}\rfloor}
		H(m) ( x^2 + 2 xy +  y^2)^{\lfloor{\frac{m}{2}}\rfloor}  \pmod{(ax^2+bxy+ay^{2})}
		\end{aligned}
		\end{equation}
		As \eqref{H000} true for any $x,y$, $2a-b \neq 0$, $\beta a - \alpha b \neq 0$, we can remove  $(\beta a - \alpha b)^{\lfloor{\frac{m}{2}}\rfloor}$ from both sides to obtain the following congruence relation
		\begin{equation}
		\label{H0001}
		(2 a -  b)^{\lfloor{\frac{m}{2}}\rfloor} \frac{x^m+y^m}{(x+y)^{\delta(m)}}\equiv
		H(m) ( x^2 + 2 xy +  y^2)^{\lfloor{\frac{m}{2}}\rfloor}  \pmod{(ax^2+bxy+ay^{2})}
		\end{equation}
		Multiply both sides by $(x+y)^{\delta(m)}$, and noting that $\delta(m)+2\lfloor{\frac{m}{2}}\rfloor=m$, we obtain
		\begin{equation}
		\label{H0002}
		(2 a -  b)^{\lfloor{\frac{m}{2}}\rfloor} (x^m+y^m) \equiv
		H(m) ( x+y)^{m}  \pmod{(ax^2+bxy+ay^{2})}
		\end{equation}	
		Therefore, for any nonnegative integer $n$, we obtain the following three congruence relations
		\begin{subequations}
			\begin{equation}
			\label{H1a}
			(2 a -  b)^{\lfloor{\frac{n+1}{2}}\rfloor} (x^{n+1}+y^{n+1}) \equiv
			H(n+1) ( x+y)^{n+1}  \pmod{(ax^2+bxy+ay^{2})}
			\end{equation}
			\begin{equation}
			\label{H2a}
			(2 a -  b)^{\lfloor{\frac{n}{2}}\rfloor} (x^{n}+y^{n})  \equiv
			H(n) ( x+y)^{n}  \pmod{(ax^2+bxy+ay^{2})}	
			\end{equation}
			\begin{equation}
			\label{H3a}
			(2 a -  b)^{\lfloor{\frac{n-1}{2}}\rfloor} (x^{n-1}+y^{n-1})\equiv
			H(n-1) ( x+y)^{n-1} \pmod{(ax^2+bxy+ay^{2})}
			\end{equation}
		\end{subequations}
		It is clear that
		\begin{equation}
		\label{H3b}
		(2 a -  b)xy \equiv
		a(x+y)^2 \pmod{(ax^2+bxy+ay^{2})}
		\end{equation}
		Now consider the following polynomial identity
		\begin{equation}
		\label{H4a}
		\begin{aligned}
			&(2 a -  b)^{\lfloor{\frac{n+1}{2}}\rfloor}     (x^{n+1}+y^{n+1}) \\
			 &= (2 a -  b)^{\delta(n)}    (x+y) (2 a -  b)^{\lfloor{\frac{n}{2}}\rfloor}   ( x^{n}+y^{n}) - (2 a -  b)xy(2 a -  b)^{\lfloor{\frac{n-1}{2}}\rfloor}   ( x^{n-1}+y^{n-1})
		\end{aligned}
		\end{equation}
		From \eqref{H1a} \eqref{H2a}, \eqref{H3a}, \eqref{H3b}, and \eqref{H4a} we obtain
		\begin{equation}
		\label{HK}
		H(n+1) ( x+y)^{n+1} \equiv (2 a -  b)^{\delta(n)} H(n) ( x+y)^{n+1} - a H(n-1) ( x+y)^{n+1}
		\pmod{(ax^2+bxy+ay^{2})}
		\end{equation}
		As $2a-b \neq 0$, we can choose $x$ and $y$ such that $ax^2+bxy +ay^2$ and $x+y$ are relatively primes, and hence we can remove $( x+y)^{n+1}$ from \eqref{HK} to get
			\begin{equation}
			\label{HK1}
			H(n+1) \equiv (2 a -  b)^{\delta(n)} H(n)  - a  H(n-1)
			\pmod{(ax^2+bxy+ay^2)}
			\end{equation}
			Therefore the congruence \eqref{HK1}  must turn into identity and hence we obtain
				\begin{equation}
			\label{HK2}
		H(0)=2, H(1)=1,	H(n+1) = (2 a -  b)^{\delta(n)} H(n) - a  H(n-1)
			\end{equation}
		From \eqref{HK2} and \eqref{def1} we get $H(n) = \Psi(a,b,n)$.
		Now put
		\begin{equation}
		\begin{aligned}
 x = x_0 = \frac{1}{2}\Big(1 + \sqrt{\frac{b+2a}{b-2a}} \Big),  \\
 y = y_0 = \frac{1}{2}\Big(1 - \sqrt{\frac{b+2a}{b-2a}} \Big).  \\
	\end{aligned}
	\end{equation}
	Clearly $ax_0^2+bx_0y_0 +ay_0^2=0$, and hence the modulus of the congruence \eqref{H2a} vanish for every natural number $n$ which means that
		\begin{equation}
	\label{H2aF}
	(2 a -  b)^{\lfloor{\frac{n}{2}}\rfloor} (x_0^{n}+y_0^{n})  =
	H(n) ( x_0 + y_0)^{n}.
	\end{equation}
	Then, from \eqref{H2aF}, and as $x_0+y_0=1$, $H(n)=\Psi(a,b,n)$, we get
		\[    \Psi(a,b,n) = (2 a -  b)^{\lfloor{\frac{n}{2}}\rfloor} (x_0^{n}+y_0^{n}).\]
	Therefore, we obtain the full proof of \eqref{BASE1}. By the similar argument we can prove \eqref{BASE2}.
		\end{proof}
	Simple calculations, together with the polynomial identities ~\eqref{expansion 1}, ~\eqref{expansion 2}, and ~\eqref{BASE1}, ~\eqref{BASE2}, we get the following desirable polynomial expansions
	\begin{thm}
		\label{special EE}
		For any numbers $a,b$ and any natural number $n$, the following formulas are true
		\begin{subequations}
			\begin{equation}
			\label{P1}
			\Psi(a,b,n) =\sum_{i=0}^{\left\lfloor \frac{n}{2} \right\rfloor}\frac{n}{n-i} \binom{n-i}{i} (-a)^i (2a-b)^{\left\lfloor \frac{n}{2} \right\rfloor - i},
			\end{equation}
			\begin{equation}
			\label{P2}
			\Phi(a,b,n) =\sum_{i=0}^{\left\lfloor \frac{n-1}{2} \right\rfloor} \binom{n-i-1}{i} (-a)^i (2a-b)^{\left\lfloor \frac{n-1}{2} \right\rfloor - i}
			\end{equation}
		\end{subequations}
	\end{thm}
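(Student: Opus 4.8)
The plan is to combine the closed evaluation produced inside the proof of Theorem \ref{BASE} with the classical expansions \eqref{expansion 1} and \eqref{expansion 2}, specializing them at a single well-chosen pair $(x_0,y_0)$. Recall that \eqref{H2aF} gave the identity $\Psi(a,b,n) = (2a-b)^{\lfloor n/2\rfloor}(x_0^n+y_0^n)$, where $x_0 = \tfrac12\bigl(1+\sqrt{\tfrac{b+2a}{b-2a}}\bigr)$ and $y_0 = \tfrac12\bigl(1-\sqrt{\tfrac{b+2a}{b-2a}}\bigr)$. The only two facts I need about this pair are elementary: first, $x_0+y_0 = 1$; and second, by direct computation,
\[
x_0 y_0 = \tfrac14\Bigl(1 - \tfrac{b+2a}{b-2a}\Bigr) = \frac{-a}{b-2a} = \frac{a}{2a-b},
\]
the last step being where the hypothesis $2a-b\neq 0$ is used so that the quantity is well defined.

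First I would substitute $x=x_0$, $y=y_0$ into \eqref{expansion 1}. Because $x_0+y_0=1$, every factor $(x_0+y_0)^{n-2i}$ collapses to $1$, while each factor $(x_0 y_0)^i$ becomes $\bigl(a/(2a-b)\bigr)^i$; absorbing the sign $(-1)^i$ yields
\[
x_0^n+y_0^n = \sum_{i=0}^{\lfloor n/2\rfloor} \frac{n}{n-i}\binom{n-i}{i}\,(-a)^i (2a-b)^{-i}.
\]
Multiplying through by $(2a-b)^{\lfloor n/2\rfloor}$ and invoking $\Psi(a,b,n) = (2a-b)^{\lfloor n/2\rfloor}(x_0^n+y_0^n)$ turns $(2a-b)^{-i}$ into $(2a-b)^{\lfloor n/2\rfloor-i}$, which is precisely \eqref{P1}.

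The formula \eqref{P2} for $\Phi$ follows the same template, with \eqref{expansion 2} in place of \eqref{expansion 1} and the companion evaluation $\Phi(a,b,n) = (2a-b)^{\lfloor (n-1)/2\rfloor}\,\frac{x_0^n-y_0^n}{x_0-y_0}$, which is the $\Phi$-analogue of \eqref{H2aF} proved by the same argument indicated at the end of Theorem \ref{BASE}. Since $x_0+y_0=1$ again kills all powers of $(x_0+y_0)$ and $x_0 y_0 = a/(2a-b)$ as before, substitution into \eqref{expansion 2} gives $\frac{x_0^n-y_0^n}{x_0-y_0} = \sum_i \binom{n-i-1}{i}(-a)^i(2a-b)^{-i}$, and multiplying by $(2a-b)^{\lfloor (n-1)/2\rfloor}$ produces \eqref{P2}.

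There is no genuine obstacle here: the whole argument is just the specialization of two known identities at the root $(x_0,y_0)$ of $ax^2+bxy+ay^2$. The only points requiring care are the product $x_0 y_0 = a/(2a-b)$ and the bookkeeping of the exponent $\lfloor n/2\rfloor - i$ (respectively $\lfloor (n-1)/2\rfloor - i$) after clearing denominators; the mild side condition $2a-b\neq 0$ needed to make $x_0 y_0$ and the subsequent cancellation meaningful is exactly the hypothesis already in force in Theorem \ref{BASE}.
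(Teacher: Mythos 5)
Your proof is correct and follows essentially the same route the paper intends: it combines the classical expansions \eqref{expansion 1}--\eqref{expansion 2} with the closed evaluation of $\Psi$ and $\Phi$ at the root $(x_0,y_0)$ of $ax^2+bxy+ay^2$, which is exactly the content of \eqref{BASE1}--\eqref{BASE2}. The only cosmetic point is that your computation assumes $2a-b\neq 0$ while the theorem is stated for all $a,b$; since both sides of \eqref{P1} and \eqref{P2} are polynomials in $a,b$, the identity extends automatically.
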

	\section{COMPUTING $\Psi$ AND $\Phi$ POLYNOMIALS}	
		Also, we should notice that there is some symmetry in ~\eqref{special AAA}, ~\eqref{special BBB} and
		\[(\beta a - \alpha b)^{\lfloor{\frac{m}{2}}\rfloor} =
		(-1)^{\lfloor{\frac{m}{2}}\rfloor}
		(b \alpha - a \beta )^{\lfloor{\frac{m}{2}}\rfloor}.\]
		Therefore we obtain the following desirable formulas
		\begin{thm}
			\label{BASE3}
			For any numbers $a, b, \alpha, \beta,  n $,  we have
			\begin{subequations}
				\begin{equation}
				\label{BASE4}
				\Psi\left( \begin{array}{cc|c}
				a & b & n \\ \alpha & \beta &\lfloor{\frac{n}{2}}\rfloor \end{array} \right) = (-1)^{\lfloor{\frac{n}{2}}\rfloor} \Psi(\alpha,\beta,n),
				\end{equation}
				\begin{equation}
				\label{BASE5}
				\Phi\left( \begin{array}{cc|c}
				a & b & n \\ \alpha & \beta &\lfloor{\frac{n-1}{2}}\rfloor \end{array} \right) = (-1)^{\lfloor{\frac{n-1}{2}}\rfloor} \Phi(\alpha,\beta,n).
				\end{equation}
			\end{subequations}
		\end{thm}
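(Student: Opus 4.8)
The plan is to exploit the near-symmetry of the expansion \eqref{special AAA} under interchanging the two parameter pairs $(a,b)$ and $(\alpha,\beta)$: this swaps the two binary quadratic forms and merely flips the sign of the discriminant factor $\beta a - \alpha b$, as recorded in the displayed identity $(\beta a - \alpha b)^{\lfloor{\frac{m}{2}}\rfloor} = (-1)^{\lfloor{\frac{m}{2}}\rfloor}(b\alpha - a\beta)^{\lfloor{\frac{m}{2}}\rfloor}$ preceding the theorem. Writing $M = \lfloor \frac{n}{2}\rfloor$, $f_1 = \alpha x^2 + \beta xy + \alpha y^2$, and $f_2 = ax^2+bxy+ay^2$, the identity \eqref{special AAA} reads
\[ (\beta a - \alpha b)^{M}\,\frac{x^n+y^n}{(x+y)^{\delta(n)}} = \sum_{r=0}^{M} \Psi\!\left( \begin{array}{cc|r} a & b & n \\ \alpha & \beta & r \end{array} \right) f_1^{\,M-r} f_2^{\,r}. \]
First I would re-apply \eqref{special AAA} after the substitution $(a,b,\alpha,\beta)\mapsto(\alpha,\beta,a,b)$. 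This sends $f_1\mapsto f_2$, $f_2\mapsto f_1$, and $\beta a - \alpha b \mapsto b\alpha - a\beta = -(\beta a - \alpha b)$; using $(b\alpha - a\beta)^{M} = (-1)^{M}(\beta a - \alpha b)^{M}$ and multiplying through by $(-1)^M$ gives
\[ (\beta a - \alpha b)^{M}\,\frac{x^n+y^n}{(x+y)^{\delta(n)}} = (-1)^{M}\sum_{r=0}^{M} \Psi\!\left( \begin{array}{cc|r} \alpha & \beta & n \\ a & b & r \end{array} \right) f_2^{\,M-r} f_1^{\,r}. \]

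Next I would reindex the right-hand side by $s = M-r$, so that both expansions are written in the common monomial basis $f_1^{\,M-s}f_2^{\,s}$. By Lemma \ref{SYMMETRIC2} the forms $f_1,f_2$ are algebraically independent whenever $\beta a - \alpha b \neq 0$ and $x\neq y$, hence the monomials $\{f_1^{\,M-s}f_2^{\,s}\}_{s=0}^{M}$ are linearly independent and the coefficients of such an expansion are unique. Matching them yields, for every $s$,
\[ \Psi\!\left( \begin{array}{cc|r} a & b & n \\ \alpha & \beta & s \end{array} \right) = (-1)^{M}\,\Psi\!\left( \begin{array}{cc|r} \alpha & \beta & n \\ a & b & M-s \end{array} \right). \]
Specializing to $s=M$ and invoking Theorem \ref{BASE}, which identifies the $r=0$ coefficient $\Psi\!\left( \begin{array}{cc|r} \alpha & \beta & n \\ a & b & 0 \end{array} \right)$ with $\Psi(\alpha,\beta,n)$, produces exactly \eqref{BASE4}. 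The identity \eqref{BASE5} follows verbatim from the same argument applied to \eqref{special BBB}, with $M$ replaced by $\lfloor\frac{n-1}{2}\rfloor$ and Theorem \ref{BASE} used in the form \eqref{BASE2}.

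The step I expect to be the main obstacle is legitimizing the coefficient comparison across the full parameter range. The uniqueness rests on Lemma \ref{SYMMETRIC2}, which applies only when $\beta a - \alpha b \neq 0$ (and Theorem \ref{BASE} additionally wants $2a-b\neq 0$, $2\alpha-\beta\neq 0$), whereas the statement claims the identity for all $a,b,\alpha,\beta$. Since Theorem \ref{special AA} guarantees that each symbol is a genuine polynomial in $a,b,\alpha,\beta$ with integer coefficients, and $\Psi(\alpha,\beta,n)$ is likewise polynomial, both sides of \eqref{BASE4} are polynomials that agree on the Zariski-dense open set cut out by these non-vanishing conditions; therefore they agree identically. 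I would make this density argument explicit so that the excluded hypersurfaces (such as $\beta a = \alpha b$) are covered, and the same closing remark settles \eqref{BASE5}.
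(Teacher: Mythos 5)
Your proposal is correct and follows essentially the same route as the paper, which derives Theorem \ref{BASE3} precisely from the symmetry of \eqref{special AAA}--\eqref{special BBB} under swapping $(a,b)$ with $(\alpha,\beta)$ together with the sign flip $(\beta a-\alpha b)^{\lfloor\frac{m}{2}\rfloor}=(-1)^{\lfloor\frac{m}{2}\rfloor}(b\alpha-a\beta)^{\lfloor\frac{m}{2}\rfloor}$. The paper only gestures at this argument in a sentence, whereas you supply the details it omits (the reindexing $s=M-r$, uniqueness of coefficients via Lemma \ref{SYMMETRIC2}, and the polynomial-identity extension past the excluded hypersurfaces), all of which are sound.
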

\section{NOTATIONS }
	\begin{notn}
Throughout this paper, and for specific given variables $a, b, \alpha , \beta $, we write
\[ \Psi_r(n) = \Psi\left( \begin{array}{cc|r} a & b & n \\ \alpha & \beta & r \end{array} \right) \quad , \quad  \Phi_r(n) = \Phi\left( \begin{array}{cc|r} a & b & n \\ \alpha & \beta & r \end{array} \right)  \]
\end{notn}

\begin{notn}
	Suppose that $P(x_i) = Q(x_i)$ is any polynomial identity in the variables $x_1, x_2, \dots, x_n$ and suppose $a_1, a_2, \dots, a_n$ are any given parameters. If we differentiate $P(x_i)=Q(x_i)$ with respect to the differential operator defined by $a_1 \frac{{\partial} }{\partial x_1} +  a_2 \frac{{\partial} }{\partial x_2}   +  \dots + a_n \frac{{\partial}}{\partial x_n}$, then we would say that we apply the differential map $\widetilde{\qquad} $ that sends $x_1 \longrightarrow a_1, x_2 \longrightarrow a_2, \dots , x_n \longrightarrow a_n $ to $P(x_i)=Q(x_i)$. Alternatively, we say that we differentiate $P(x_i)=Q(x_i)$ with respect to the differential map $\widetilde{\qquad} $ that sends $x_1 \longrightarrow a_1, x_2 \longrightarrow a_2, \dots , x_n \longrightarrow a_n$ if we apply the operator $ a_1 \frac{{\partial} }{\partial x_1} +  a_2 \frac{{\partial} }{\partial x_2}   +  \dots + a_n \frac{{\partial}}{\partial x_n} $
	to $P(x_i)=Q(x_i)$.
\end{notn}
			\begin{notn}
					Suppose that $f(x_i)$ is any polynomial in the variables $x_1, x_2, \dots, x_n$ and suppose $a_1, a_2, \dots, a_n$ are any given parameters. We define  \[ \overset{r}{\widetilde{f(x_i)}} :=  \Big( a_1 \frac{{\partial} }{\partial x_1}  +  a_2 \frac{{\partial} }{\partial x_2}   +  \dots + a_n \frac{{\partial}}{\partial x_n}  \Big)^{r} f(x_i)  \]
and	$ \overset{0}{\widetilde{f(x_i)}} :=  f(x_i) $. Moreover, we say that $ f(x_i)  \longrightarrow   g(x_i) $ with respect to the differential map$  \widetilde{\qquad}$that sends $x_1 \longrightarrow a_1, x_2 \longrightarrow a_2, \dots , x_n \longrightarrow a_n $ if		
	\[ g(x_i)  =   a_1 \frac{{\partial} }{\partial x_1} f(x_i) +  a_2 \frac{{\partial} }{\partial x_2} f(x_i)  +  \dots + a_n \frac{{\partial}}{\partial x_n}  f(x_i).     \]
\end{notn}
	\begin{notn}
If we list parameters, letters, $x_1, x_2, \dots, x_n$, we should look at every parameter in this paper as a variable. And parameter $x_k$ is considered constant with respect to a particular map $  \widetilde{\qquad}$ only if $ x_k \longrightarrow 0$.
\end{notn}
\begin{exmp}
	\label{example 1}
By differentiating $ax^2+bxy+ay^2$  with respect to the differential map
\[ a \longrightarrow \alpha ,  b \longrightarrow \beta, \alpha \longrightarrow 0 ,  \beta \longrightarrow 0,x \longrightarrow 0, y \longrightarrow 0, \]
we obtain
\begin{equation*}
\begin{aligned}
	 \Big(  \alpha \frac{{\partial} }{\partial a} +  \beta \frac{{\partial} }{\partial b}   +  0  \frac{{\partial}}{\partial \alpha}  + &  0  \frac{{\partial}}{\partial \beta} + 0  \frac{{\partial}}{\partial x}  +  0  \frac{{\partial}}{\partial y} \Big)  (ax^2+bxy+ay^2) \\ &=
	 \Big(  \alpha \frac{{\partial} }{\partial a} +  \beta \frac{{\partial} }{\partial b} \Big) (ax^2+bxy+ay^2) \\
	 &= \alpha \frac{{\partial} }{\partial a}(ax^2+bxy+ay^2) +  \beta \frac{{\partial} }{\partial b} (ax^2+bxy+ay^2) \\
	 &= \alpha x^2 + \beta xy + \alpha y^2  	
\end{aligned}
\end{equation*}
Therefore we say that $a x^2 + b xy + a y^2   \longrightarrow \alpha x^2 + \beta xy + \alpha y^2 $
with respect to the differential map $a \longrightarrow \alpha ,  b \longrightarrow \beta, \alpha \longrightarrow 0 ,  \beta \longrightarrow 0,x \longrightarrow 0, y \longrightarrow 0$. Also, we can show that by differentiating $(\beta a - \alpha b)^{\lfloor{\frac{n}{2}}\rfloor}$ with respect to the differential map \[ a \longrightarrow \alpha ,  b \longrightarrow \beta, \alpha \longrightarrow 0 ,  \beta \longrightarrow 0,x \longrightarrow 0, y \longrightarrow 0,\] we get
\begin{equation*}
\begin{aligned}
\Big(  \alpha \frac{{\partial} }{\partial a} +  \beta \frac{{\partial} }{\partial b}   + 0  \frac{{\partial}}{\partial \alpha}  +  0  \frac{{\partial}}{\partial \beta} + & 0  \frac{{\partial}}{\partial x}  +  0  \frac{{\partial}}{\partial y} \Big)  (\beta a - \alpha b)^{\lfloor{\frac{n}{2}}\rfloor} \\ &=
\Big(  \alpha \frac{{\partial} }{\partial a} +  \beta \frac{{\partial} }{\partial b} \Big) (\beta a - \alpha b)^{\lfloor{\frac{n}{2}}\rfloor} \\
&=  \lfloor{\frac{n}{2}}\rfloor  (\beta a - \alpha b)^{\lfloor{\frac{n}{2}}\rfloor -1}
\Big( \alpha \frac{{\partial} }{\partial a} +  \beta \frac{{\partial} }{\partial b}        \Big)   (\beta a - \alpha b)                              \\
&=  \lfloor{\frac{n}{2}}\rfloor  (\beta a - \alpha b)^{\lfloor{\frac{n}{2}}\rfloor -1}
\Big( \alpha \frac{{\partial} }{\partial a}(\beta a - \alpha b)      +  \beta \frac{{\partial} }{\partial b} (\beta a - \alpha b)            \Big)    \\
&=  \lfloor{\frac{n}{2}}\rfloor  (\beta a - \alpha b)^{\lfloor{\frac{n}{2}}\rfloor -1}
\Big( \alpha \beta \frac{{\partial} }{\partial a} a    -  \beta \alpha \frac{{\partial} }{\partial b}  b \Big) =     0   \\
\end{aligned}
\end{equation*}
Therefore we say that $(\beta a - \alpha b)^{\lfloor{\frac{n}{2}}\rfloor} \longrightarrow 0 $
with respect to the differential map \[a \longrightarrow \alpha ,  b \longrightarrow \beta, \alpha \longrightarrow 0 ,  \beta \longrightarrow 0,x \longrightarrow 0, y \longrightarrow 0.\]
\end{exmp}

\begin{exmp}
	With respect to the following particular differential map
	\[ a \longrightarrow -a-b ,  b \longrightarrow -b-4a, \alpha \longrightarrow -\alpha - \beta ,  \beta \longrightarrow -\beta -4 \alpha,x \longrightarrow y, y \longrightarrow x, \]
we get
\begin{equation*}
\begin{aligned}
\Big(  (-a-b) \frac{{\partial} }{\partial a} + &(-b-4a ) \frac{{\partial} }{\partial b}   + (-\alpha - \beta) \frac{{\partial}}{\partial \alpha}  + (-\beta -4 \alpha) \frac{{\partial}}{\partial \beta} + y  \frac{{\partial}}{\partial x}  +  x \frac{{\partial}}{\partial y} \Big) (ax^2+bxy+ay^2) \\
&= \Big( (-a-b) \frac{{\partial} }{\partial a} + (-b-4a ) \frac{{\partial} }{\partial b}   +  y  \frac{{\partial}}{\partial x}  +  x \frac{{\partial}}{\partial y} \Big) (ax^2+bxy+ay^2) \\
&=  (-a-b) (x^2+y^2) + (-b-4a ) (xy)  +  y  (2ax+by) +  x (bx+2ay)  \\
&= - (ax^2+bxy+ay^2)  \\
\end{aligned}
\end{equation*}
Therefore, for this particular differential map, we get
\[ ax^2+bxy+ay^2 \longrightarrow -(ax^2+bxy+ay^2) \]
Also, with respect to this particular differential map, we can easily check that
\begin{align*}
 \alpha x^2+ \beta xy+  \alpha y^2    &  \longrightarrow  - \alpha x^2 - \beta xy -  \alpha y^2 \\
(\beta a - \alpha b)      & \longrightarrow -2(\beta a - \alpha b)                                 \\
(x+y)^{\delta(n)}           & \longrightarrow
    \delta(n)  (x+y)^{\delta(n)}                     \\
x^n+y^n & \longrightarrow n (x^{n-1}y +y^{n-1}x)                                  \\
n (x^{n-1}y +y^{n-1}x)  & \longrightarrow n (x^{n} +y^{n}) + n(n-1)(x^{n-2}y^2+y^{n-2}x^2)                           \\
\end{align*}
  This particular differential map is useful for studying the expansions of the polynomials of the form   \[            x^n y^m + x^m y^n                      \]
\end{exmp}
\section{DIFFERENTIAL APPROACHES FOR COMPUTING $\Psi$ AND $\Phi$ POLYNOMIALS}
	For $\beta a - \alpha b \neq 0$, define
	 \[  \Psi_r(n) := \Psi\left( \begin{array}{cc|r} a & b & n \\ \alpha & \beta & r \end{array} \right)  \quad  \mbox{and} \quad \Phi_r(n) := \Phi\left( \begin{array}{cc|r} a & b & n \\ \alpha & \beta & r \end{array} \right) \]
To compute $\Psi_r(n), \Phi_r(n)$, consider the differential map $\widetilde{\qquad}$ that sends
\[ a \longrightarrow \alpha ,  b \longrightarrow \beta, \alpha \longrightarrow 0 ,  \beta \longrightarrow 0,x \longrightarrow 0, y \longrightarrow 0. \]
We differentiate ~\eqref{special AAA} with respect to this particular differential map
and noting from Example \eqref{example 1} the following
  \begin{align}
 \begin{aligned}
(\beta a - \alpha b)^{\lfloor{\frac{n}{2}}\rfloor}  & \longrightarrow 0                        &   , &
 &   \frac{x^n+y^n}{(x+y)^{\delta(n)}} & \longrightarrow 0.
   \end{aligned}
 \end{align}
Therefore
\begin{equation}
 (\alpha \frac{{\partial} }{\partial a} +  \beta \frac{{\partial} }{\partial b}) (\beta a - \alpha b)^{\lfloor{\frac{n}{2}}\rfloor} \frac{x^n+y^n}{(x+y)^{\delta(n)}} =0.
\end{equation}

Now acting  $  (\alpha \frac{{\partial} }{\partial a} +  \beta \frac{{\partial} }{\partial b}) $ on  ~\eqref{special AAA}, we obtain
\begin{equation}
\label{equation}
\begin{aligned}
0=\Big(\alpha \frac{{\partial} }{\partial a} & +  \beta \frac{{\partial} }{\partial b} \Big) \sum_{r=0}^{\lfloor{\frac{n}{2}}\rfloor}
\Psi_r(n) (\alpha x^2 + \beta xy + \alpha y^2)^{\lfloor{\frac{n}{2}}\rfloor -r} (ax^2+bxy+ay^{2})^{r}  \\
&=  \sum_{r=0}^{\lfloor{\frac{n}{2}}\rfloor}
(\alpha x^2 + \beta xy + \alpha y^2)^{\lfloor{\frac{n}{2}}\rfloor -r} (ax^2+bxy+ay^{2})^{r}\Big(\alpha \frac{{\partial} }{\partial a} +  \beta \frac{{\partial} }{\partial b} \Big)\Psi_r(n)\\
&+   \sum_{r=0}^{\lfloor{\frac{n}{2}}\rfloor}
\Psi_r(n)
(\alpha x^2 + \beta xy + \alpha y^2)^{\lfloor{\frac{n}{2}}\rfloor -r}\Big(\alpha \frac{{\partial} }{\partial a} +  \beta \frac{{\partial} }{\partial b} \Big) (ax^2+bxy+ay^{2})^{r} \\
&+  \sum_{r=0}^{\lfloor{\frac{n}{2}}\rfloor}
\Psi_r(n)
(ax^2+bxy+ay^{2})^{r}\Big(\alpha \frac{{\partial} }{\partial a} +  \beta \frac{{\partial} }{\partial b} \Big) (\alpha x^2 + \beta xy + \alpha y^2)^{\lfloor{\frac{n}{2}}\rfloor -r}
\end{aligned}
\end{equation}
Consequently, from \eqref{equation}, we obtain the following desirable polynomial identities
\begin{equation}
\label{special AAAA}
0 = \sum_{r=0}^{\lfloor{\frac{n}{2}}\rfloor} \Big(\widetilde{ \Psi_r(n)} + (r+1)\Psi_{r+1}(n) \Big)  (\alpha x^2 + \beta xy + \alpha y^2)^{\lfloor{\frac{n}{2}}\rfloor -r} (ax^2+bxy+ay^{2})^{r}.
\end{equation}
where, according to our notations, $ \widetilde{ \Psi_r(n)} :=
(\alpha \frac{{\partial} }{\partial a} +  \beta \frac{{\partial} }{\partial b})\Psi_r(n)$. Similarly, we differentiate ~\eqref{special BBB} with same particular differential map. We should have no difficulty to obtain the following identity
\begin{equation}
\label{special BBBB}
0 = \sum_{r=0}^{\lfloor{\frac{n-1}{2}}\rfloor} \Big(\widetilde{ \Phi_r(n)} + (r+1)\Phi_{r+1}(n) \Big)  (\alpha x^2 + \beta xy + \alpha y^2)^{\lfloor{\frac{n-1}{2}}\rfloor -r} (ax^2+bxy+ay^{2})^{r}
\end{equation}
where, according to our notations, $ \widetilde{ \Phi_r(n)} :=
(\alpha \frac{{\partial} }{\partial a} +  \beta \frac{{\partial} }{\partial b})\Phi_r(n)$. As $ \beta a - \alpha b \neq 0 $, from Lemma \eqref{SYMMETRIC2}, the polynomials $(\alpha x^2 + \beta xy + \alpha y^2)$ and $(a x^2 + b xy + a y^2)$ are algebraic independent which means that all of the coefficients of ~\eqref{special AAAA}, and ~\eqref{special BBBB} must vanish. This means that with respect to the differential map $ a \longrightarrow \alpha ,  b \longrightarrow \beta, \alpha \longrightarrow 0 ,  \beta \longrightarrow 0$, we obtain
\begin{align}
\begin{aligned}
\widetilde{ \Psi_r(n)} + (r+1)\Psi_{r+1}(n) &= 0 , \quad \forall_{r=0}^{\lfloor{\frac{n}{2}}\rfloor - 1} r \\
\widetilde{ \Phi_r(n)} + (r+1)\Phi_{r+1}(n) &= 0 , \quad \forall_{r=0}^{\lfloor{\frac{n-1}{2}}\rfloor - 1} r
\end{aligned}
\end{align}
Similarly, with respect to the differential map $ \alpha \longrightarrow a ,  \beta \longrightarrow b, a \longrightarrow 0 ,  b \longrightarrow 0 $, we get
\begin{align}
\begin{aligned}
	\widetilde{ \Psi_r(n)} + (\lfloor{\frac{n}{2}}\rfloor - r + 1 )\Psi_{r-1}(n) &= 0 , \quad \forall_{r=1}^{\lfloor{\frac{n}{2}}\rfloor } r \\
		\widetilde{ \Phi_r(n)} +(\lfloor{\frac{n-1}{2}}\rfloor - r +1 )\Phi_{r-1}(n) &= 0 , \quad \forall_{r=1}^{\lfloor{\frac{n-1}{2}}\rfloor } r
\end{aligned}
\end{align}
This immediately gives the following desirable theorems
\begin{thm}
	\label{DDD}
		For any natural number $n$, for any parameters $a,b,\alpha,\beta, \beta a - \alpha b \neq 0$, let
		\[  \Psi_r(n) := \Psi\left( \begin{array}{cc|r} a & b & n \\ \alpha & \beta & r \end{array} \right)  \quad  \mbox{and} \quad \Phi_r(n) := \Phi\left( \begin{array}{cc|r} a & b & n \\ \alpha & \beta & r \end{array} \right). \] Consider the differential map $\widetilde{\qquad}$ that sends \[a \longrightarrow \alpha ,  b \longrightarrow \beta, \alpha \longrightarrow 0 ,  \beta \longrightarrow 0.\]
		Then with respect to this differential map we get
\begin{align}
\begin{aligned}
\Psi_{r}(n) = \frac{-1}{r} \widetilde{ \Psi_{r-1}(n)}  \qquad \forall_{r=1}^{\lfloor{\frac{n}{2}}\rfloor} r  \qquad ,  \qquad
\Phi_{r}(n) = \frac{-1}{r} \widetilde{ \Phi_{r-1}(n)}   \quad \forall_{r=1}^{\lfloor{\frac{n-1}{2}}\rfloor} r \\
\end{aligned}
\end{align}
\end{thm}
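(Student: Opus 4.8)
The plan is to derive the stated recurrences by differentiating the two expansions \eqref{special AAA} and \eqref{special BBB} with respect to the differential operator $\alpha\frac{\partial}{\partial a}+\beta\frac{\partial}{\partial b}$ and then invoking the algebraic independence of the two binary quadratic forms. I would treat $\Psi$ first, the argument for $\Phi$ being word-for-word the same with $\lfloor\frac{n}{2}\rfloor$ replaced by $\lfloor\frac{n-1}{2}\rfloor$.

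First I would apply $\alpha\frac{\partial}{\partial a}+\beta\frac{\partial}{\partial b}$ to both sides of \eqref{special AAA}. On the left-hand side, Example~\ref{example 1} shows that $(\beta a-\alpha b)^{\lfloor\frac{n}{2}\rfloor}\longrightarrow 0$ under this map, and since $\frac{x^n+y^n}{(x+y)^{\delta(n)}}$ is free of $a$ and $b$ it is annihilated as well; hence the entire left-hand side maps to $0$. On the right-hand side I would expand each summand by the Leibniz rule. The factor $(\alpha x^2+\beta xy+\alpha y^2)^{\lfloor\frac{n}{2}\rfloor-r}$ does not involve $a,b$ and so contributes nothing, the coefficient $\Psi_r(n)$ contributes $\widetilde{\Psi_r(n)}$, and the factor $(ax^2+bxy+ay^2)^r$ contributes $r(ax^2+bxy+ay^2)^{r-1}(\alpha x^2+\beta xy+\alpha y^2)$, using the computation $ax^2+bxy+ay^2\longrightarrow\alpha x^2+\beta xy+\alpha y^2$ from Example~\ref{example 1}.

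Collecting the two surviving contributions by the common monomial $(\alpha x^2+\beta xy+\alpha y^2)^{\lfloor\frac{n}{2}\rfloor-r}(ax^2+bxy+ay^2)^r$ --- the second contribution lands at index $r$ only after shifting $r-1\mapsto r$, which is what produces the factor $(r+1)$ on $\Psi_{r+1}(n)$ --- yields precisely identity \eqref{special AAAA}. Since $\beta a-\alpha b\neq 0$, Lemma~\ref{SYMMETRIC2} guarantees that $\alpha x^2+\beta xy+\alpha y^2$ and $ax^2+bxy+ay^2$ are algebraically independent, so a polynomial in these two forms can vanish identically only when every coefficient is zero. Hence $\widetilde{\Psi_r(n)}+(r+1)\Psi_{r+1}(n)=0$ for $r=0,\dots,\lfloor\frac{n}{2}\rfloor-1$, and re-indexing $r\mapsto r-1$ gives $\Psi_r(n)=-\frac{1}{r}\widetilde{\Psi_{r-1}(n)}$ for $1\le r\le\lfloor\frac{n}{2}\rfloor$. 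Running the identical computation on \eqref{special BBB} delivers the companion recurrence for $\Phi$.

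The step I expect to demand the most care is the passage from the vanishing of the whole sum to the vanishing of each individual coefficient. This is exactly where the nondegeneracy hypothesis $\beta a-\alpha b\neq 0$ is used: without it the Jacobian $|J|=2(\beta a-\alpha b)(y^2-x^2)$ could vanish and the two forms would fail to be algebraically independent, so one could no longer read off the coefficient-wise relations. A secondary point worth checking is the bookkeeping of the index shift in the Leibniz expansion, since an off-by-one there would corrupt the factor $r+1$ and hence the final $-1/r$.
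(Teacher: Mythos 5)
Your proposal is correct and follows essentially the same route as the paper: differentiate \eqref{special AAA} and \eqref{special BBB} by $\alpha\frac{\partial}{\partial a}+\beta\frac{\partial}{\partial b}$, observe that the left-hand side is annihilated, collect the Leibniz terms into the identity \eqref{special AAAA}, and invoke Lemma~\ref{SYMMETRIC2} to force each coefficient $\widetilde{\Psi_r(n)}+(r+1)\Psi_{r+1}(n)$ to vanish before re-indexing. The index bookkeeping and the role of $\beta a-\alpha b\neq 0$ are handled exactly as in the paper.
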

\begin{thm}
	\label{DDD Pluse}
	For any natural number $n$, for any parameters $a,b,\alpha,\beta, \beta a - \alpha b \neq 0$, let
	\[  \Psi_r(n) := \Psi\left( \begin{array}{cc|r} a & b & n \\ \alpha & \beta & r \end{array} \right)  \quad  \mbox{and} \quad \Phi_r(n) := \Phi\left( \begin{array}{cc|r} a & b & n \\ \alpha & \beta & r \end{array} \right). \]  Consider the differential map $\widetilde{\qquad}$ that sends \[\alpha \longrightarrow a ,  \beta \longrightarrow b, a \longrightarrow 0 ,  b \longrightarrow 0.\]
	Then with respect to this differential map we get
	\begin{align}
	\begin{aligned}
	\Psi_{r}(n) = \frac{-1}{\lfloor{\frac{n}{2}}\rfloor - r} \widetilde{ \Psi_{r+1}(n)}  \qquad \forall_{r=0}^{\lfloor{\frac{n}{2}}\rfloor -1} r  \qquad ,  \qquad
	\Phi_{r}(n) = \frac{-1}{\lfloor{\frac{n}{2}}\rfloor - r} \widetilde{ \Phi_{r+1}(n)}  \quad \forall_{r=0}^{\lfloor{\frac{n-1}{2}}\rfloor -1} r \\
	\end{aligned}
	\end{align}
\end{thm}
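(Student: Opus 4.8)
The plan is to re-run, with the two quadratic forms swapped, exactly the argument that produced Theorem~\ref{DDD}. Write $M:=\lfloor\frac{n}{2}\rfloor$, $Q_1:=\alpha x^2+\beta xy+\alpha y^2$ and $Q_2:=ax^2+bxy+ay^2$, and let $\widetilde{\qquad}$ now denote the \emph{dual} differential map $\alpha\longrightarrow a$, $\beta\longrightarrow b$, $a\longrightarrow 0$, $b\longrightarrow 0$, i.e.\ the operator $a\frac{\partial}{\partial\alpha}+b\frac{\partial}{\partial\beta}$. First I would record how each ingredient of the master identity \eqref{special AAA} transforms, exactly mirroring the computation of Example~\ref{example 1}: applying the operator to $Q_1$ gives $a(x^2+y^2)+b\,xy=Q_2$; applying it to $Q_2$ gives $0$ since no $\alpha,\beta$ occur; and applying it to $\beta a-\alpha b$ gives $a(-b)+b(a)=0$, so $(\beta a-\alpha b)^{M}\longrightarrow 0$, while the quotient $\frac{x^n+y^n}{(x+y)^{\delta(n)}}$ is annihilated as well. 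Hence the whole left-hand side of \eqref{special AAA} maps to $0$ under $\widetilde{\qquad}$.

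Next I would differentiate the right-hand side of \eqref{special AAA} term by term. Using $\widetilde{Q_1}=Q_2$ and $\widetilde{Q_2}=0$, the factor $Q_2^{\,r}$ is killed, while $Q_1^{\,M-r}$ contributes $(M-r)\,Q_1^{\,M-r-1}Q_2$, so that
\[
0=\sum_{r=0}^{M}\widetilde{\Psi_r(n)}\,Q_1^{\,M-r}Q_2^{\,r}+\sum_{r=0}^{M}(M-r)\,\Psi_r(n)\,Q_1^{\,M-r-1}Q_2^{\,r+1}.
\]
Shifting the index of the second sum by one (its $r=M$ summand vanishes since $M-r=0$) and collecting the coefficient of each monomial $Q_1^{\,M-r}Q_2^{\,r}$ yields
\[
0=\sum_{r=0}^{M}\Big(\widetilde{\Psi_r(n)}+(M-r+1)\,\Psi_{r-1}(n)\Big)Q_1^{\,M-r}Q_2^{\,r}.
\]
By Lemma~\ref{SYMMETRIC2} (valid since $\beta a-\alpha b\neq 0$) the forms $Q_1,Q_2$ are algebraically independent, so every coefficient vanishes, giving $\widetilde{\Psi_r(n)}+(M-r+1)\Psi_{r-1}(n)=0$ for $1\le r\le M$. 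Reindexing $r\mapsto r+1$ and solving for $\Psi_r(n)$ produces $\Psi_r(n)=\frac{-1}{M-r}\widetilde{\Psi_{r+1}(n)}$ for $0\le r\le M-1$, which is the asserted formula; the identical procedure applied to \eqref{special BBB} (with $\lfloor\frac{n-1}{2}\rfloor$ in place of $M$) delivers the $\Phi$ statement.

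The step I expect to require the most care is the bookkeeping of the index shift together with the boundary terms. I must check that after differentiating $Q_1^{\,M-r}$ and reindexing, the coefficient attached to each monomial combines correctly; that the top summand $r=M$ of the differentiated sum is genuinely annihilated by the factor $M-r$ (so it contributes no spurious $\Psi_{M}$ term); and that the $r=0$ monomial, which receives only $\widetilde{\Psi_0(n)}$, is automatically consistent because $\Psi_0(n)=\Psi(a,b,n)$ carries no $\alpha,\beta$ dependence and is therefore annihilated by $a\frac{\partial}{\partial\alpha}+b\frac{\partial}{\partial\beta}$. Everything else is a faithful transcription of the $r=0$ argument with $(a,b)\leftrightarrow(\alpha,\beta)$, so no genuinely new idea is needed beyond verifying the dual transformation rules $Q_1\longrightarrow Q_2$, $Q_2\longrightarrow 0$, and $(\beta a-\alpha b)\longrightarrow 0$.
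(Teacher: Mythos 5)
Your proposal is correct and follows essentially the same route as the paper, which obtains Theorem~\ref{DDD Pluse} by applying the dual differential map $a\frac{\partial}{\partial\alpha}+b\frac{\partial}{\partial\beta}$ to \eqref{special AAA} and \eqref{special BBB}, invoking Lemma~\ref{SYMMETRIC2}, and reindexing; your explicit verification of the transformation rules $Q_1\longrightarrow Q_2$, $Q_2\longrightarrow 0$, $(\beta a-\alpha b)\longrightarrow 0$ and of the boundary terms merely fills in the paper's one-line ``similarly.'' Note that your (correct) derivation yields the denominator $\lfloor\frac{n-1}{2}\rfloor-r$ in the $\Phi$ formula, so the $\lfloor\frac{n}{2}\rfloor-r$ printed in the theorem statement appears to be a typo that your argument silently corrects.
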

\subsection*{FIRST OBSERVATION ABOUT THEOREM \eqref{DDD}}
Theorem \eqref{DDD} has lots of applications. We should realize that Theorem \eqref{DDD} is also true for any differentiable map $\widetilde{\qquad}$ as long as $\widetilde{a} = \alpha, \quad \widetilde{b} = \beta, \quad \widetilde{\alpha} = 0, \quad \widetilde{\beta} = 0, \quad   \beta a - \alpha b \neq 0$. Therefore, we should easily get the following desirable generalization
\begin{cor}
	\label{VVV}
	For any natural number $n$, for any parameters $a,b,\alpha,\beta, \theta, \beta a - \alpha b \neq 0$, let
\[  \Psi_r(n) := \Psi\left( \begin{array}{cc|r} a-\alpha \theta & b - \beta \theta & n \\ \alpha & \beta & r \end{array} \right)  \quad  \mbox{and} \quad \Phi_r(n) := \Phi\left( \begin{array}{cc|r} a-\alpha \theta & b - \beta \theta & n \\ \alpha & \beta & r \end{array} \right). \] Consider the differential map $\widetilde{\qquad}$ that sends $ a \longrightarrow 0 ,  b \longrightarrow 0, \alpha \longrightarrow 0 ,  \beta  \longrightarrow 0, \theta \longrightarrow -1$. Then with respect to this particular differential map we get
\begin{align}
\begin{aligned}
\Psi_{r}(n) &= \frac{-1}{r} \widetilde{ \Psi_{r-1}(n)}   , \quad \forall_{r=1}^{\lfloor{\frac{n}{2}}\rfloor} r   \\
\Phi_{r}(n) &= \frac{-1}{r} \widetilde{ \Phi_{r-1}(n)}   , \quad \forall_{r=1}^{\lfloor{\frac{n-1}{2}}\rfloor} r.
\end{aligned}
\end{align}
\end{cor}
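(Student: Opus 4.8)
The plan is to reduce the statement to the generalization of Theorem \ref{DDD} recorded just above in the First Observation, namely that the recurrences of Theorem \ref{DDD} hold for \emph{any} differential map $\widetilde{\qquad}$ satisfying $\widetilde{a}=\alpha$, $\widetilde{b}=\beta$, $\widetilde{\alpha}=0$, $\widetilde{\beta}=0$ together with $\beta a-\alpha b\neq 0$. First I would introduce the shorthand $A:=a-\alpha\theta$ and $B:=b-\beta\theta$, so that by definition
\[ \Psi_r(n)=\Psi\left( \begin{array}{cc|r} A & B & n \\ \alpha & \beta & r \end{array} \right), \qquad \Phi_r(n)=\Phi\left( \begin{array}{cc|r} A & B & n \\ \alpha & \beta & r \end{array} \right), \]
each of which is a polynomial in the four quantities $A,B,\alpha,\beta$. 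A one-line computation records the crucial nondegeneracy invariance $\beta A-\alpha B=\beta(a-\alpha\theta)-\alpha(b-\beta\theta)=\beta a-\alpha b\neq 0$, so the hypothesis carried by the corollary is exactly the hypothesis needed downstream.

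Next I would identify the differential operator attached to the prescribed map. Since $\Psi_r(n)$ and $\Phi_r(n)$ are polynomials in $a,b,\alpha,\beta,\theta$ alone, and the map sends $a,b,\alpha,\beta\longrightarrow 0$ with only $\theta\longrightarrow -1$ nonzero, the associated operator is simply $\widetilde{\qquad}=-\dfrac{\partial}{\partial\theta}$. Applying it to any polynomial $P(A,B,\alpha,\beta)$ and using $\partial_\theta A=-\alpha$, $\partial_\theta B=-\beta$, while $\alpha$ and $\beta$ are independent of $\theta$, the chain rule gives
\[ -\frac{\partial}{\partial\theta}\,P(A,B,\alpha,\beta) = \alpha\,\frac{\partial P}{\partial A} + \beta\,\frac{\partial P}{\partial B}. \]
Thus, acting on the entries $A,B,\alpha,\beta$, the map $-\partial_\theta$ behaves exactly as the map sending $A\longrightarrow\alpha$, $B\longrightarrow\beta$, $\alpha\longrightarrow 0$, $\beta\longrightarrow 0$; that is, $\widetilde{A}=\alpha$, $\widetilde{B}=\beta$, $\widetilde{\alpha}=0$, $\widetilde{\beta}=0$.

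Finally I would invoke the First Observation with base variables $A,B$ in place of $a,b$. Since the operator $-\partial_\theta$ meets the required conditions $\widetilde{A}=\alpha$, $\widetilde{B}=\beta$, $\widetilde{\alpha}=\widetilde{\beta}=0$ and the nondegeneracy $\beta A-\alpha B\neq 0$ holds, the generalized Theorem \ref{DDD} yields $\Psi_r(n)=-\frac{1}{r}\,\widetilde{\Psi_{r-1}(n)}$ for $1\le r\le\lfloor\frac{n}{2}\rfloor$ and $\Phi_r(n)=-\frac{1}{r}\,\widetilde{\Phi_{r-1}(n)}$ for $1\le r\le\lfloor\frac{n-1}{2}\rfloor$, which is the assertion. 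The only point requiring care—and the one I would expect to be the single genuine step—is the chain-rule identification of $-\partial_\theta$ with $\alpha\partial_A+\beta\partial_B$ on polynomials in $A,B,\alpha,\beta$; everything else is bookkeeping, since the invariance $\beta A-\alpha B=\beta a-\alpha b$ transports the hypothesis verbatim.
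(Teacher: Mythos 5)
Your proposal is correct and follows essentially the same route as the paper: the paper justifies Corollary \ref{VVV} precisely by the First Observation that Theorem \ref{DDD} holds for any differential map with $\widetilde{a}=\alpha$, $\widetilde{b}=\beta$, $\widetilde{\alpha}=\widetilde{\beta}=0$ and $\beta a-\alpha b\neq 0$, applied to the shifted parameters $a-\alpha\theta$, $b-\beta\theta$. Your chain-rule identification of $-\partial/\partial\theta$ with $\alpha\,\partial_A+\beta\,\partial_B$ and the check that $\beta A-\alpha B=\beta a-\alpha b$ simply make explicit the details the paper leaves implicit.
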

\subsection*{SECOND OBSERVATION ABOUT THEOREM \eqref{DDD}}
In terms of the classical notations of the partial differentiation we get
\begin{thm}
		\label{TT2}
	The polynomials $\Psi\left(\begin{array}{cc|c}
	a & b & n \\ \alpha & \beta & r \end{array}\right)$ and $\Phi\left(\begin{array}{cc|c}
	a & b & n \\ \alpha & \beta & r \end{array}\right)$   satisfy
	\begin{align}
	\begin{aligned}
	\Psi\left(\begin{array}{cc|c}
	a & b & n \\ \alpha & \beta & r \end{array} \right) &=  \frac{(-1)^r}{r!}
	\Big(\alpha \frac{{\partial} }{\partial a} + \beta \frac{{\partial}}{\partial b}\Big)^{r} \Psi(a,b,n),  \\
	\Psi\left( \begin{array}{cc|c}
	a & b & n \\ \alpha & \beta & r \end{array} \right) &=  \frac{(-1)^r}{(\lfloor{\frac{n}{2}}\rfloor -r)!} \Big(a \frac{{\partial} }{\partial \alpha} + b \frac{{\partial}}{\partial \beta}\Big)^{\lfloor{\frac{n}{2}}\rfloor-r} \Psi(\alpha,\beta,n),  \\
		\Phi\left( \begin{array}{cc|c}
	a & b & n \\ \alpha & \beta & r \end{array} \right) &=  \frac{(-1)^r}{r!}
	\Big(\alpha \frac{{\partial} }{\partial a} + \beta \frac{{\partial}}{\partial b}\Big)^{r} \Phi(a,b,n),  \\
	\Phi\left( \begin{array}{cc|c}
	a & b & n \\ \alpha & \beta & r \end{array} \right) &=  \frac{(-1)^r}{(\lfloor{\frac{n-1}{2}}\rfloor -r)!} \Big(a \frac{{\partial} }{\partial \alpha} + b \frac{{\partial}}{\partial \beta}\Big)^{\lfloor{\frac{n-1}{2}}\rfloor-r} \Phi(\alpha,\beta,n).
	\end{aligned}
	\end{align}
\end{thm}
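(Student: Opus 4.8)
The plan is to prove all four identities by induction on $r$, feeding the single-step recursions of Theorem~\eqref{DDD} and Theorem~\eqref{DDD Pluse} into a telescoping argument whose base cases are supplied by Theorem~\eqref{BASE} and Theorem~\eqref{BASE3}. The four claims split naturally into two dual pairs: the two identities involving $\alpha\frac{\partial}{\partial a}+\beta\frac{\partial}{\partial b}$ are proved by \emph{upward} induction from $r=0$, while the two involving $a\frac{\partial}{\partial\alpha}+b\frac{\partial}{\partial\beta}$ are proved by \emph{downward} induction from the top index.

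First I would establish $\Psi_r(n)=\frac{(-1)^r}{r!}\bigl(\alpha\frac{\partial}{\partial a}+\beta\frac{\partial}{\partial b}\bigr)^r\Psi(a,b,n)$. For the base case $r=0$, Theorem~\eqref{BASE} gives $\Psi_0(n)=\Psi(a,b,n)$, which is precisely the right-hand side with the operator applied zero times. For the inductive step, assuming the formula at $r-1$, I apply Theorem~\eqref{DDD}, namely $\Psi_r(n)=-\frac{1}{r}\,\widetilde{\Psi_{r-1}(n)}$ for the map $\widetilde{\ }$ sending $a\to\alpha,\ b\to\beta,\ \alpha\to0,\ \beta\to0$, so that $\widetilde{\Psi_{r-1}(n)}=\bigl(\alpha\frac{\partial}{\partial a}+\beta\frac{\partial}{\partial b}\bigr)\Psi_{r-1}(n)$. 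Substituting the hypothesis and collecting constants, $-\frac{1}{r}\cdot\frac{(-1)^{r-1}}{(r-1)!}=\frac{(-1)^r}{r!}$, which yields the claim. The third identity (the $\Phi$ analogue) follows verbatim, using $\Phi_0(n)=\Phi(a,b,n)$ from Theorem~\eqref{BASE} and the $\Phi$ part of Theorem~\eqref{DDD}.

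Next I would prove $\Psi_r(n)=\frac{(-1)^r}{(\lfloor n/2\rfloor-r)!}\bigl(a\frac{\partial}{\partial\alpha}+b\frac{\partial}{\partial\beta}\bigr)^{\lfloor n/2\rfloor-r}\Psi(\alpha,\beta,n)$ by downward induction starting from $r=\lfloor n/2\rfloor$, where Theorem~\eqref{BASE3} supplies $\Psi_{\lfloor n/2\rfloor}(n)=(-1)^{\lfloor n/2\rfloor}\Psi(\alpha,\beta,n)$, matching the formula with the operator applied zero times. For the descending step I use Theorem~\eqref{DDD Pluse}, $\Psi_r(n)=-\frac{1}{\lfloor n/2\rfloor-r}\,\widetilde{\Psi_{r+1}(n)}$ for the complementary map $\alpha\to a,\ \beta\to b,\ a\to0,\ b\to0$; substituting the hypothesis at $r+1$, the factor $(\lfloor n/2\rfloor-r)$ absorbs into $(\lfloor n/2\rfloor-r-1)!$ to give $(\lfloor n/2\rfloor-r)!$, and the signs combine into $(-1)^r$. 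The fourth identity is identical with $\Phi$ replacing $\Psi$, $\lfloor n/2\rfloor$ replaced by $\lfloor(n-1)/2\rfloor$, and the base case taken from equation~\eqref{BASE5}.

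The one point that needs real care — and what I regard as the main obstacle — is justifying that iterating the single-step map $\widetilde{\ }$ genuinely produces the $r$-th power of the fixed operator $\alpha\frac{\partial}{\partial a}+\beta\frac{\partial}{\partial b}$. The subtlety is that $\Psi_{r-1}(n)$ is a polynomial in all four parameters $a,b,\alpha,\beta$, so a further application of $\widetilde{\ }$ could a priori differentiate the $\alpha,\beta$ introduced at the previous stage. This does not occur precisely because the map sends $\alpha\to0$ and $\beta\to0$: under $\widetilde{\ }$ the parameters $\alpha,\beta$ are treated as constants, the derivatives $\frac{\partial}{\partial\alpha},\frac{\partial}{\partial\beta}$ never enter, and the operator is $\alpha\frac{\partial}{\partial a}+\beta\frac{\partial}{\partial b}$ with constant coefficients, hence commutes with itself. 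Its $r$-fold composition is therefore literally $\bigl(\alpha\frac{\partial}{\partial a}+\beta\frac{\partial}{\partial b}\bigr)^r$, and the telescoping of factorials and signs proceeds as written. This is exactly the invariance flagged in the First Observation following Theorem~\eqref{DDD}; the same remark with $(a,b)$ and $(\alpha,\beta)$ interchanged legitimizes the iteration in the downward induction for the second and fourth identities.
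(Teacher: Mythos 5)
Your proposal is correct and follows essentially the same route as the paper: the paper presents Theorem~\ref{TT2} as an immediate consequence of iterating the one-step recursions of Theorems~\ref{DDD} and~\ref{DDD Pluse} from the base cases supplied by Theorems~\ref{BASE} and~\ref{BASE3}, which is exactly your upward/downward induction. Your explicit justification that the iterated map $\widetilde{\ }$ coincides with the $r$-th power of the constant-coefficient operator $\alpha\frac{\partial}{\partial a}+\beta\frac{\partial}{\partial b}$ is a point the paper leaves implicit, and it is a welcome addition.
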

An immediate consequence is the following result
\begin{thm}
	\label{orbits}
	For any natural numbers $n$, we have
	\begin{align}
	\begin{aligned}
	\frac{1}{(\lfloor{\frac{n}{2}}\rfloor)!} \Big(\alpha \frac{{\partial} }{\partial a} + \beta \frac{{\partial}}{\partial b}\Big)^{\lfloor{\frac{n}{2}}\rfloor} \Psi(a,b,n) &= \Psi(\alpha,\beta,n)  \\
	\frac{1}{(\lfloor{\frac{n-1}{2}}\rfloor)!} \Big(\alpha \frac{{\partial} }{\partial a} + \beta \frac{{\partial}}{\partial b}\Big)^{\lfloor{\frac{n-1}{2}}\rfloor} \Phi(a,b,n) &= \Phi(\alpha,\beta,n)
	\end{aligned}
	\end{align}
\end{thm}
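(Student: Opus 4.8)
The plan is to specialize the first differential formula of Theorem~\ref{TT2} at the largest admissible index and then recognize the resulting object through Theorem~\ref{BASE3}. Concretely, Theorem~\ref{TT2} supplies, for every $r$, the identity
\[
\Psi\left(\begin{array}{cc|c} a & b & n \\ \alpha & \beta & r \end{array}\right) = \frac{(-1)^r}{r!}\Big(\alpha\frac{\partial}{\partial a} + \beta\frac{\partial}{\partial b}\Big)^{r}\Psi(a,b,n).
\]
The idea is simply to set $r = \lfloor\frac{n}{2}\rfloor$, the top index occurring in the expansion~\eqref{special AAA}. On the left this produces the leading coefficient $\Psi\left(\begin{array}{cc|c} a & b & n \\ \alpha & \beta & \lfloor\frac{n}{2}\rfloor \end{array}\right)$, which is exactly the quantity evaluated in closed form by equation~\eqref{BASE4} of Theorem~\ref{BASE3}.

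Substituting the two descriptions of this leading coefficient against one another, I would arrive at
\[
(-1)^{\lfloor\frac{n}{2}\rfloor}\Psi(\alpha,\beta,n) = \frac{(-1)^{\lfloor\frac{n}{2}\rfloor}}{(\lfloor\frac{n}{2}\rfloor)!}\Big(\alpha\frac{\partial}{\partial a} + \beta\frac{\partial}{\partial b}\Big)^{\lfloor\frac{n}{2}\rfloor}\Psi(a,b,n).
\]
The common factor $(-1)^{\lfloor\frac{n}{2}\rfloor}$ cancels on both sides, and a trivial rearrangement yields precisely the first assertion. The second assertion follows along the identical route: I would take the third identity of Theorem~\ref{TT2} (the one for $\Phi$ built from the operator $\alpha\frac{\partial}{\partial a} + \beta\frac{\partial}{\partial b}$), set $r = \lfloor\frac{n-1}{2}\rfloor$, and invoke equation~\eqref{BASE5} for the leading coefficient, namely $\Phi\left(\begin{array}{cc|c} a & b & n \\ \alpha & \beta & \lfloor\frac{n-1}{2}\rfloor \end{array}\right) = (-1)^{\lfloor\frac{n-1}{2}\rfloor}\Phi(\alpha,\beta,n)$, and again cancel the sign.

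Since every ingredient is already in hand, there is no genuine analytic difficulty; the statement is an immediate corollary of the two cited theorems. The only point requiring care is the index bookkeeping, that is, confirming that $r = \lfloor\frac{n}{2}\rfloor$ (respectively $r = \lfloor\frac{n-1}{2}\rfloor$) is indeed the top index appearing in the expansions~\eqref{special AAA} and~\eqref{special BBB}, so that Theorem~\ref{BASE3} applies verbatim, together with verifying that the powers of $(-1)$ in Theorems~\ref{TT2} and~\ref{BASE3} match and cancel cleanly. Once this alignment is checked, the result drops out with no further computation.
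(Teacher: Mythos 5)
Your proposal is correct and coincides with the paper's own (implicit) derivation: the paper presents Theorem \ref{orbits} as an ``immediate consequence'' of Theorem \ref{TT2}, and your route---evaluating the first (resp.\ third) identity of Theorem \ref{TT2} at the top index $r=\lfloor\frac{n}{2}\rfloor$ (resp.\ $r=\lfloor\frac{n-1}{2}\rfloor$) and identifying the leading coefficient via \eqref{BASE4} and \eqref{BASE5} of Theorem \ref{BASE3}---is exactly that intended argument, since \eqref{BASE4} is itself the $r=\lfloor\frac{n}{2}\rfloor$ instance of the second identity in Theorem \ref{TT2}. The sign bookkeeping you describe checks out, and the cancellation of $(-1)^{\lfloor\frac{n}{2}\rfloor}$ yields the stated formulas.
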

\subsection*{ILLUSTRATIVE EXAMPLE TO APPLY THE METHODS OF THEOREM \eqref{DDD}} It is desirable to clarify how we apply the methods of Theorem\eqref{DDD} to compute the polynomial coefficients $ \Psi\left( \begin{array}{cc|r} a & b & n \\ \alpha & \beta & r \end{array} \right)$ or $\Phi\left( \begin{array}{cc|r} a & b & n \\ \alpha & \beta & r \end{array} \right)$, for any specific value for $n$. Therefore, in the next section, we choose to compute
\[\Psi\left( \begin{array}{cc|r} a & b & 4 \\ \alpha & \beta & r \end{array} \right) \quad \mbox{for} \quad r=0,1,2=\left\lfloor \frac{4}{2} \right\rfloor. \]

\subsection*{LINKS WITH WELL-KNOWN IDENTITY IN THE HISTORY OF NUMBER THEORY }
\label{history}
For example, take $n=4$. Then \[
\Psi\left( \begin{array}{cc|c}
a & b & 4 \\ \alpha & \beta & 0 \end{array} \right) =\Psi(a,b,4)= -2a^2+b^2.   \]
Hence
\begin{align*}
\begin{aligned}
\Psi\left( \begin{array}{cc|c}
a & b & 4 \\ \alpha & \beta & 1 \end{array} \right)  &=   \frac{-1}{1}\Big(\alpha \frac{{\partial} }{\partial a} +  \beta \frac{{\partial} }{\partial b} \Big)(-2a^2+b^2)= 4a \alpha - 2b \beta , \\
\Psi\left( \begin{array}{cc|c}
a & b & 4 \\ \alpha & \beta & 2 \end{array} \right)  &= \frac{-1}{2}  \Big(\alpha \frac{{\partial} }{\partial a} + \beta \frac{{\partial}}{\partial b} \Big) (4a \alpha - 2b \beta) = - 2 \alpha^2 + \beta^2. \\
\end{aligned}
\end{align*}
Then from ~\eqref{special AAA} we immediately obtain the following polynomial identity
\begin{align}
\begin{aligned}
\label{generalization-n-4}
(\beta a - \alpha b)^2 (x^4 + y^4) &= (-2 a^2 + b^2)(\alpha x^2 +\beta xy + \alpha y^2)^2 \\
&+ (4a \alpha - 2b \beta)(\alpha x^2 +\beta xy + \alpha y^2)(a x^2 +b xy + a y^2) \\
&+ (-2 \alpha^2 + \beta^2)(a x^2 + b xy + a y^2)^2
\end{aligned}
\end{align}
Generally, it is desirable to select values for the parameters $a, b, \alpha, \beta$ to make the middle term, $ \Psi\left( \begin{array}{cc|c} a & b & 4 \\ \alpha & \beta & 1 \end{array} \right)  $, equal zero. Therefore, put $ \left( \begin{array}{cc} a & b \\ \alpha & \beta \end{array} \right) = \left( \begin{array}{cc} 1 & 1 \\ 1 & 2 \end{array} \right) $. Then the middle coefficient $4a \alpha - 2b \beta$ of ~\eqref{generalization-n-4}  vanish, and we obtain the following special case for a well-known identity in the history of number theory that is used extensively in the  study of equal sums of like powers and in discovering new formulas for Fibonacci numbers
\begin{align}
\begin{aligned}
x^4 + y^4 + (x+y)^4 = 2 (x^2 + xy + y^2)^2
\end{aligned}
\end{align}
In volume 2, on page $650$, \cite{Dickson} attributes this special case to C. B. Haldeman (1905), although Proth (1878) used it in passing (see page $657$ of \cite{Dickson}).

	\section{COMMON FACTORS}
Also, it is useful to note the following relations. Replace each $\alpha$ and $\beta$ by $\lambda \alpha$ and $\lambda \beta$ respectively in \eqref{special AAA}, we get the following polynomial identity for any $\lambda$
	\begin{equation*}
\label{}
\begin{aligned}
&(\lambda\beta a - \lambda\alpha b)^{\lfloor{\frac{n}{2}}\rfloor} \frac{x^n+y^n}{(x+y)^{\delta(n)}} = \\
&\sum_{r=0}^{\lfloor{\frac{n}{2}}\rfloor}
\Psi\left( \begin{array}{cc|r} a & b & n \\ \lambda\alpha & \lambda\beta & r \end{array} \right)
(\lambda\alpha x^2 + \lambda\beta xy + \lambda \alpha y^2)^{\lfloor{\frac{n}{2}}\rfloor -r} (ax^2+bxy+ay^{2})^{r}
\end{aligned}
\end{equation*}
Then
	\begin{equation}
\label{Vcommon}
\begin{aligned}
 &(\beta a - \alpha b)^{\lfloor{\frac{n}{2}}\rfloor} \frac{x^n+y^n}{(x+y)^{\delta(n)}} = \\ &\sum_{r=0}^{\lfloor{\frac{n}{2}}\rfloor} \lambda^{ -r}
\Psi\left( \begin{array}{cc|r} a & b & n \\ \lambda\alpha & \lambda\beta & r \end{array} \right)
(\alpha x^2 + \beta xy +  \alpha y^2)^{\lfloor{\frac{n}{2}}\rfloor -r} (ax^2+bxy+ay^{2})^{r}
\end{aligned}
\end{equation}
Comparing \eqref{Vcommon} with \eqref{special AAA}, we obtain
\[     \lambda^{ -r}
\Psi\left( \begin{array}{cc|r} a & b & n \\ \lambda\alpha & \lambda\beta & r \end{array} \right) =     \Psi\left( \begin{array}{cc|r} a & b & n \\ \alpha & \beta & r \end{array} \right)       \]
Similarly, we can prove the following useful relations.

\begin{thm}
	\label{Wcommon}
	For any numbers $a, b, \alpha, \beta, \beta a - \alpha b \neq 0, \lambda, r,  n $,  we get
	\begin{subequations}
		\begin{equation}
		\label{Wcommon1}
	   \Psi\left( \begin{array}{cc|c}
	   a & b & n \\ \lambda \alpha & \lambda \beta & r \end{array} \right)
	   =\lambda^{r} \Psi\left( \begin{array}{cc|c}
	   a & b & n \\ \alpha & \beta & r \end{array} \right),
		\end{equation}
		\begin{equation}
		\label{Wcommon2}
	          \Phi\left( \begin{array}{cc|c}
	a & b & n \\ \lambda \alpha & \lambda \beta & r \end{array} \right)
	=\lambda^{r} \Phi\left( \begin{array}{cc|c}
	a & b & n \\ \alpha & \beta & r \end{array} \right),
		\end{equation}
		\begin{equation}
		\label{Wcommon3}
		\Psi\left( \begin{array}{cc|c}
		\lambda a &\lambda b & n \\ \alpha & \beta & r \end{array} \right)
		=\lambda^{\lfloor{\frac{n}{2}}\rfloor - r}
		\Psi\left( \begin{array}{cc|c}
		a & b & n \\ \alpha & \beta & r \end{array} \right),
		\end{equation}
		\begin{equation}
		\label{Wcommon4}
		   \Phi\left( \begin{array}{cc|c}
			\lambda a &\lambda b & n \\ \alpha & \beta & r \end{array} \right)
	=\lambda^{\lfloor{\frac{n-1}{2}}\rfloor - r}
		\Phi\left( \begin{array}{cc|c}
			a    &   b   & n \\
			\alpha & \beta & r 	\end{array} \right),
				\end{equation}
				\begin{equation}
		\label{}
		\Psi\left( \begin{array}{cc|c}
		a & b & n \\ \alpha & \beta & r \end{array} \right) =
		(-1)^{\lfloor{\frac{n}{2}} \rfloor}
		\Psi\left( \begin{array}{cc|c}
		\alpha & \beta  & n \\  a & b &\lfloor{\frac{n}{2}} \rfloor - r \end{array} \right),
				\end{equation}	
				\begin{equation}
		\label{}
	\Phi\left( \begin{array}{cc|c}
	a & b & n \\ \alpha & \beta & r \end{array} \right) =
	(-1)^{\lfloor{\frac{n-1}{2}} \rfloor}
	\Phi\left( \begin{array}{cc|c}
	\alpha & \beta  & n \\  a & b &\lfloor{\frac{n-1}{2}} \rfloor - r \end{array} \right),	
		\end{equation}
			\begin{equation}
		\label{common factor1}
		\lambda^{\lfloor{\frac{n}{2}}\rfloor}\Psi(a,b,n) = \Psi(\lambda a,\lambda  b,n),   		
		\end{equation}
		\begin{equation}
		\label{common factor2}
		\lambda^{\lfloor{\frac{n-1}{2}}\rfloor}\Phi(a,b,n)  =  \Phi(\lambda a,\lambda  b,n).
		\end{equation}		
	\end{subequations}
\end{thm}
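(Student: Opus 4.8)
The plan is to derive every identity in the theorem by exactly the device used immediately above for \eqref{Wcommon1}: substitute a rescaled or permuted set of parameters into the defining expansions \eqref{special AAA} and \eqref{special BBB}, extract the resulting powers of the scaling parameter (or the sign), and then invoke the algebraic independence of $\alpha x^2+\beta xy+\alpha y^2$ and $ax^2+bxy+ay^2$ from Lemma \ref{SYMMETRIC2} to equate coefficients term by term. Since Lemma \ref{SYMMETRIC2} is in force precisely when $\beta a-\alpha b\neq 0$, I keep that hypothesis active throughout.

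First, \eqref{Wcommon2} is obtained verbatim from the argument for \eqref{Wcommon1}, only with \eqref{special BBB} in place of \eqref{special AAA}: replacing $\alpha,\beta$ by $\lambda\alpha,\lambda\beta$ scales the left factor $(\beta a-\alpha b)^{\lfloor(n-1)/2\rfloor}$ by $\lambda^{\lfloor(n-1)/2\rfloor}$ and each summand's form $(\lambda\alpha x^2+\lambda\beta xy+\lambda\alpha y^2)^{\lfloor(n-1)/2\rfloor-r}$ by $\lambda^{\lfloor(n-1)/2\rfloor-r}$, so that comparison with \eqref{special BBB} leaves a residual factor $\lambda^{-r}$ attached to the rescaled $\Phi$, which is \eqref{Wcommon2}. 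For \eqref{Wcommon3} and \eqref{Wcommon4} I instead rescale the other form, sending $a\to\lambda a$ and $b\to\lambda b$: now $ax^2+bxy+ay^2$ scales by $\lambda$, contributing $\lambda^{r}$ from the $r$-th summand, while $(\beta a-\alpha b)^{\lfloor n/2\rfloor}$ again supplies $\lambda^{\lfloor n/2\rfloor}$ on the left. Balancing these two exponents is exactly what produces the shift $\lfloor n/2\rfloor-r$ (respectively $\lfloor(n-1)/2\rfloor-r$ for $\Phi$).

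For the two swap identities I apply the substitution $a\leftrightarrow\alpha$, $b\leftrightarrow\beta$ to \eqref{special AAA}. Under it the two quadratic forms are interchanged while $\beta a-\alpha b\mapsto\alpha b-a\beta=-(\beta a-\alpha b)$, so the left side picks up $(-1)^{\lfloor n/2\rfloor}$; reindexing the right-hand sum by $r\mapsto\lfloor n/2\rfloor-r$ restores the order of the forms as in \eqref{special AAA}, and matching coefficients through Lemma \ref{SYMMETRIC2} delivers both the factor $(-1)^{\lfloor n/2\rfloor}$ and the reversed index $\lfloor n/2\rfloor-r$. The $\Phi$ version is identical, starting from \eqref{special BBB}. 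Finally, \eqref{common factor1} and \eqref{common factor2} are the specialization $r=0$ of \eqref{Wcommon3} and \eqref{Wcommon4}, since by Theorem \ref{BASE} one has $\Psi(a,b,n)=\Psi\left(\begin{array}{cc|c}a & b & n\\ \alpha & \beta & 0\end{array}\right)$ and likewise for $\Phi$; equivalently they fall straight out of the closed forms in Theorem \ref{BASE}, because $\frac{b+2a}{b-2a}$ is invariant under $a\to\lambda a$, $b\to\lambda b$ while the prefactor $(2a-b)^{\lfloor n/2\rfloor}$ scales by $\lambda^{\lfloor n/2\rfloor}$.

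The computations are all routine; the only things to watch are the exponent bookkeeping and the single sign in the swap. The one genuine point of care is that every coefficient comparison rests on Lemma \ref{SYMMETRIC2}, whose hypothesis $\beta a-\alpha b\neq 0$ must be preserved under each substitution — it plainly is, since scaling and swapping only multiply $\beta a-\alpha b$ by $\lambda$ or by $-1$ — and that the direct closed-form route to \eqref{common factor1} additionally invokes $2a-b\neq 0$, so that degenerate locus should be either excluded or handled by continuity in the parameters.
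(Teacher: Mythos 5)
Your proposal is correct and follows essentially the same route as the paper: the paper proves \eqref{Wcommon1} by the substitution-and-compare argument and disposes of the remaining identities with ``similarly,'' which is exactly the bookkeeping you carry out (the swap identities likewise mirror the sign observation the paper makes before Theorem \ref{BASE3}). Your explicit appeal to Lemma \ref{SYMMETRIC2} to justify equating coefficients, and your remark about the degenerate locus $2a-b=0$ being handled by polynomial continuity, are only slightly more careful than the paper's implicit treatment, not a different method.
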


\section{FORMULAS FOR SUMS INCLUDING $\Psi$ AND $\Phi$ POLYNOMIALS }
We now prove the following theorem.
\begin{thm}
	\label{3R3}
	For any $a,b,\alpha,\beta, \theta, n$, $\beta a - \alpha b \neq 0,$ the following identities are true
	
	\begin{subequations}
		\begin{equation}
	\label{R3}
	\sum_{r=0}^{\lfloor{\frac{n}{2}}\rfloor}  \Psi\left( \begin{array}{cc|r} a & b & n \\ \alpha & \beta & r \end{array} \right) \theta^r = \Psi(a-\alpha \theta , b - \beta \theta, n),
			\end{equation}
		\begin{equation}
	\label{R3R}
	\sum_{r=0}^{\lfloor{\frac{n-1}{2}}\rfloor}  \Phi\left( \begin{array}{cc|r} a & b & n \\ \alpha & \beta & r \end{array} \right) \theta^r = \Phi(a-\alpha \theta , b - \beta \theta, n)
			\end{equation}
				
		\end{subequations}
\end{thm}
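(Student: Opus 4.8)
The plan is to derive both identities directly from the operator representation furnished by Theorem \ref{TT2}, recognizing the left-hand sum as a (finite) multivariate Taylor expansion. First I would substitute the first formula of Theorem \ref{TT2},
\[
\Psi\left( \begin{array}{cc|c} a & b & n \\ \alpha & \beta & r \end{array} \right) = \frac{(-1)^r}{r!}\Big(\alpha \frac{\partial}{\partial a} + \beta \frac{\partial}{\partial b}\Big)^{r} \Psi(a,b,n),
\]
into the left-hand side of \eqref{R3}. Absorbing the scalar $(-1)^r\theta^r$ into the $r$-th power of the operator turns the left side into
\[
\sum_{r=0}^{\lfloor \frac{n}{2}\rfloor} \frac{1}{r!}\Big(-\alpha\theta \frac{\partial}{\partial a} - \beta\theta \frac{\partial}{\partial b}\Big)^{r} \Psi(a,b,n).
\]

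Next I would invoke the Taylor expansion of a polynomial about a shifted point: for any polynomial $P(a,b)$ and any shifts $u,v$ one has the finite identity $P(a+u,b+v) = \sum_{r\geq 0}\frac{1}{r!}\big(u\frac{\partial}{\partial a} + v\frac{\partial}{\partial b}\big)^{r}P(a,b)$. Taking $P(a,b)=\Psi(a,b,n)$, $u=-\alpha\theta$, and $v=-\beta\theta$ yields
\[
\Psi(a-\alpha\theta,\, b-\beta\theta,\, n) = \sum_{r\geq 0} \frac{1}{r!}\Big(-\alpha\theta \frac{\partial}{\partial a} - \beta\theta \frac{\partial}{\partial b}\Big)^{r} \Psi(a,b,n),
\]
which agrees term by term with the rewritten left side, the two expressions differing only in the range of the summation index.

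The one point that requires care, and which I regard as the crux of the argument, is the truncation of this series. Here I would use the explicit formula \eqref{P1}, which exhibits $\Psi(a,b,n)$ as a sum of terms $(-a)^{i}(2a-b)^{\lfloor \frac{n}{2}\rfloor - i}$, each homogeneous of total degree $\lfloor \frac{n}{2}\rfloor$ in $(a,b)$; hence $\Psi(a,b,n)$ is itself homogeneous of degree $\lfloor \frac{n}{2}\rfloor$. Since the first-order operator $\alpha\frac{\partial}{\partial a}+\beta\frac{\partial}{\partial b}$ lowers degree by one at each application, it annihilates $\Psi(a,b,n)$ for every $r>\lfloor \frac{n}{2}\rfloor$, so the Taylor series terminates exactly at $r=\lfloor \frac{n}{2}\rfloor$. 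The two finite sums then coincide via Theorem \ref{TT2}, establishing \eqref{R3}. The proof of \eqref{R3R} is identical: I would apply the third formula of Theorem \ref{TT2} and the fact that \eqref{P2} presents $\Phi(a,b,n)$ as homogeneous of degree $\lfloor \frac{n-1}{2}\rfloor$ in $(a,b)$, so the corresponding series truncates at $r=\lfloor \frac{n-1}{2}\rfloor$.
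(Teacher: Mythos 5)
Your proof is correct, but it takes a genuinely different route from the paper's. The paper argues directly from the master expansion \eqref{special AAA}: it reduces the right-hand side modulo the auxiliary quadratic form $\Lambda_\theta = \theta q_1 - q_2$ so that $q_2 \equiv \theta q_1$, then substitutes $(a,b) \mapsto (\alpha\theta - a, \beta\theta - b)$ into \eqref{special AAA} to produce a second congruence with the same left-hand side, and finally invokes the algebraic independence of $\Lambda_\theta$ and $q_1$ (Lemma \ref{SYMMETRIC2}, which is where the hypothesis $\beta a - \alpha b \neq 0$ enters) together with the homogeneity relation \eqref{common factor1} to equate the two. You instead take the operator representation of Theorem \ref{TT2} as your starting point and recognize $\sum_{r}\Psi_r(n)\theta^r$ as the finite Taylor expansion of $\Psi(a,b,n)$ at the shifted argument $(a-\alpha\theta, b-\beta\theta)$, with the truncation at $r=\lfloor \frac{n}{2}\rfloor$ correctly justified by the homogeneity of degree $\lfloor \frac{n}{2}\rfloor$ read off from \eqref{P1} (equivalently from \eqref{common factor1}). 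There is no circularity: Theorem \ref{TT2} is established before Theorem \ref{3R3} and does not use it, though you should note that your argument inherits the hypothesis $\beta a - \alpha b \neq 0$ through Theorem \ref{TT2} rather than using it directly. What your route buys is brevity and a conceptual explanation --- the identity \emph{is} Taylor's formula once the coefficients are written as iterated derivatives, and the later generalization \eqref{RR1}--\eqref{RR2} then drops out by differentiating the Taylor series $k$ times in $\theta$, which is essentially how the paper obtains Theorem \ref{RR} anyway. What the paper's route buys is that it stays at the level of the defining polynomial expansion and the independence lemma, without routing through the differential recurrences of Theorems \ref{DDD} and \ref{TT2}.
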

\begin{proof}
	Define $q_1:=\alpha x^2+\beta xy +\alpha y^2$ and $q_2:=a x^2+b xy +a y^2$ and
	\[ \Lambda_\theta :=\theta q_1 - q_2 = (\alpha \theta - a)x^2 + (\beta \theta - b)xy + (\alpha \theta - a)y^2 .\] 	
	From \eqref{special AAA}, we know that
	\[
	(\beta a - \alpha b)^{\lfloor{\frac{n}{2}}\rfloor} \frac{x^n+y^n}{(x+y)^{\delta(n)}} = \sum_{r=0}^{\lfloor{\frac{n}{2}}\rfloor}  \Psi\left( \begin{array}{cc|r} a & b & n \\ \alpha & \beta & r \end{array} \right) (q_1)^{\lfloor{\frac{n}{2}}\rfloor -r} (q_2)^{r}, \]
	As $q_2 \equiv \theta q_1  \pmod{\Lambda_\theta}$, we get
	\begin{equation}
	\label{R4}
	(\beta a - \alpha b)^{\lfloor{\frac{n}{2}}\rfloor} \frac{x^n+y^n}{(x+y)^{\delta(n)}} \equiv q_1^{\lfloor{\frac{n}{2}}\rfloor} \sum_{r=0}^{\lfloor{\frac{n}{2}}\rfloor}  \Psi\left( \begin{array}{cc|r} a & b & n \\ \alpha & \beta & r \end{array} \right) \theta^r   \pmod{\Lambda_\theta}
	\end{equation}
		Replace each of $a,b$ by $\alpha \theta - a, \beta \theta - b$ respectively,  in \eqref{special AAA}, we obtain
	\begin{equation}
	\label{}
	(\beta [\alpha \theta - a] -
	\alpha[\beta \theta - b])^{\lfloor{\frac{n}{2}}\rfloor} \frac{x^n+y^n}{(x+y)^{\delta(n)}} \equiv
	\Psi(\alpha \theta - a, \beta \theta - b, n)  q_1^{\lfloor{\frac{n}{2}}\rfloor} \pmod{\Lambda_\theta}
	\end{equation}	
	As $\beta [\alpha \theta - a] -
	\alpha[\beta \theta - b] = - (\beta a - \alpha b)$, and noting from \eqref{common factor1} that
	\[ (-1)^{\lfloor{\frac{n}{2}}\rfloor} \Psi(\alpha \theta - a, \beta \theta - b, n) =   \Psi(a-\alpha \theta , b - \beta \theta, n), \]
	we obtain the following congruence
		\begin{equation}
	\label{two-I}
	(  \beta a - \alpha b  )^{\lfloor{\frac{n}{2}}\rfloor} \frac{x^n+y^n}{(x+y)^{\delta(n)}} \equiv
	\Psi(a-\alpha \theta , b - \beta \theta, n) q_1^{\lfloor{\frac{n}{2}}\rfloor} \pmod{\Lambda_\theta}
	\end{equation}
	Now, subtracting \eqref{R4} and\eqref{two-I}, we obtain
		\begin{equation}
	\label{two-II}
	0 \equiv \Big(
	\sum_{r=0}^{\lfloor{\frac{n}{2}}\rfloor}  \Psi\left( \begin{array}{cc|r} a & b & n \\ \alpha & \beta & r \end{array} \right) \theta^r -
		\Psi(a-\alpha \theta , b - \beta \theta, n) \Big) q_1^{\lfloor{\frac{n}{2}}\rfloor} \pmod{\Lambda_\theta}
	\end{equation}
	As the congruence \eqref{two-II} is true for any $x,y$, and as $(\beta \theta - b) \alpha -  (\alpha \theta - a) \beta = \beta a - \alpha b \neq 0$, then from Lemma \eqref{SYMMETRIC2} the binary quadratic forms $\Lambda_\theta$ and $ q_1 $ are algebraic independent. This immediately leads to
			\begin{equation}
	\label{two-III}
	0= \sum_{r=0}^{\lfloor{\frac{n}{2}}\rfloor}  \Psi\left( \begin{array}{cc|r} a & b & n \\ \alpha & \beta & r \end{array} \right) \theta^r -
\Psi(a-\alpha \theta , b - \beta \theta, n).
	\end{equation}
	Hence we obtain the proof of \eqref{R3}. Similarly, we can prove \eqref{R3R}. \end{proof}

  The following desirable generalization is important

\begin{thm}
		\label{general}
	For any $a,b,\alpha,\beta, \eta, \xi, n$, $\beta a - \alpha b \neq 0,$ the following identities are true
	\begin{equation}
	\begin{aligned}
	\sum_{r=0}^{\lfloor{\frac{n}{2}}\rfloor}  \Psi\left( \begin{array}{cc|r} a & b & n \\ \alpha & \beta & r \end{array} \right)  \xi^{\lfloor{\frac{n}{2}}\rfloor  - r}  \eta^r =\Psi(a \xi-\alpha \eta, b \xi - \beta \eta, n), \\
		\sum_{r=0}^{\lfloor{\frac{n-1}{2}}\rfloor}  \Phi\left( \begin{array}{cc|r} a & b & n \\ \alpha & \beta & r \end{array} \right)  \xi^{\lfloor{\frac{n-1}{2}}\rfloor  - r}  \eta^r =\Phi(a \xi-\alpha \eta, b \xi - \beta \eta, n).
	\end{aligned}
	\end{equation}
\end{thm}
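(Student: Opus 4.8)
The plan is to obtain Theorem~\ref{general} as a homogenization of Theorem~\ref{3R3}: the right-hand sum here is precisely the sum $\sum_r \Psi_r(n)\theta^r$ of Theorem~\ref{3R3} with $\theta$ replaced by the ratio $\eta/\xi$ and the denominators cleared. The two scaling relations \eqref{common factor1} and \eqref{common factor2} are exactly what is needed to absorb the resulting power of $\xi$ into the arguments of $\Psi$ and $\Phi$, so no new congruence or algebraic-independence argument is required beyond what already proves Theorem~\ref{3R3}.

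First I would treat the $\Psi$ identity and assume $\xi \neq 0$. Setting $\theta = \eta/\xi$ and factoring $\xi^{\lfloor \frac n2\rfloor}$ out of the left-hand side gives
\[
\sum_{r=0}^{\lfloor \frac n2 \rfloor} \Psi\left(\begin{array}{cc|c} a & b & n \\ \alpha & \beta & r\end{array}\right)\xi^{\lfloor \frac n2 \rfloor - r}\eta^r = \xi^{\lfloor \frac n2 \rfloor}\sum_{r=0}^{\lfloor \frac n2 \rfloor}\Psi_r(n)\,\theta^r .
\]
Theorem~\ref{3R3} evaluates the inner sum as $\Psi(a-\alpha\theta,\,b-\beta\theta,\,n)$, so the whole expression equals $\xi^{\lfloor \frac n2\rfloor}\,\Psi\!\left(a-\tfrac{\alpha\eta}{\xi},\,b-\tfrac{\beta\eta}{\xi},\,n\right)$. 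Applying \eqref{common factor1} with $\lambda=\xi$ pulls this power inside, turning the expression into $\Psi(\xi a-\alpha\eta,\,\xi b-\beta\eta,\,n)$, which is the claimed right-hand side. The $\Phi$ case is word-for-word identical, with $\lfloor \frac{n-1}{2}\rfloor$ in place of $\lfloor \frac n2\rfloor$ and \eqref{common factor2} in place of \eqref{common factor1}; the hypothesis $\beta a-\alpha b\neq 0$ is inherited directly from Theorem~\ref{3R3}.

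The only gap left by this computation is the locus $\xi=0$, where the substitution $\theta=\eta/\xi$ is illegitimate. I would close it by a polynomial-identity argument rather than any estimate: both sides of each identity are polynomials in $\xi$ whose coefficients are polynomials in $a,b,\alpha,\beta,\eta$, and since they agree at every $\xi\neq 0$, hence at infinitely many values, they coincide as polynomials and so agree at $\xi=0$ as well. Alternatively one can verify $\xi=0$ by hand: the left side collapses to the single surviving term $r=\lfloor \frac n2\rfloor$, which by \eqref{BASE4} equals $(-1)^{\lfloor \frac n2\rfloor}\Psi(\alpha,\beta,n)\,\eta^{\lfloor \frac n2\rfloor}$, while the right side $\Psi(-\alpha\eta,-\beta\eta,n)$ equals the same quantity by \eqref{common factor1} with $\lambda=-\eta$. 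I do not expect a genuine obstacle here; the whole content is the bookkeeping of the two homogeneity relations, and the $\xi=0$ case is the only point that demands a separate line.
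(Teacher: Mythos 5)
Your proposal is correct and follows essentially the same route as the paper: substitute $\theta=\eta/\xi$ into Theorem~\ref{3R3}, clear denominators with $\xi^{\lfloor\frac{n}{2}\rfloor}$ (resp.\ $\xi^{\lfloor\frac{n-1}{2}\rfloor}$), and absorb that power via the homogeneity relations \eqref{common factor1} and \eqref{common factor2}. Your explicit treatment of the $\xi=0$ locus (by polynomial identity or by the boundary formula \eqref{BASE4}) is a small but welcome addition to the paper's bare ``without loss of generality, let $\xi\neq 0$.''
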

\begin{proof}
	Without loss of generality, let $\xi \neq 0$. We obtain the proof by replacing each $\theta$ in \eqref{R3} and  \eqref{R3R} by $\frac{\eta}{\xi},$ and multiplying each side of \eqref{R3} by $\xi^{\lfloor{\frac{n}{2}}\rfloor},$ and \eqref{R3R} by $\xi^{\lfloor{\frac{n-1}{2}}\rfloor},$  and noting that
	\[\xi^{\lfloor{\frac{n}{2}}\rfloor}\Psi(a-\alpha \frac{\eta}{\xi} , b - \beta \frac{\eta}{\xi}, n) = \Psi(a \xi-\alpha \eta, b \xi - \beta \eta, n) \] and
	 \[	\xi^{\lfloor{\frac{n-1}{2}}\rfloor}\Phi(a-\alpha \frac{\eta}{\xi} , b - \beta \frac{\eta}{\xi}, n) = \Phi(a \xi-\alpha \eta, b \xi - \beta \eta, n)
		 \]
	\end{proof}
Replacing $\theta$ by $\pm 1$ in \eqref{R3}, and \eqref{R3R}, we obtain the following desirable special cases
\begin{thm}
		\label{sum}
	For any $a,b,\alpha,\beta, n$, $\beta a - \alpha b \neq 0$ the following identities are true
	\begin{align}
	\begin{aligned}
	\label{theta equal 1}
	\centering	 	
	\sum_{r=0}^{\lfloor{\frac{n}{2}}\rfloor}  \Psi\left( \begin{array}{cc|r} a & b & n \\ \alpha & \beta & r \end{array} \right) &= \Psi(a-\alpha , b - \beta , n), \\ 	 	
	\sum_{r=0}^{\lfloor{\frac{n-1}{2}}\rfloor}  \Phi\left( \begin{array}{cc|r} a & b & n \\ \alpha & \beta & r \end{array} \right) &= \Phi(a-\alpha , b - \beta , n)
	\end{aligned}
	\end{align}
\end{thm}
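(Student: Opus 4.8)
The plan is to obtain Theorem~\ref{sum} as an immediate specialization of Theorem~\ref{3R3}, in which the free parameter $\theta$ is set equal to $1$. Theorem~\ref{3R3} already establishes the full polynomial identities \eqref{R3} and \eqref{R3R} for \emph{every} value of $\theta$ (under the standing hypothesis $\beta a - \alpha b \neq 0$), so no new machinery is required: the entire content of the present statement is contained in that earlier result, and the preceding sentence of the text already signals this by referring to the substitution $\theta = \pm 1$.

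First I would recall \eqref{R3}, namely
\[
\sum_{r=0}^{\lfloor{\frac{n}{2}}\rfloor}  \Psi\left( \begin{array}{cc|r} a & b & n \\ \alpha & \beta & r \end{array} \right) \theta^r = \Psi(a-\alpha \theta , b - \beta \theta, n),
\]
and set $\theta = 1$. Because $1^r = 1$ for every $r$, the weighting factor $\theta^r$ on the left collapses to unity, so the left-hand side becomes the unweighted sum $\sum_{r=0}^{\lfloor{\frac{n}{2}}\rfloor} \Psi\left( \begin{array}{cc|r} a & b & n \\ \alpha & \beta & r \end{array} \right)$, while the right-hand side specializes to $\Psi(a-\alpha, b-\beta, n)$. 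This is exactly the first identity of Theorem~\ref{sum}. The second identity follows verbatim by substituting $\theta = 1$ into \eqref{R3R}, with $\lfloor{\frac{n}{2}}\rfloor$ replaced by $\lfloor{\frac{n-1}{2}}\rfloor$ throughout.

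There is essentially no obstacle to surmount here; the only point meriting a word of care is that the nondegeneracy hypothesis $\beta a - \alpha b \neq 0$ transfers unchanged, since it is precisely the condition assumed in Theorem~\ref{3R3} and it does not involve $\theta$. I would also note in passing, to justify the \say{$\pm 1$} remark in the text, that the complementary choice $\theta = -1$ produces by the same one-line specialization the companion alternating-sum identities $\sum_{r} (-1)^r \Psi\left( \begin{array}{cc|r} a & b & n \\ \alpha & \beta & r \end{array} \right) = \Psi(a+\alpha, b+\beta, n)$, and likewise for $\Phi$; these could be recorded alongside the stated case at no extra cost.
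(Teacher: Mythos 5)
Your proposal is correct and coincides with the paper's own derivation: the text obtains Theorem~\ref{sum} precisely by substituting $\theta = 1$ into \eqref{R3} and \eqref{R3R} of Theorem~\ref{3R3}, and the $\theta = -1$ case you mention is exactly the subsequent theorem in the paper.
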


\begin{thm}
	For any $a,b,\alpha,\beta, n$, $\beta a - \alpha b \neq 0$ the following identities are true
	\begin{align}
	\begin{aligned}
	\label{theta equal -11}
	\centering	 	
	\sum_{r=0}^{\lfloor{\frac{n}{2}}\rfloor}  \Psi\left( \begin{array}{cc|r} a & b & n \\ \alpha & \beta & r \end{array} \right) (-1)^{r} &= \Psi(a + \alpha , b + \beta , n),  \\
	\sum_{r=0}^{\lfloor{\frac{n-1}{2}}\rfloor}  \Phi\left( \begin{array}{cc|r} a & b & n \\ \alpha & \beta & r \end{array} \right) (-1)^{r} &= \Phi(a + \alpha , b + \beta , n)
	\end{aligned}
	\end{align}
\end{thm}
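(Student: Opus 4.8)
The plan is to recognize this statement as the $\theta = -1$ specialization of Theorem \eqref{3R3}, mirroring exactly how the immediately preceding Theorem \eqref{sum} arises from the choice $\theta = +1$. Since the hypothesis here, namely $\beta a - \alpha b \neq 0$, is identical to the hypothesis of Theorem \eqref{3R3}, no new constraints on the parameters are needed, and I can invoke \eqref{R3} and \eqref{R3R} directly with $\theta = -1$ permitted.

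First I would substitute $\theta = -1$ into the $\Psi$-identity \eqref{R3}. The left-hand side becomes
\[
\sum_{r=0}^{\lfloor{\frac{n}{2}}\rfloor}  \Psi\left( \begin{array}{cc|r} a & b & n \\ \alpha & \beta & r \end{array} \right) (-1)^{r},
\]
while the right-hand side $\Psi(a - \alpha\theta, b - \beta\theta, n)$ simplifies, upon noting $a - \alpha(-1) = a + \alpha$ and $b - \beta(-1) = b + \beta$, to $\Psi(a + \alpha, b + \beta, n)$. This yields the first claimed identity verbatim. Next I would repeat the identical substitution in the $\Phi$-identity \eqref{R3R}, where the summation range is $0 \le r \le \lfloor \frac{n-1}{2} \rfloor$ rather than $0 \le r \le \lfloor \frac{n}{2} \rfloor$, obtaining the second claimed identity $\Phi(a + \alpha, b + \beta, n)$ by the same sign computation.

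Honestly, there is no real obstacle here: the only computation is the elementary sign bookkeeping $a - \alpha\theta = a + \alpha$ at $\theta = -1$, and the work establishing the result has already been done in the proof of Theorem \eqref{3R3} via the algebraic independence argument of Lemma \eqref{SYMMETRIC2}. The single point worth stating explicitly is that $\theta = -1$ is an admissible value in Theorem \eqref{3R3}, since that theorem is proved for arbitrary $\theta$ subject only to $\beta a - \alpha b \neq 0$; in particular the derived algebraic-independence condition $(\beta\theta - b)\alpha - (\alpha\theta - a)\beta = \beta a - \alpha b \neq 0$ used in its proof is independent of $\theta$, so it holds at $\theta = -1$ with no degeneration. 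Thus the proof is a one-line specialization for each of the two identities.
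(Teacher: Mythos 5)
Your proposal is correct and matches the paper exactly: the paper obtains this theorem by the remark ``Replacing $\theta$ by $\pm 1$ in \eqref{R3} and \eqref{R3R}'' applied to Theorem \eqref{3R3}, which is precisely your $\theta=-1$ specialization. Nothing further is needed.
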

\subsection{GENERALIZATIONS FOR THEOREM \eqref{3R3}}
Now we can generalize Theorem \eqref{3R3} by applying the following specific differential map, for $k$ times, on \eqref{R3} and \eqref{R3R}, that sends
\[ a \longrightarrow 0 ,  b \longrightarrow 0, \alpha \longrightarrow 0 ,  \beta  \longrightarrow 0, \theta \longrightarrow -1  \] and noting that with respect to this differential map we get
\[ a-\alpha \theta \longrightarrow \alpha \quad , \quad b-\beta \theta \longrightarrow \beta.   \]
 Now, from \eqref{BASE1}, \eqref{BASE2} of Theorem \eqref{BASE} and from Corollary \eqref{VVV}, we get
 \begin{equation*}
 \begin{aligned}
  \Big( - \frac{{\partial}}{\partial \theta} \Big)^{k} &
  \Psi(a-\alpha \theta , b - \beta \theta, n) \\
  &=\Big( 0 \frac{{\partial} }{\partial a} +  0 \frac{{\partial} }{\partial b}   + 0  \frac{{\partial}}{\partial \alpha}  +  0  \frac{{\partial}}{\partial \beta} -  \frac{{\partial}}{\partial \theta}  \Big)^{k}
   \Psi\left( \begin{array}{cc|r} a-\alpha \theta & b - \beta \theta & n \\ \alpha & \beta & 0\end{array} \right) \\
 &= (-1)^{k} (k!) \Psi\left( \begin{array}{cc|r} a-\alpha \theta & b - \beta \theta & n \\ \alpha & \beta & k\end{array} \right), 	
 \end{aligned}
 \end{equation*}
   \begin{equation*}
 \begin{aligned}
 \Big( - \frac{{\partial}}{\partial \theta} \Big)^{k}&
 \Phi(a-\alpha \theta , b - \beta \theta, n) \\
  &= \Big( 0 \frac{{\partial} }{\partial a} +  0 \frac{{\partial} }{\partial b}   + 0  \frac{{\partial}}{\partial \alpha}  +  0  \frac{{\partial}}{\partial \beta} -  \frac{{\partial}}{\partial \theta}  \Big)^{k}
 \Phi\left( \begin{array}{cc|r} a-\alpha \theta & b - \beta \theta & n \\ \alpha & \beta & 0\end{array} \right) \\
 &= (-1)^{k} (k!) \Phi\left( \begin{array}{cc|r} a-\alpha \theta & b - \beta \theta & n \\ \alpha & \beta & k\end{array} \right). 	
 \end{aligned}
 \end{equation*}
  Hence we obtain the following desirable generalization for Theorem \eqref{3R3}.
  \begin{thm}
 	\label{RR}
 	For any $n,k, a,b,\alpha,\beta, \theta, n$, $\beta a - \alpha b \neq 0,$ the following identities are true
 	\begin{subequations}
 \begin{equation}
 	\label{RR1}
 	\sum_{r=k}^{\lfloor{\frac{n}{2}}\rfloor} \binom{r}{k}  \Psi\left( \begin{array}{cc|r} a & b & n \\ \alpha & \beta & r \end{array} \right) \theta^{r-k} = \Psi\left( \begin{array}{cc|r} a-\alpha \theta  & b-\beta \theta  & n \\ \alpha & \beta & k \end{array} \right),
 \end{equation}
 \begin{equation}
 \label{RR2}
 \sum_{r=k}^{\lfloor{\frac{n-1}{2}}\rfloor} \binom{r}{k}  \Phi\left( \begin{array}{cc|r} a & b & n \\ \alpha & \beta & r \end{array} \right) \theta^{r-k} = \Phi\left( \begin{array}{cc|r} a-\alpha \theta  & b-\beta \theta  & n \\ \alpha & \beta & k \end{array} \right)
  	\end{equation}
\end{subequations}
 \end{thm}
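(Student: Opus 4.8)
The plan is to obtain Theorem \ref{RR} by applying the operator $\left(-\frac{\partial}{\partial\theta}\right)^{k}$ to the identities of Theorem \ref{3R3}, exploiting the fact that the right-hand side of each such identity has already been differentiated $k$ times in the computation displayed immediately before the statement.

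First I would start from \eqref{R3}, namely
\[
\sum_{r=0}^{\lfloor{\frac{n}{2}}\rfloor} \Psi\left( \begin{array}{cc|r} a & b & n \\ \alpha & \beta & r \end{array} \right) \theta^{r} = \Psi(a-\alpha\theta, b-\beta\theta, n),
\]
regarded as a polynomial identity in $\theta$, and apply $\left(-\frac{\partial}{\partial\theta}\right)^{k}$ to both sides. Since the coefficients $\Psi\left( \begin{array}{cc|r} a & b & n \\ \alpha & \beta & r \end{array} \right)$ do not depend on $\theta$, the left-hand side only requires differentiating the powers $\theta^{r}$: every term with $r<k$ is annihilated, while for $r\ge k$ one has $\frac{\partial^{k}}{\partial\theta^{k}}\theta^{r} = \frac{r!}{(r-k)!}\theta^{r-k}$. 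Using $\frac{r!}{(r-k)!}=k!\binom{r}{k}$ and pulling out the sign $(-1)^{k}$, the left-hand side becomes
\[
(-1)^{k}\, k! \sum_{r=k}^{\lfloor{\frac{n}{2}}\rfloor} \binom{r}{k} \Psi\left( \begin{array}{cc|r} a & b & n \\ \alpha & \beta & r \end{array} \right) \theta^{r-k}.
\]

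For the right-hand side I would invoke the computation preceding the theorem, which is itself a consequence of Corollary \ref{VVV} and Theorem \ref{BASE}, namely
\[
\left(-\frac{\partial}{\partial\theta}\right)^{k}\Psi(a-\alpha\theta, b-\beta\theta, n) = (-1)^{k} (k!)\, \Psi\left( \begin{array}{cc|r} a-\alpha\theta & b-\beta\theta & n \\ \alpha & \beta & k \end{array} \right).
\]
Equating the two expressions and cancelling the common nonzero factor $(-1)^{k}k!$ yields \eqref{RR1}. The identity \eqref{RR2} for $\Phi$ follows by the same argument starting from \eqref{R3R} and using the corresponding $\Phi$-version of the right-hand-side differentiation already recorded in the text.

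The only genuine obstacle here is bookkeeping rather than anything conceptual: one must check that the factorial and sign factors produced on the two sides match exactly so that the final cancellation is clean. In particular, the identification of $\frac{r!}{(r-k)!}$ with $k!\binom{r}{k}$, together with the factor $(-1)^{k}k!$ emerging identically on both sides from the $k$-fold application of $-\partial/\partial\theta$, is precisely what causes the binomial coefficient $\binom{r}{k}$ to appear in the statement. Since the right-hand differentiation has already been carried out, essentially all that remains is this elementary verification of coefficients.
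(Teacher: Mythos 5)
Your proposal is correct and follows exactly the paper's own route: apply $\left(-\frac{\partial}{\partial\theta}\right)^{k}$ to \eqref{R3} and \eqref{R3R}, use the displayed computation (resting on Corollary \ref{VVV} and Theorem \ref{BASE}) to evaluate the right-hand side as $(-1)^{k}k!$ times the shifted $\Psi$ (resp.\ $\Phi$) symbol, and cancel the factor $(-1)^{k}k!$ against the one produced on the left by differentiating $\theta^{r}$. You merely make explicit the left-hand-side bookkeeping $\frac{r!}{(r-k)!}=k!\binom{r}{k}$ that the paper leaves implicit, which is a welcome addition but not a different argument.
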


Again, without loss of generality, let $\xi \neq 0$. By replacing each $\theta$ in \eqref{RR1} and  \eqref{RR2} by $\frac{\eta}{\xi},$ and multiplying each side of \eqref{RR1} by $\xi^{\lfloor{\frac{n}{2}}\rfloor -k}$ and  \eqref{RR2} by $\xi^{\lfloor{\frac{n-1}{2}}\rfloor -k},$  and noting from the properties of $\Psi$ and $\Phi$ polynomials, given in \eqref{Wcommon3}, \eqref{Wcommon4} of Theorem \eqref{Wcommon}, that
\[ \xi^{\lfloor{\frac{n}{2}}\rfloor -k} \Psi\left( \begin{array}{cc|r} a-\alpha \frac{\eta}{\xi}  & b-\beta \frac{\eta}{\xi}  & n \\ \alpha & \beta & k \end{array}     \right)  =  \Psi\left( \begin{array}{cc|r} a\xi-\alpha \eta  & b\xi-\beta \eta  & n \\ \alpha & \beta & k \end{array} \right),         \]
\[ \xi^{\lfloor{\frac{n-1}{2}}\rfloor -k} \Phi\left( \begin{array}{cc|r} a-\alpha \frac{\eta}{\xi}  & b-\beta \frac{\eta}{\xi}  & n \\ \alpha & \beta & k \end{array}     \right)  =  \Phi\left( \begin{array}{cc|r} a\xi-\alpha \eta  & b\xi-\beta \eta  & n \\ \alpha & \beta & k \end{array} \right).         \]
Therefore, we obtain the following generalization for Theorem \eqref{RR}.
 \begin{thm}
	\label{RR-general}
	For any $n,k,a,b,\alpha,\beta, \theta, n$, $\beta a - \alpha b \neq 0,$ the following identities are true
	\begin{align}
	\begin{aligned}
	\label{RR1-general}
	\sum_{r=k}^{\lfloor{\frac{n}{2}}\rfloor} \binom{r}{k}  \Psi\left( \begin{array}{cc|r} a & b & n \\ \alpha & \beta & r \end{array} \right) \xi^{\lfloor{\frac{n}{2}}\rfloor  - r}  \eta^{r-k} = \Psi\left( \begin{array}{cc|r} a\xi-\alpha \eta  & b\xi-\beta \eta  & n \\ \alpha & \beta & k \end{array} \right),
	\end{aligned}
	\end{align}
		\begin{align}
	\begin{aligned}
	\label{RR2-general}
\sum_{r=k}^{\lfloor{\frac{n-1}{2}}\rfloor} \binom{r}{k}  \Phi\left( \begin{array}{cc|r} a & b & n \\ \alpha & \beta & r \end{array} \right) \xi^{\lfloor{\frac{n-1}{2}}\rfloor  - r}  \eta^{r-k} = \Phi\left( \begin{array}{cc|r} a\xi-\alpha \eta  & b\xi-\beta \eta  & n \\ \alpha & \beta & k \end{array} \right)
	\end{aligned}
	\end{align}
\end{thm}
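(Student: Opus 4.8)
The plan is to deduce Theorem~\ref{RR-general} directly from Theorem~\ref{RR} by a homogenization in a second parameter, exactly the move that earlier upgraded Theorem~\ref{3R3} to Theorem~\ref{general}. The single-parameter identities \eqref{RR1} and \eqref{RR2} already carry a free variable $\theta$; introducing $\xi$ and writing $\theta = \eta/\xi$ trades the inhomogeneous argument $a - \alpha\theta$ for the homogeneous pair $(a\xi - \alpha\eta,\, b\xi - \beta\eta)$, which is precisely what appears on the right of \eqref{RR1-general} and \eqref{RR2-general}.

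First I would assume $\xi \neq 0$ and substitute $\theta = \eta/\xi$ into \eqref{RR1}, then multiply both sides by $\xi^{\lfloor n/2 \rfloor - k}$. On the left, each summand carries $\xi^{\lfloor n/2 \rfloor - k}(\eta/\xi)^{r-k} = \xi^{\lfloor n/2 \rfloor - r}\eta^{r-k}$, which is exactly the weight appearing in \eqref{RR1-general}, so the left-hand side already matches. On the right, I would invoke the homogeneity relation \eqref{Wcommon3} of Theorem~\ref{Wcommon} with $\lambda = \xi$ applied to the top row: since $\xi\big(a - \alpha\tfrac{\eta}{\xi}\big) = a\xi - \alpha\eta$ and $\xi\big(b - \beta\tfrac{\eta}{\xi}\big) = b\xi - \beta\eta$, the relation pulls out precisely the factor $\xi^{\lfloor n/2 \rfloor - k}$, turning $\xi^{\lfloor n/2 \rfloor - k}\,\Psi\!\left(\begin{array}{cc|c} a - \alpha\frac{\eta}{\xi} & b - \beta\frac{\eta}{\xi} & n \\ \alpha & \beta & k \end{array}\right)$ into $\Psi\!\left(\begin{array}{cc|c} a\xi - \alpha\eta & b\xi - \beta\eta & n \\ \alpha & \beta & k \end{array}\right)$. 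This establishes \eqref{RR1-general} whenever $\xi \neq 0$.

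To remove the restriction $\xi \neq 0$, I would observe that, with all other parameters fixed, both sides of \eqref{RR1-general} are polynomials in $\xi$ and $\eta$; since they agree on the infinitely many values $\xi \neq 0$, they agree identically, and in particular at $\xi = 0$. The $\Phi$ identity \eqref{RR2-general} then follows by the same three steps applied to \eqref{RR2}, now multiplying by $\xi^{\lfloor (n-1)/2 \rfloor - k}$ and using \eqref{Wcommon4} in place of \eqref{Wcommon3}, with $\lfloor (n-1)/2 \rfloor$ everywhere replacing $\lfloor n/2 \rfloor$.

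The computation is essentially bookkeeping, so there is no deep obstacle; the one place that demands care is the exponent arithmetic, namely confirming that the power of $\xi$ extracted on the right by \eqref{Wcommon3} (respectively \eqref{Wcommon4}) is exactly the power $\xi^{\lfloor n/2 \rfloor - k}$ (respectively $\xi^{\lfloor (n-1)/2 \rfloor - k}$) by which I multiplied, so that the two effects cancel and no stray power of $\xi$ survives. The only conceptual point is the passage from $\xi \neq 0$ to $\xi = 0$, which the polynomial-identity principle settles cleanly.
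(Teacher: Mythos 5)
Your proposal is correct and is essentially the paper's own argument: the paper likewise sets $\theta=\eta/\xi$ in \eqref{RR1} and \eqref{RR2}, multiplies by $\xi^{\lfloor n/2\rfloor-k}$ and $\xi^{\lfloor (n-1)/2\rfloor-k}$ respectively, and absorbs those factors via the homogeneity relations \eqref{Wcommon3} and \eqref{Wcommon4}. Your explicit appeal to the polynomial-identity principle to cover $\xi=0$ is a small extra justification the paper compresses into ``without loss of generality, let $\xi\neq 0$.''
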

\section{SPECIFIC FORMULAS FOR $\Psi$ AND $\Phi$ POLYNOMIALS }
Now, put $\xi = \alpha x^2 +\beta xy + \alpha y^2$ and $ \eta = a x^2 +bxy +a y^2$ in Theorem \eqref{general},  we obtain
\begin{equation}
\begin{aligned}
\label{W1}
\sum_{r=0}^{\lfloor{\frac{n}{2}}\rfloor}  \Psi\left( \begin{array}{cc|r} a & b & n \\ \alpha & \beta & r \end{array} \right) & (\alpha x^2 +\beta xy + \alpha y^2)^{\lfloor{\frac{n}{2}}\rfloor  - r}  (  a x^2 +bxy +a y^2 )^r \\
& = (  \beta a - \alpha b  )^{\lfloor{\frac{n}{2}}\rfloor} \Psi(xy,-x^2-y^2, n), \\
\end{aligned}
\end{equation}
\begin{equation}
\begin{aligned}
\label{W2}
\sum_{r=0}^{\lfloor{\frac{n-1}{2}}\rfloor}  \Phi\left( \begin{array}{cc|r} a & b & n \\ \alpha & \beta & r \end{array} \right) &  (\alpha x^2 +\beta xy + \alpha y^2)^{\lfloor{\frac{n-1}{2}}\rfloor  - r}  (  a x^2 +bxy +a y^2 )^r \\
&=(  \beta a - \alpha b  )^{\lfloor{\frac{n-1}{2}}\rfloor} \Phi(xy,-x^2-y^2, n).
\end{aligned}
\end{equation}
Now, from \eqref{special AAA}, and  \eqref{special BBB}, we get
\begin{equation}
\begin{aligned}
\label{W3}
\sum_{r=0}^{\lfloor{\frac{n}{2}}\rfloor}  \Psi\left( \begin{array}{cc|r} a & b & n \\ \alpha & \beta & r \end{array} \right) & (\alpha x^2 +\beta xy + \alpha y^2)^{\lfloor{\frac{n}{2}}\rfloor  - r}  (  a x^2 +bxy +a y^2 )^r \\
  &= (  \beta a - \alpha b  )^{\lfloor{\frac{n}{2}}\rfloor}\frac{x^n+y^n}{(x+y)^{\delta(n)}} ,
\end{aligned}
\end{equation}
and
\begin{equation}
\begin{aligned}
\label{W4}
\sum_{r=0}^{\lfloor{\frac{n-1}{2}}\rfloor}  \Phi\left( \begin{array}{cc|r} a & b & n \\ \alpha & \beta & r \end{array} \right) &  (\alpha x^2 +\beta xy + \alpha y^2)^{\lfloor{\frac{n-1}{2}}\rfloor  - r} (  a x^2 +bxy +a y^2 )^r \\
 &=(  \beta a - \alpha b  )^{\lfloor{\frac{n-1}{2}}\rfloor} \frac{x^n-y^n}{(x-y) (x+y)^{\delta(n-1)}}
\end{aligned}
\end{equation}
From \eqref{W1}, \eqref{W3}, we get formula \eqref{formula1} for $\Psi$ polynomial, and from \eqref{W2}, and \eqref{W4}, we get formula \eqref{formula2} for $\Phi$ polynomial.

\begin{thm}
	\label{formula}
	For any natural number $n$, the $\Psi$ and $\Phi$ polynomials satisfy the following polynomial identities
	\begin{equation}
	\label{formula1}
	\begin{aligned}
	\Psi(xy,-x^2-y^2,n) &= \frac{x^n+y^n}{(x+y)^{\delta(n)}},   \\
	\end{aligned}
	\end{equation}
	\begin{equation}
	\begin{aligned}
	\label{formula2}	
	\qquad	\qquad 		\Phi(xy,-x^2-y^2,n) &= \frac{x^n-y^n}{(x-y) (x+y)^{\delta(n-1)}}
	\end{aligned}
	\end{equation}
\end{thm}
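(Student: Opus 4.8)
The plan is to obtain \eqref{formula1} and \eqref{formula2} by equating two evaluations of one and the same polynomial sum, both of which are already in hand from the discussion preceding the theorem. Fix $a,b,\alpha,\beta$ with $\beta a - \alpha b \neq 0$ and abbreviate $q_1 := \alpha x^2 + \beta xy + \alpha y^2$ and $q_2 := ax^2 + bxy + ay^2$, writing $\Psi_r(n)$ for the coefficients as in the Notation section. The first evaluation is the defining expansion \eqref{special AAA} of Theorem \ref{special AA}, which says $\sum_{r=0}^{\lfloor n/2\rfloor}\Psi_r(n)\, q_1^{\lfloor n/2\rfloor - r} q_2^{r} = (\beta a - \alpha b)^{\lfloor n/2\rfloor}\,\frac{x^n+y^n}{(x+y)^{\delta(n)}}$, which is exactly \eqref{W3}.

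The second evaluation comes from specializing Theorem \ref{general} at $\xi = q_1$ and $\eta = q_2$. First I would record the two elementary computations $a q_1 - \alpha q_2 = (\beta a - \alpha b)\,xy$ and $b q_1 - \beta q_2 = -(\beta a - \alpha b)(x^2+y^2)$, which fall out immediately once the binary forms are expanded. Substituting these into the right-hand side of Theorem \ref{general} and then extracting the common factor via the homogeneity relation \eqref{common factor1}, namely $\lambda^{\lfloor n/2\rfloor}\Psi(c,d,n) = \Psi(\lambda c,\lambda d,n)$ with $\lambda = \beta a - \alpha b$, rewrites the same sum as $(\beta a - \alpha b)^{\lfloor n/2\rfloor}\,\Psi(xy,-x^2-y^2,n)$; this is \eqref{W1}.

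Since \eqref{W1} and \eqref{W3} have identical left-hand sides, their right-hand sides must agree, and because $\beta a - \alpha b \neq 0$ the factor $(\beta a - \alpha b)^{\lfloor n/2\rfloor}$ cancels, yielding \eqref{formula1}. The $\Phi$-case \eqref{formula2} is entirely parallel: I would pair \eqref{special BBB} with the $\Phi$-half of Theorem \ref{general}, use \eqref{common factor2} in place of \eqref{common factor1}, and cancel $(\beta a - \alpha b)^{\lfloor (n-1)/2\rfloor}$. A pleasant feature worth flagging is that the target identities involve only $x$ and $y$ — the parameters $a,b,\alpha,\beta$ have disappeared entirely — so establishing them for a single admissible choice with $\beta a - \alpha b \neq 0$ already certifies them as polynomial identities in $x,y$, hence for all values.

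The only step requiring genuine care is the second evaluation: one must check that the linear combinations $a q_1 - \alpha q_2$ and $b q_1 - \beta q_2$ produce precisely the arguments $xy$ and $-(x^2+y^2)$ scaled by $\beta a - \alpha b$, so that the homogeneity relation extracts exactly the power $(\beta a - \alpha b)^{\lfloor n/2\rfloor}$ matching the factor in \eqref{W3}. As an independent check I would substitute $a = xy$, $b = -x^2-y^2$ directly into the closed forms of Theorem \ref{BASE}: there $2a-b = (x+y)^2$ and $\frac{b+2a}{b-2a} = \frac{(x-y)^2}{(x+y)^2}$, so $\sqrt{(b+2a)/(b-2a)} = \frac{x-y}{x+y}$, giving $1+\sqrt{(b+2a)/(b-2a)} = \frac{2x}{x+y}$ and $1-\sqrt{(b+2a)/(b-2a)} = \frac{2y}{x+y}$; the stated formulas for $\Psi$ and $\Phi$ then collapse to $\frac{x^n+y^n}{(x+y)^{\delta(n)}}$ and $\frac{x^n-y^n}{(x-y)(x+y)^{\delta(n-1)}}$ after using $2\lfloor n/2\rfloor = n-\delta(n)$.
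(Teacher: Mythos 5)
Your argument is correct and follows essentially the same route as the paper: specialize Theorem \ref{general} at $\xi = \alpha x^2+\beta xy+\alpha y^2$, $\eta = ax^2+bxy+ay^2$, use the homogeneity relations \eqref{common factor1}--\eqref{common factor2} to extract $(\beta a-\alpha b)^{\lfloor n/2\rfloor}$, compare with the expansions \eqref{special AAA}--\eqref{special BBB}, and cancel the nonzero factor. Your supplementary verification via the closed forms of Theorem \ref{BASE} (where $2a-b=(x+y)^2$ and the square root collapses to $\frac{x-y}{x+y}$) is also correct and is a worthwhile independent check, but the main line of reasoning is the paper's own.
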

 \section{FORMULA TO COMPUTE $ \Phi_r(n) $  DIRECTLY FROM $\Psi_r(n)$  }
 First, let \[ \Psi_r(n) :=\Psi\left( \begin{array}{cc|r} a & b & n \\ \alpha & \beta & r \end{array} \right) \quad , \quad  \Phi_r(n) :=\Phi\left( \begin{array}{cc|r} a & b & n \\ \alpha & \beta & r \end{array} \right).\]
  By differentiating ~\eqref{special AAA} with respect to the differential map
 \[ a \longrightarrow 0 ,  b \longrightarrow 0, \alpha \longrightarrow 0 ,  \beta  \longrightarrow 0,x \longrightarrow 1, y \longrightarrow -1,  \]
 and noting that with this differentiation we have
  \begin{align*}
 (\beta a - \alpha b)^{\lfloor{\frac{n}{2}}\rfloor} & \longrightarrow 0,  &  \Psi\left( \begin{array}{cc|r} a & b & n \\ \alpha & \beta & r \end{array} \right)   &\longrightarrow 0,   \\
  x^n + y^n       & \longrightarrow  n(x^{n-1} - y^{n-1}), & (x+y)^{\delta(n)} \longrightarrow  0, \\
    ax^2 +b xy +  ay^2    &\longrightarrow  (2a-b)(x-y),  &\alpha x^2 +\beta xy + \alpha y^2    &\longrightarrow  (2\alpha-\beta)(x-y),
 \end{align*}
 we get the following polynomial identity
  \begin{equation}
  \label{sahar1}
  \begin{aligned}
 &(\beta a - \alpha b)^{\lfloor{\frac{n}{2}}\rfloor} n \frac{(x^{n-1}-y^{n-1})}{(x+y)^{\delta(n)}}  = \\
  &\sum_{r=0}^{\lfloor{\frac{n}{2}}\rfloor - 1} (2\alpha-\beta) \Psi_r(n) (\lfloor{\frac{n}{2}}\rfloor -r)
 (\alpha x^2 + \beta xy + \alpha y^2)^{\lfloor{\frac{n}{2}}\rfloor -r -1} (ax^2+bxy+ay^{2})^{r} (x-y)\\
 + &\sum_{r=1}^{\lfloor{\frac{n}{2}}\rfloor} (2a-b) \Psi_r(n) (r)(\alpha x^2 + \beta xy + \alpha y^2)^{\lfloor{\frac{n}{2}}\rfloor -r} (ax^2+bxy+ay^{2})^{r-1} (x-y).
   \end{aligned}
  \end{equation}
 Raising each $n$ by one, and noting that $  (\beta a - \alpha b)^{\lfloor{\frac{n+1}{2}}\rfloor} =  (\beta a - \alpha b)^{\lfloor{\frac{n-1}{2}}\rfloor}  (\beta a - \alpha b), \lfloor{\frac{n+1}{2}}\rfloor -1 =  \lfloor{\frac{n-1}{2}}\rfloor, \delta(n+1)=\delta(n-1)$, and then dividing by $(n+1)(x-y)$, we get the following formula
   \begin{equation}
 \label{sahar2}
 \begin{aligned}
 &(\beta a - \alpha b)^{\lfloor{\frac{n-1}{2}}\rfloor} \frac{(x^{n}-y^{n})}{(x-y)(x+y)^{\delta(n)}}  = \\
 &\sum_{r=0}^{\lfloor{\frac{n-1}{2}}\rfloor} \frac{(2\alpha -\beta)( \lfloor{\frac{n+1}{2}}\rfloor     -r) \Psi_r(n+1) }{(\beta a - \alpha b)(n+1)}
 (\alpha x^2 + \beta xy + \alpha y^2)^{\lfloor{\frac{n-1}{2}}\rfloor -r } (ax^2+bxy+ay^{2})^{r} \\
 +&\sum_{r=0}^{\lfloor{\frac{n-1}{2}}\rfloor} \frac{ (2a-b)(r+1) \Psi_{r+1}(n+1) }{(\beta a - \alpha b)(n+1)}
 (\alpha x^2 + \beta xy + \alpha y^2)^{\lfloor{\frac{n-1}{2}}\rfloor -r } (ax^2+bxy+ay^{2})^{r}.
 \end{aligned}
 \end{equation}
 Now, comparing \eqref{sahar2} with \eqref{special BBB}, we immediately get the following desirable formula.
  \begin{thm}
 	\label{BBB from AAA }
 	For any natural number $n$, $\beta a - \alpha b \neq 0 $,
 	the following formula is true
 	\begin{equation}
 	\label{BBB-AAA}	
 	 	\Phi_r(n) = \frac{(2\alpha -\beta)( \lfloor{\frac{n+1}{2}}\rfloor     -r) \Psi_r(n+1) + (2a-b)(r+1) \Psi_{r+1}(n+1)          }{(\beta a - \alpha b)(n+1)}
 	\end{equation}
 \end{thm}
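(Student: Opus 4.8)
The plan is to derive \eqref{BBB-AAA} by differentiating the $\Psi$-expansion \eqref{special AAA} with a single well-chosen differential map and then matching the outcome against the $\Phi$-expansion \eqref{special BBB} coefficient by coefficient. Writing $q_1 = \alpha x^2 + \beta xy + \alpha y^2$ and $q_2 = a x^2 + b xy + a y^2$, the map I would use sends $a,b,\alpha,\beta \to 0$, $x \to 1$, $y \to -1$, that is, the operator $\frac{\partial}{\partial x} - \frac{\partial}{\partial y}$. The virtue of this choice is that every $x,y$-free factor is annihilated: both $(\beta a - \alpha b)^{\lfloor\frac{n}{2}\rfloor}$ and each coefficient $\Psi_r(n)$ are constants for this operator, so by the Leibniz rule only the $x,y$-dependent factors contribute. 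One computes directly that $q_2 \mapsto (2a-b)(x-y)$, $q_1 \mapsto (2\alpha-\beta)(x-y)$, and $x^n+y^n \mapsto n(x^{n-1}-y^{n-1})$, while $(x+y)^{\delta(n)} \mapsto 0$ in both parities. Inserting these into the Leibniz expansion of \eqref{special AAA} produces \eqref{sahar1}.

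Next I would reshape \eqref{sahar1} into the form of \eqref{special BBB}. Differentiating $q_1^{\lfloor\frac{n}{2}\rfloor - r} q_2^r$ yields one contribution proportional to $q_1^{\lfloor\frac{n}{2}\rfloor - r - 1} q_2^r$ and one proportional to $q_1^{\lfloor\frac{n}{2}\rfloor - r} q_2^{r-1}$; re-indexing the second sum by $r \mapsto r+1$ collects both onto the common monomial $q_1^{\lfloor\frac{n}{2}\rfloor - r - 1} q_2^r$, and the common factor $(x-y)$ divides out. I would then substitute $n \to n+1$ and apply the elementary identities $\lfloor\frac{n+1}{2}\rfloor - 1 = \lfloor\frac{n-1}{2}\rfloor$, $(\beta a - \alpha b)^{\lfloor\frac{n+1}{2}\rfloor} = (\beta a - \alpha b)^{\lfloor\frac{n-1}{2}\rfloor}(\beta a - \alpha b)$ and $\delta(n+1) = \delta(n-1)$, and divide through by $(n+1)(\beta a - \alpha b)$. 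This gives \eqref{sahar2}, whose left-hand side is exactly the quantity $(\beta a - \alpha b)^{\lfloor\frac{n-1}{2}\rfloor}\frac{x^n - y^n}{(x-y)(x+y)^{\delta(n-1)}}$ expanded in \eqref{special BBB}. Because $\beta a - \alpha b \neq 0$, Lemma \ref{SYMMETRIC2} tells us $q_1$ and $q_2$ are algebraically independent, so the two expansions of the same polynomial in powers of $q_1$ and $q_2$ must agree coefficient by coefficient; equating the coefficient of $q_1^{\lfloor\frac{n-1}{2}\rfloor - r} q_2^r$ in \eqref{sahar2} with $\Phi_r(n)$ gives \eqref{BBB-AAA}.

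I expect the principal difficulty to be bookkeeping rather than anything conceptual. The delicate part is keeping the index shifts and floor functions consistent when passing from \eqref{sahar1}, indexed up to $\lfloor\frac{n}{2}\rfloor$, to \eqref{sahar2}, indexed up to $\lfloor\frac{n-1}{2}\rfloor$ after the replacement $n \to n+1$, so that the weight $(\lfloor\frac{n+1}{2}\rfloor - r)$ attaches to $\Psi_r(n+1)$ and the weight $(r+1)$ attaches to $\Psi_{r+1}(n+1)$. It is also worth checking explicitly that $\frac{\partial}{\partial x} - \frac{\partial}{\partial y}$ annihilates $(x+y)^{\delta(n)}$ in the odd case and that the single factor $(x-y)$ introduced by the differentiation cancels cleanly; once these are in hand, the rest is a routine Leibniz computation combined with the algebraic independence already furnished by Lemma \ref{SYMMETRIC2}.
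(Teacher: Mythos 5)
Your proposal is correct and follows essentially the same route as the paper: differentiate \eqref{special AAA} by the map sending $x\to 1$, $y\to -1$ and all of $a,b,\alpha,\beta\to 0$ to obtain \eqref{sahar1}, then shift $n\to n+1$, divide by $(n+1)(x-y)$, and compare the resulting expansion \eqref{sahar2} with \eqref{special BBB}. Your explicit appeal to Lemma \ref{SYMMETRIC2} to justify equating coefficients is a small (and welcome) addition the paper leaves implicit.
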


\section{THE $\Psi$ AND $\Phi$ TRAJECTORIES AND ORBITS OF ORDER $n$}
From Theorems \eqref{BASE}, \eqref{BASE3}, \eqref{Wcommon}, we know that for any numbers $a,b,\alpha, \beta$, the following relations are true for any natural number $n$
	\begin{equation*}
\label{orbits1}
\begin{aligned}
\Psi_0(n)  &= \Psi\left( \begin{array}{cc|r} a & b & n \\ \alpha & \beta & 0 \end{array} \right) = \Psi(a,b,n),   \\
 \Psi_{\lfloor{\frac{n}{2}}\rfloor}(n)&=  \Psi\left( \begin{array}{cc|c} a & b & n \\ \alpha & \beta & \lfloor{\frac{n}{2}}\rfloor \end{array} \right) = (-1)^{\lfloor{\frac{n}{2}}\rfloor} \Psi(\alpha,\beta,n) = \Psi(-\alpha,-\beta,n), \\ 
 \Phi_0(n)&= \Phi\left( \begin{array}{cc|r} a & b & n \\ \alpha & \beta & 0 \end{array} \right) = \Phi(a,b,n),  \\
 \Phi_{\lfloor{\frac{n-1}{2}}\rfloor}(n)&=  \Phi\left( \begin{array}{cc|c} a & b & n \\ \alpha & \beta & \lfloor{\frac{n-1}{2}}\rfloor \end{array}\right) =(-1)^{\lfloor{\frac{n-1}{2}}\rfloor} \Phi(\alpha,\beta,n) = \Phi(-\alpha,-\beta,n).
\end{aligned}
\end{equation*}

Therefore, Theorems \eqref{special AA}, \eqref{BASE}, \eqref{orbits} should motivate us define the trajectories which are constructed from the sequence of polynomials $\Psi_r(n)$ and $\Phi_r(n)$ which connects $\Psi(a,b,n)$ with $\Psi(-\alpha,-\beta,n)$ and $\Phi(a,b,n)$ with $\Phi(-\alpha,-\beta,n)$ respectively. 

\begin{defn}
	Let $a,b, \alpha, \beta$ be given real numbers, $\beta a - \alpha b \neq 0$,  and \[ \Psi_r(n) := \Psi\left( \begin{array}{cc|r} a & b & n \\ \alpha & \beta & r \end{array} \right) \quad , \quad \Phi_r(n) := \Phi\left( \begin{array}{cc|r} a & b & n \\ \alpha & \beta & r \end{array} \right).\] We call the sequence of polynomials  
	\begin{equation}
	\label{orbits1}
\Psi_0(n), \Psi_1(n), \Psi_2(n), \dots , \Psi_{\lfloor{\frac{n}{2}}\rfloor}(n) 
	\end{equation}
the $\Psi$ trajectory of order $n$ from $(a,b)$ to $(\alpha,\beta)$.  Alternatively, we call that the $\Psi$ trajectory \say{connect}  $\Psi(a,b,n)$ with $\Psi(-\alpha,-\beta,n)$. When  $\Psi(-\alpha,-\beta,n)=\Psi(a,b,n)$ for given natural number $n$ then we call the $\Psi$ trajectory of order $n$ from $(a,b)$ to $(\alpha,\beta)$ by the $\Psi$ orbit of order $n$ from $(a,b)$ to $(\alpha,\beta)$.
Similarly, we call the sequence of polynomials  
	\begin{equation}
	\label{orbits2}
\Phi_0(n), \Phi_1(n), \Phi_2(n), \dots , \Phi_{\lfloor{\frac{n-1}{2}}\rfloor}(n)  
	\end{equation}
the $\Phi$ trajectory of order $n$ from $(a,b)$ to $(\alpha,\beta)$.  Alternatively, we call that the $\Phi$ trajectory \say{connect}  $\Phi(a,b,n)$ with $\Phi(-\alpha,-\beta,n)$. When  $\Phi(-\alpha,-\beta,n)=\Phi(a,b,n)$ for given natural number $n$ then we call the $\Phi$ trajectory of order $n$ from $(a,b)$ to $(\alpha,\beta)$ by the $\Phi$ orbit of order $n$ from $(a,b)$ to $(\alpha,\beta)$. 
	\end{defn} 
\subsection*{MAJOR QUESTIONS ABOUT THE $\Psi$ AND $\Phi$ TRAJECTORIES AND ORBITS}
 Major focus of future research on the $\Psi$ and $\Phi$ trajectories and orbits are the following questions 
 \begin{enumerate}
	\item Does an efficient algorithms exist to identify whether a given $\Psi$ and $\Phi$ trajectory can be turned to an orbit ?
	\item What are the common arithmetical characteristics of the terms of the $\Psi$ and $\Phi$ orbit?
	\item  What are the common properties of the prime factors of the terms of $\Psi$ and $\Phi$ orbit?
		\item Does the arithmetic of the integer terms of the Fermat orbit help finding an efficient algorithms to factorize Fermat numbers ? 
	
\end{enumerate}

\makeatletter
\define@key{cylindricalkeys}{angle}{\def\myangle{#1}}
\define@key{cylindricalkeys}{radius}{\def\myradius{#1}}
\define@key{cylindricalkeys}{z}{\def\myz{#1}}
\tikzdeclarecoordinatesystem{cylindrical}%
{%
	\setkeys{cylindricalkeys}{#1}%
	\pgfpointadd{\pgfpointxyz{0}{0}{\myz}}{\pgfpointpolarxy{\myangle}{\myradius}}
}
\begin{tikzpicture}[z=0.2pt]
\node (a) at (2.2,5.5) {$\Psi(a,b,n)$};
\node (a) at (5.5,4.759) {$\Psi(-\alpha,-\beta,n)$};
\filldraw [] 
(2.3,5.1) circle (4pt);
\filldraw [] 
(5.4,5.1)  circle (4pt);
\foreach \num in {500,510,...,950}
\fill (cylindrical cs:angle=\num,radius=2,z=\num) circle (1.4pt);
\node [right=1cm,text width=15cm,font=\footnotesize] 
{ The notions of $\Psi$ and $\Phi$ trajectories arose up naturally in this paper. Many well-known sequences are now not isolated; meaning we can connect them by a certain trajectory.  When the endpoint $\Psi(-\alpha,-\beta,n)$ coincidences with starting point $\Psi(a,b,n)$, then the trajectory gives the $\Psi$ orbit. Also, when the endpoint $\Phi(-\alpha,-\beta,n)$ coincidences with starting point $\Phi(a,b,n)$, then the trajectory gives the $\Phi$ orbit. These trajectories and orbits might be a new area of research in the future. 
};
\tikzdeclarecoordinatesystem{cylindrical}%
{%
	\setkeys{cylindricalkeys}{#1}%
	\pgfpointadd{\pgfpointxyz{2.5}{-5}{\myz}}{\pgfpointpolarxy{\myangle}{\myradius}}
}
\node (a) at (12,3.3) {$\Phi(a,b,n)$};
\node (a) at (12.7,7.5) {$\Phi(-\alpha,-\beta,n)$};
\filldraw [] 
(12.25,3.8) circle (4pt);
\filldraw [] 
(14,7.3) circle (4pt);

\foreach \num in {1120,1130,...,1500}
\fill (cylindrical cs:angle=\num,radius=2,z=\num) circle (1.4pt);
\node [right=1cm,text width=12cm,font=\footnotesize] 
{ 
};
\end{tikzpicture}

From the relations \eqref{BASE1}, \eqref{BASE2} of Theorem \eqref{BASE}, we should have no difficulties to prove the following relations of the following theorems.
\begin{thm}
	\label{important}
	For any real numbers $a,b$, $a \neq \pm b$, and any natural number $n$, the following identities are true
		\begin{equation*}
	\label{important1}
	\begin{aligned}
	\Psi(a,-b,n)&=\Psi(-a,-b,n) \qquad   \mbox{for $n$ even}       \\
	 \Psi(a,-b,n)&= \Phi(-a,-b,n)  \qquad  \mbox{for $n$ odd}       \\
	 \Phi(a,-b,n)&=\Phi(-a,-b,n)    \qquad    \mbox{for $n$ even}         \\
	 \Phi(a,-b,n)&= \Psi(-a,-b,n) \qquad \mbox {for $n$ odd}  \\
	\end{aligned}
	\end{equation*}
\end{thm}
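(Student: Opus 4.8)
The plan is to substitute directly into the closed forms \eqref{BASE1} and \eqref{BASE2}, exploiting one structural observation: holding the second argument fixed at $-b$ and replacing the first argument $a$ by $-a$ inverts the quantity under the square root. Writing $A$ for the first argument and $B=-b$ for the second, the radicand $\frac{B+2A}{B-2A}$ equals $-\frac{2a-b}{2a+b}$ when $A=a$ and $-\frac{2a+b}{2a-b}$ when $A=-a$, and these are reciprocals. Hence, setting $t:=\sqrt{-\frac{2a-b}{2a+b}}$, the radical attached to $\Psi(a,-b,n)$ and $\Phi(a,-b,n)$ is $t$, while the radical attached to $\Psi(-a,-b,n)$ and $\Phi(-a,-b,n)$ is $1/t$.

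The crux is then the elementary parity identity
\[(1+1/t)^n \pm (1-1/t)^n = t^{-n}\bigl[(1+t)^n \pm (-1)^n(1-t)^n\bigr],\]
which follows from $1\pm 1/t=(t\pm1)/t$ and $(t-1)^n=(-1)^n(1-t)^n$. For $n$ even this sends a sum of powers of $1\pm1/t$ to a sum of powers of $1\pm t$ and a difference to a difference; for $n$ odd it interchanges sums and differences. Since \eqref{BASE1} is built from the sum $\bigl(1+\sqrt{\cdot}\bigr)^n+\bigl(1-\sqrt{\cdot}\bigr)^n$ and \eqref{BASE2} from the corresponding difference, this interchange is exactly what turns $\Psi$ into $\Phi$ (and back) precisely for odd $n$, while leaving the kind unchanged for even $n$, which matches the four asserted cases.

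The remaining work is bookkeeping of the prefactors. Using $t^2=-\frac{2a-b}{2a+b}$, hence $2a-b=-t^2(2a+b)$, the factor $(2A-B)^{\lfloor n/2\rfloor}=(-(2a-b))^{\lfloor n/2\rfloor}$ (resp. the $\lfloor\frac{n-1}{2}\rfloor$-power for $\Phi$) occurring in the $(-a,-b)$ evaluation, combined with the $t^{-n}$ (resp. $t\cdot t^{-n}$) produced by the parity identity, collapses to the prefactor $(2a+b)^{\lfloor n/2\rfloor}$ of the $(a,-b)$ evaluation, the powers of $(-1)$ and the floor exponents reconciling once one splits into $n=2m$ and $n=2m+1$. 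Carrying this out in the four cases yields the four identities. I expect the only genuine obstacle to be sign/branch discipline: the closed forms contain $\sqrt{\,\cdot\,}$ with a possibly negative radicand, so I would treat $t$ as a formal symbol with $t^2$ fixed — legitimate because, by Theorem \eqref{special EE}, $\Psi$ and $\Phi$ are honest polynomials in their first two arguments, so every apparent square root cancels and the outcome is a polynomial identity valid for all real $a,b$ (in particular for $a\neq\pm b$), the nonvanishing conditions being needed only on the generic set where the manipulation is literally carried out.

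Finally, I would note an essentially calculation-free alternative via \eqref{formula1} and \eqref{formula2}. Parametrize $a=xy$ and $b=x^2+y^2$, so that $\Psi(a,-b,n)=\frac{x^n+y^n}{(x+y)^{\delta(n)}}$ and $\Phi(a,-b,n)=\frac{x^n-y^n}{(x-y)(x+y)^{\delta(n-1)}}$. The substitution $(x,y)\mapsto(x,-y)$ fixes $x^2+y^2$ but negates $xy$, hence sends $(a,-b)$ to $(-a,-b)$; replacing $y$ by $-y$ and using $(-y)^n=(-1)^n y^n$ together with $\delta(n)+\delta(n-1)=1$ reproduces all four relations directly, the parity of $n$ again deciding whether a sum of $n$-th powers stays a sum or becomes a difference. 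As $\Psi,\Phi$ are polynomial in $(a,b)$, regarding $x,y$ as formal variables makes this valid for all real $a,b$ with $a\neq\pm b$.
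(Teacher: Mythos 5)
Your main argument is correct and is exactly the route the paper intends: the paper offers no written proof beyond the remark that the identities follow from the closed forms \eqref{BASE1}, \eqref{BASE2} of Theorem \eqref{BASE}, and your substitution $A=\pm a$, $B=-b$, the observation that the two radicands are reciprocals, the parity identity $(1+1/t)^n\pm(1-1/t)^n=t^{-n}\bigl[(1+t)^n\pm(-1)^n(1-t)^n\bigr]$, and the prefactor bookkeeping (which I checked in the cases $n=2m$ and $n=2m+1$) supply precisely the missing details; your remark that the square roots are only formal because $\Psi,\Phi$ are polynomials by Theorem \eqref{special EE} correctly disposes of the branch issue. Your second, alternative route via Theorem \eqref{formula} --- parametrizing $a=xy$, $b=x^2+y^2$ and applying $y\mapsto -y$, which negates $a$, fixes $b$, and flips sums of $n$-th powers to differences exactly when $n$ is odd --- is genuinely different from anything in the paper and is arguably the cleaner proof: it needs no radicals and no case-by-case prefactor reconciliation, at the cost of one extra observation, namely that the map $(x,y)\mapsto(xy,x^2+y^2)$ has Zariski-dense image so that the resulting polynomial identity in $(a,b)$ holds identically. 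Either route fully establishes the theorem.
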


\begin{thm}
	\label{important2}
	For any real numbers $a,b$, $a \neq \pm b$, and any natural number $n$, the following identity is true
	\begin{equation*}
	\label{important3}
	\begin{aligned}
	\Phi(a,b,2n)&= \Phi(a,b,n) \Psi(a,b,n) \qquad  
	\end{aligned}
	\end{equation*}
\end{thm}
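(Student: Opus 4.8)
The plan is to reduce the claim to a transparent algebraic identity by means of the substitution $a = xy$, $b = -x^2 - y^2$, under which Theorem~\ref{formula} evaluates both $\Psi$ and $\Phi$ in closed form. First I would record that both sides of the asserted identity are genuine polynomials in $a,b$: this is guaranteed by the explicit expansions \eqref{P1} and \eqref{P2} of Theorem~\ref{special EE}. Consequently it suffices to verify the identity on a Zariski-dense subset of the $(a,b)$-plane and then invoke polynomial extension.

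Next I would perform the substitution, noting that every pair $(a,b)$ arises from it: setting $x+y = \sqrt{2a-b}$ and $x-y = \sqrt{-(2a+b)}$ yields $xy = a$ and $x^2+y^2 = -b$, so the map $(x,y)\mapsto(xy,-x^2-y^2)$ is dominant (indeed surjective over $\C$). On the open locus where $x+y \neq 0$ and $x-y \neq 0$ (equivalently $2a-b\neq 0$ and $2a+b\neq 0$), formulas \eqref{formula1} and \eqref{formula2} give
\begin{align*}
\Phi(a,b,n)\,\Psi(a,b,n) &= \frac{x^n - y^n}{(x-y)(x+y)^{\delta(n-1)}}\cdot \frac{x^n + y^n}{(x+y)^{\delta(n)}} = \frac{x^{2n}-y^{2n}}{(x-y)(x+y)^{\delta(n-1)+\delta(n)}},\\
\Phi(a,b,2n) &= \frac{x^{2n}-y^{2n}}{(x-y)(x+y)^{\delta(2n-1)}}.
\end{align*}

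The only arithmetic to check is the bookkeeping of the exponents of $(x+y)$: since exactly one of $n$ and $n-1$ is odd, $\delta(n-1)+\delta(n) = 1$, while $2n-1$ is always odd, so $\delta(2n-1) = 1$ as well. Hence the two displayed expressions coincide, and the product formula $\Phi(a,b,2n) = \Phi(a,b,n)\,\Psi(a,b,n)$ holds on this dense open locus. Since both sides are polynomials in $a,b$ agreeing on a Zariski-dense set, they agree identically; this also covers the remaining points $2a = \pm b$.

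I do not expect a genuine obstacle here. The only delicate point is the presence of the denominators $(x-y)$ and $(x+y)^{\delta}$ in the rational substitution, which is precisely why I pass to a dense locus and then extend by polynomiality rather than attempting a naive pointwise verification everywhere. As a purely computational alternative one may instead insert the closed forms \eqref{BASE1} and \eqref{BASE2} directly: writing $s = \sqrt{(b+2a)/(b-2a)}$, $X = 1+s$, $Y = 1-s$, the product $\Phi\,\Psi$ collapses through $(X^n-Y^n)(X^n+Y^n) = X^{2n}-Y^{2n}$, while the power of $(2a-b)$ combines via $\lfloor \tfrac{n-1}{2}\rfloor + \lfloor \tfrac{n}{2}\rfloor = n-1 = \lfloor \tfrac{2n-1}{2}\rfloor$, reproducing $\Phi(a,b,2n)$ verbatim.
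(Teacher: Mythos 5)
Your proposal is correct, and its primary route is genuinely different from the one the paper indicates. The paper disposes of Theorem~\ref{important2} with a one-line remark that it follows from the closed forms \eqref{BASE1}, \eqref{BASE2} of Theorem~\ref{BASE} --- i.e., exactly the computation you relegate to your final ``purely computational alternative,'' where $(X^n-Y^n)(X^n+Y^n)=X^{2n}-Y^{2n}$ and $\lfloor\tfrac{n-1}{2}\rfloor+\lfloor\tfrac{n}{2}\rfloor=n-1=\lfloor\tfrac{2n-1}{2}\rfloor$ do all the work. Your main argument instead pulls the identity back along the dominant map $(x,y)\mapsto(xy,-x^2-y^2)$ and uses Theorem~\ref{formula} to turn both sides into the elementary factorization $x^{2n}-y^{2n}=(x^n-y^n)(x^n+y^n)$, with the exponent bookkeeping $\delta(n)+\delta(n-1)=1=\delta(2n-1)$. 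What your route buys is the complete avoidance of the radical $\sqrt{(b+2a)/(b-2a)}$ and any case analysis near $2a=\pm b$: since both sides are honest polynomials in $a,b$ by \eqref{P1} and \eqref{P2}, agreement on a Zariski-dense locus suffices, and the degenerate loci are absorbed automatically. What the paper's route buys is brevity and independence from Theorem~\ref{formula} (which sits later in the paper's logical development than Theorem~\ref{BASE}); but Theorem~\ref{formula} is itself derived from Theorem~\ref{general} and \eqref{special AAA}--\eqref{special BBB} without circularity, so your argument is sound as written. One cosmetic remark: the hypothesis $a\neq\pm b$ in the statement is not actually needed on either route, as your polynomial-extension step makes explicit.
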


\subsection*{EXAMPLES FOR TRAJECTORIES CONNECT WELL-KNOWN SEQUENCES}
We already proved in the current paper that the $\Psi$ and $\Phi$ polynomials unify many well-known polynomials and sequences. Now the notions of the $\Psi$ and $\Phi$ trajectories should be useful. 
			
		\subsection{The CHEBYSHEV-LUCAS TRAJECTORY} 

The $\Psi$ trajectory of order $n$ from $(1,2-4x^2)$ to $(1,3)$ starts with $\frac{2^{\delta(n+1)}}{x^{\delta(n)}}T_n(x)$ and ends with  $L(n)$. So this way we get new path connects Chebyshev polynomials of the first kind with Lucas sequences. The vision behind considering such trajectory is that this path should help to understand some characteristics of the starting party, the Chebyshev Polynomials, by knowing some characteristics of the final party, the Lucas numbers, and vice versa.\\

\makeatletter
\define@key{cylindricalkeys}{angle}{\def\myangle{#1}}
\define@key{cylindricalkeys}{radius}{\def\myradius{#1}}
\define@key{cylindricalkeys}{z}{\def\myz{#1}}
\tikzdeclarecoordinatesystem{cylindrical}%
{%
	\setkeys{cylindricalkeys}{#1}%
	\pgfpointadd{\pgfpointxyz{0}{0}{\myz}}{\pgfpointpolarxy{\myangle}{\myradius}}
}
\begin{tikzpicture}[z=0.2pt]
\node (a) at (1.9,1.1) {$L(n)$};
\node (b) at (1.5,4.4) {$\frac{2^{\delta(n+1)}}{x^{\delta(n)}}T_n(x)$};
\filldraw [] 
(2,4.8) circle (4pt);
\filldraw [] 
(1.9,1.5) circle (4pt);
\foreach \num in {45,52,...,490}
\fill (cylindrical cs:angle=\num,radius=2,z=\num) circle (1.4pt);
\node (c) at (4.2,2.2)  [right=1cm,text width=9cm,font=\footnotesize] 
{Studying this particular trajectory should help understand the arithmetic of Chebyshev polynomials of the first kind and Lucas numbers. For any natural number $n$, we call this path the Chebyshev-Lucas trajectory which is the sequence
	\[ \frac{2^{\delta(n+1)}}{x^{\delta(n)}}T_n(x) = \Psi_0(n), \Psi_1(n), \Psi_2(n), \dots, \Psi_{\lfloor{\frac{n}{2}}\rfloor}(n) =  L(n)  \]
	where $\Psi_r(n) = \Psi\left( \begin{array}{cc|r} 1 & 2-4x^2 & n \\ 1 & 3 & r \end{array} \right)$. Moreover, the terms $\Psi_r(n)$ satisfy the following polynomial identity 
	\[(1+4x^2)^{\lfloor{\frac{n}{2}}\rfloor}\frac{z^n+t^n}{(z+t)^{\delta(n)}} = \] 
	\[  \sum_{r=0}^{\lfloor{\frac{n}{2}}\rfloor}
	\Psi_r(n) ( z^2 + 3zt +  t^2)^{\lfloor{\frac{n}{2}}\rfloor -r} (z^2+   (2-4x^2)zt+t^{2})^{r}. \] 
};
\end{tikzpicture}
		\subsection{THE LUCAS-FIBONACCI TRAJECTORY} 
Let $n$ be any odd natural number. The $\Psi$ trajectory of order $n$ from $(-1,-3)$ to $(-1,+3)$ starts with $L(n)$ and ends with $F(n)$. So this way we get path connects Lucas numbers with Fibonacci numbers.

\makeatletter
\define@key{cylindricalkeys}{angle}{\def\myangle{#1}}
\define@key{cylindricalkeys}{radius}{\def\myradius{#1}}
\define@key{cylindricalkeys}{z}{\def\myz{#1}}
\tikzdeclarecoordinatesystem{cylindrical}%
{%
	\setkeys{cylindricalkeys}{#1}%
	\pgfpointadd{\pgfpointxyz{0}{0}{\myz}}{\pgfpointpolarxy{\myangle}{\myradius}}
}
\begin{tikzpicture}[z=0.2pt]
\node (a) at (-0.2,2.7) {$L(n)$};
\node (b) at (1.5,4.4) {$F(n)$};
\filldraw [] 
(2,4.8) circle (4pt);
\filldraw [] 
(-0.5,2.4) circle (4pt);
\foreach \num in {145,152,...,490}
\fill (cylindrical cs:angle=\num,radius=2,z=\num) circle (1.4pt);
\node (c) at (4.2,2.2)  [right=1cm,text width=9cm,font=\footnotesize] 
{Studying this particular trajectory should enjoy certain properties that comes from the arithmetic of Lucas numbers and Fibonacci. For any odd natural number $n$, we call this path the Lucas-Fibonacci trajectory which is the sequence
	\[L(n) = \Psi_0(n), \Psi_1(n), \Psi_2(n), \dots, \Psi_{\lfloor{\frac{n}{2}}\rfloor}(n) =  F(n)  \]
	where $\Psi_r(n) = \Psi\left( \begin{array}{cc|r} -1 & -3 & n \\ -1 & +3 & r \end{array} \right)$. Moreover, the terms $\Psi_r(n)$ satisfy the following polynomial identity 
	\[(-6)^{\lfloor{\frac{n}{2}}\rfloor}\frac{z^n+t^n}{(z+t)^{\delta(n)}} = \] 
	\[  \sum_{r=0}^{\lfloor{\frac{n}{2}}\rfloor}
	\Psi_r(n) ( -z^2 + 3zt - t^2)^{\lfloor{\frac{n}{2}}\rfloor -r} (-z^2 -3zt-t^{2})^{r}. \] 
};
\end{tikzpicture}

	\subsection{THE LUCAS ORBIT} 
Let $n$ be any even natural number. The $\Psi$ trajectory of order $n$ from $(-1,-3)$ to $(-1,+3)$ starts with $L(n)$ and ends with $L(n)$. So this way we get orbit. We call this orbit the Lucas orbit at level $n$.  \\

\makeatletter
\define@key{cylindricalkeys}{angle}{\def\myangle{#1}}
\define@key{cylindricalkeys}{radius}{\def\myradius{#1}}
\define@key{cylindricalkeys}{z}{\def\myz{#1}}
\tikzdeclarecoordinatesystem{cylindrical}%
{%
	\setkeys{cylindricalkeys}{#1}%
	\pgfpointadd{\pgfpointxyz{0}{0}{\myz}}{\pgfpointpolarxy{\myangle}{\myradius}}
}
\begin{tikzpicture}[z=0.1pt]
\node (a) at (6.1,1.32) {$L(n) $};
\filldraw [] 
(5.35,1.32) circle (4pt);
\foreach \num in {705,710,...,1020}
\fill (cylindrical cs:angle=\num,radius=3,z=\num) circle (1.4pt);
\node (b) at (5.6,3.0)  [right=0.7cm,text width=9cm,font=\footnotesize] 
{This particular orbit should enjoy unexpected properties that comes from the arithmetic of Lucas numbers. For any even natural number $n$, the Lucas orbit at level $n$ is the sequence
	\[L(n) = \Psi_0(n), \Psi_1(n), \Psi_2(n), \dots, \Psi_{\lfloor{\frac{n}{2}}\rfloor}(n) =  L(n)  \]
	where $\Psi_r(n) = \Psi\left( \begin{array}{cc|r} -1 & -3 & n \\ -1 & +3 & r \end{array} \right)$. Moreover, the terms $\Psi_r(n)$ satisfy the following polynomial identity 
	\[(-6)^{\lfloor{\frac{n}{2}}\rfloor}\frac{z^n+t^n}{(z+t)^{\delta(n)}} = \] 
	\[  \sum_{r=0}^{\lfloor{\frac{n}{2}}\rfloor}
	\Psi_r(n) ( -z^2 + 3zt - t^2)^{\lfloor{\frac{n}{2}}\rfloor -r} (-z^2 -3zt-t^{2})^{r}. \]};
\end{tikzpicture}

	\subsection{THE LUCAS-PELL TRAJECTORY} 
Let $n$ be any natural number. The $\Psi$ trajectory of order $n$ from  $(-1,-3)$ to $(1,6)$ starts with $L(n)$ and ends with $\frac{Q_n}{2^{\delta(n)}}$, where $Q_n$ are the Pell-Lucas numbers. \\

\makeatletter
\define@key{cylindricalkeys}{angle}{\def\myangle{#1}}
\define@key{cylindricalkeys}{radius}{\def\myradius{#1}}
\define@key{cylindricalkeys}{z}{\def\myz{#1}}
\tikzdeclarecoordinatesystem{cylindrical}%
{%
	\setkeys{cylindricalkeys}{#1}%
	\pgfpointadd{\pgfpointxyz{0}{0}{\myz}}{\pgfpointpolarxy{\myangle}{\myradius}}
}
\begin{tikzpicture}[z=0.1pt]
\node (a) at (4.5,3.57) {$L(n) $};
\node (a) at (5.9,5.4) {$\frac{Q_n}{2^{\delta(n)}}$};
\filldraw [] 
(5.3,5.4) circle (4pt);
\filldraw [] 
(5.2,3.57) circle (4pt);
\foreach \num in {1700,1707,...,2020}
\fill (cylindrical cs:angle=\num,radius=2.5,z=\num) circle (1.4pt);
\node (b) at (8.3,5.8)  [right=1.2cm,text width=9cm,font=\footnotesize] 
{So, we get trajectory passing through the Lucas numbers and Lucas-Pell numbers. We call this trajectory the Lucas-Pell trajectory. This trajectory should enjoy some properties that comes from the arithmetic of Lucas and Lucas-Pell numbers. For any natural number $n$, the Lucas-Pell trajectory at level $n$ is the sequence
	\[L(n) = \Psi_0(n), \Psi_1(n), \Psi_2(n), \dots, \Psi_{\lfloor{\frac{n}{2}}\rfloor}(n) =  \frac{Q_n}{2^{\delta(n)}}  \]
	where $\Psi_r(n) = \Psi\left( \begin{array}{cc|r} -1 & -3 & n \\ 1 & 6 & r \end{array} \right)$. Moreover, the terms $\Psi_r(n)$ satisfy the following polynomial identity 
	\[(-3)^{\lfloor{\frac{n}{2}}\rfloor}\frac{z^n+t^n}{(z+t)^{\delta(n)}} = \] 
	\[  \sum_{r=0}^{\lfloor{\frac{n}{2}}\rfloor}
	\Psi_r(n) ( z^2 + 6zt + t^2)^{\lfloor{\frac{n}{2}}\rfloor -r} (-z^2 -3zt-t^{2})^{r}. \]};
\end{tikzpicture}

\subsection{THE FIBONACCI-PELL TRAJECTORY} 
Let $n$ be any natural number. The $\Phi$ trajectory of order $n$ from  $(-1,-3)$ to $(1,6)$ starts with $F(n)$ and ends with $\frac{P_n}{2^{\delta(n-1)}}$, where $P_n$ are the Pell numbers. \\

\makeatletter
\define@key{cylindricalkeys}{angle}{\def\myangle{#1}}
\define@key{cylindricalkeys}{radius}{\def\myradius{#1}}
\define@key{cylindricalkeys}{z}{\def\myz{#1}}
\tikzdeclarecoordinatesystem{cylindrical}%
{%
	\setkeys{cylindricalkeys}{#1}%
	\pgfpointadd{\pgfpointxyz{0}{0}{\myz}}{\pgfpointpolarxy{\myangle}{\myradius}}
}
\begin{tikzpicture}[z=0.1pt]
\node (a) at 	(6.6,4.9) {$F(n) $};
\node (a) at 	(3.3,7.3) {$\frac{P_n}{2^{\delta(n-1)}}$};
\filldraw [] 
(3.3,6.8) circle (4pt);
\filldraw [] 
(6.0,5.0) circle (4pt);
\foreach \num in {1600,1607,...,2040}
\fill (cylindrical cs:angle=\num,radius=2.5,z=\num) circle (1.4pt);
\node (b) at (8.3,5.8)  [right=1cm,text width=9cm,font=\footnotesize] 
{So, we get trajectory passing through the Lucas numbers and Lucas-Pell numbers. We call this trajectory the Lucas-Pell trajectory. This trajectory should enjoy some properties that comes from the arithmetic of Lucas and Lucas-Pell numbers. For any natural number $n$, the Lucas-Pell trajectory at level $n$ is the sequence
	\[F(n) = \Phi_0(n), \Phi_1(n), \Phi_2(n), \dots, \Phi_{\lfloor{\frac{n-1}{2}}\rfloor}(n) =  \frac{P_n}{2^{\delta(n-1)}}  \]
	where $\Phi_r(n) = \Phi\left( \begin{array}{cc|r} -1 & -3 & n \\ 1 & 6 & r \end{array} \right)$. Moreover, the terms $\Phi_r(n)$ satisfy the following polynomial identity 
	\[(-3)^{\lfloor{\frac{n-1}{2}}\rfloor}\frac{z^n-t^n}{(z-t)(z+t)^{\delta(n-1)}} = \] 
	\[  \sum_{r=0}^{\lfloor{\frac{n-1}{2}}\rfloor}
	\Phi_r(n) ( z^2 + 6zt + t^2)^{\lfloor{\frac{n-1}{2}}\rfloor -r} (-z^2 -3zt-t^{2})^{r}. \]};
\end{tikzpicture}

\subsection{THE FIBONACCI ORBIT} 
Let $n$ be any even natural number. The $\Phi$ trajectory of order $n$ from $(-1,-3)$ to $(-1,+3)$ starts with $F_n$ and ends with $F_n$, where $F_n$ are the Fibonacci numbers. So, we get orbit passing through the Fibonacci numbers. We call this orbit the Fibonacci orbit at level $n$.  \\

\makeatletter
\define@key{cylindricalkeys}{angle}{\def\myangle{#1}}
\define@key{cylindricalkeys}{radius}{\def\myradius{#1}}
\define@key{cylindricalkeys}{z}{\def\myz{#1}}
\tikzdeclarecoordinatesystem{cylindrical}%
{%
	\setkeys{cylindricalkeys}{#1}%
	\pgfpointadd{\pgfpointxyz{0}{0}{\myz}}{\pgfpointpolarxy{\myangle}{\myradius}}
}
\begin{tikzpicture}[z=0.1pt]
\node (a) at (5.9,1.2) {$F(n)$};
\filldraw [] 
(5.31,1.32) circle (4pt);
\foreach \num in {705,710,...,1020}
\fill (cylindrical cs:angle=\num,radius=3,z=\num) circle (1.4pt);
\node (b) at (5.6,3.2)  [right=0.5cm,text width=9cm,font=\footnotesize] 
{Therefore, for any even natural number $n$, the Fibonacci orbit at level $n$ is the sequence
	\[F_n = \Phi_0(n), \Phi_1(n), \Phi_2(n), \dots, \Phi_{\lfloor{\frac{n}{2}}\rfloor}(n) = F_n\]
	where $\Phi_r(n) = \Phi\left( \begin{array}{cc|r} -1 & -3 & n \\ -1 & +3 & r \end{array} \right)$. Moreover, the terms $\Phi_r(n)$ satisfy the following polynomial identity 
	\[(-6)^{\lfloor{\frac{n-1}{2}}\rfloor}\frac{z^n-t^n}{z^2-t^2} = \] 
	\[  \sum_{r=0}^{\lfloor{\frac{n-1}{2}}\rfloor}
	\Phi_r(n) ( -z^2+3zt - t^2)^{\lfloor{\frac{n-1}{2}}\rfloor -r} (-z^2 -3zt-t^{2})^{r}. \]};
\end{tikzpicture}
\subsection{THE FIBONACCI-LUCAS TRAJECTORY} 
Let $n$ be any odd natural number. The $\Phi$ trajectory of order $n$ from $(-1,-3)$ to $(-1,+3)$ starts with $F(n)$ and ends with $L(n)$. So this way we get another new path connects Fibonacci numbers with Lucas numbers. \\

\makeatletter
\define@key{cylindricalkeys}{angle}{\def\myangle{#1}}
\define@key{cylindricalkeys}{radius}{\def\myradius{#1}}
\define@key{cylindricalkeys}{z}{\def\myz{#1}}
\tikzdeclarecoordinatesystem{cylindrical}%
{%
	\setkeys{cylindricalkeys}{#1}%
	\pgfpointadd{\pgfpointxyz{0}{0}{\myz}}{\pgfpointpolarxy{\myangle}{\myradius}}
}
\begin{tikzpicture}[z=0.2pt]
\node (a) at (-0.19,2.8) {$L(n)$};
\node (b) at (1.5,4.4) {$F(n)$};
\filldraw [] 
(2,4.8) circle (4pt);
\filldraw [] 
(-0.5,2.4) circle (4pt);
\foreach \num in {145,152,...,490}
\fill (cylindrical cs:angle=\num,radius=2,z=\num) circle (1.4pt);
\node (c) at (4.2,2.2)  [right=1cm,text width=9cm,font=\footnotesize] 
{ Therefore, for any odd natural number $n$, we call this path the Lucas-Fibonacci trajectory which is the sequence
	\[F(n) = \Phi_0(n), \Phi_1(n), \Phi_2(n), \dots, \Phi_{\lfloor{\frac{n}{2}}\rfloor}(n) =  L(n)  \]
	where $\Phi_r(n) = \Phi\left( \begin{array}{cc|r} -1 & -3 & n \\ -1 & +3 & r \end{array} \right)$. Moreover, the terms $\Phi_r(n)$ satisfy the following polynomial identity 
	\[(-6)^{\lfloor{\frac{n-1}{2}}\rfloor}\frac{z^n-t^n}{z-t} = \] 
	\[  \sum_{r=0}^{\lfloor{\frac{n-1}{2}}\rfloor}
	\Phi_r(n) ( -z^2 + 3zt - t^2)^{\lfloor{\frac{n-1}{2}}\rfloor -r} (-z^2 -3zt-t^{2})^{r}. \] 
};
\end{tikzpicture}

\subsection*{OBSERVATION}
For any odd natural number $n$, we should observe that the sequence
	\[F(n) = \Phi_0(n), \Phi_1(n), \dots, \Phi_{\lfloor{\frac{n}{2}}\rfloor}(n) =  L(n) = \Psi_0(n), \Psi_1(n), \dots, \Psi_{\lfloor{\frac{n}{2}}\rfloor}(n) =  F(n)   \]
is orbit passing through both of Fibonacci and Lucas numbers, where $\Phi_r(n) = \Phi\left( \begin{array}{cc|r} -1 & -3 & n \\ -1 & +3 & r \end{array} \right)$ and $\Psi_r(n) = \Psi\left( \begin{array}{cc|r} -1 & -3 & n \\ -1 & +3 & r \end{array} \right)$. We call such orbit the \say{Fibonacci-Lucas orbit}. We should ask for explicit closed formula for the terms of this orbit. The arithmetical characteristics for this orbit should be intimately related with the Fibonacci and Lucas numbers.

\makeatletter
\define@key{cylindricalkeys}{angle}{\def\myangle{#1}}
\define@key{cylindricalkeys}{radius}{\def\myradius{#1}}
\define@key{cylindricalkeys}{z}{\def\myz{#1}}
\tikzdeclarecoordinatesystem{cylindrical}%
{%
	\setkeys{cylindricalkeys}{#1}%
	\pgfpointadd{\pgfpointxyz{0}{0}{\myz}}{\pgfpointpolarxy{\myangle}{\myradius}}
}
\begin{tikzpicture}[z=0.1pt]
\node (a) at (5.9,1.2) {$F(n)$};
\filldraw [] 
(5.31,1.32) circle (5pt);
\node (a) at (1,4.1) {$L(n)$};
\filldraw [] 
(0.32,4.22) circle (6pt);

\foreach \num in {705,710,...,1020}
\fill (cylindrical cs:angle=\num,radius=3,z=\num) circle (1.4pt);
\node (b) at (5.9,3.5)  [right=1cm,text width=7.3cm,font=\footnotesize] 
{ The Fibonacci-Lucas trajectory of order $n$ from $(-1,-3)$ to $(-1,+3)$ and the Lucas-Fibonacci trajectory of order $n$ from $(-1,-3)$ to $(-1,+3)$ can be combined to construct a new orbit that go through both of the Fibonacci and Lucas numbers.  };
\end{tikzpicture}

		\subsection{THE MERSENNE ORBIT} 
Let $n$ be any even natural number. The $\Phi$ trajectory of order $n$ from $(2,-5)$ to $(2,+5)$ starts with $\frac{M_n}{3}$ and ends with $\frac{M_n}{3}$, where $M_n$ are the Mersenne numbers defined by $M_n=2^n-1$. So, we get orbit. We call this orbit the Mersenne orbit at level $n$.  \\

\makeatletter
\define@key{cylindricalkeys}{angle}{\def\myangle{#1}}
\define@key{cylindricalkeys}{radius}{\def\myradius{#1}}
\define@key{cylindricalkeys}{z}{\def\myz{#1}}
\tikzdeclarecoordinatesystem{cylindrical}%
{%
	\setkeys{cylindricalkeys}{#1}%
	\pgfpointadd{\pgfpointxyz{0}{0}{\myz}}{\pgfpointpolarxy{\myangle}{\myradius}}
}
\begin{tikzpicture}[z=0.1pt]
\node (a) at (5.8,1.21) {$\frac{M_n}{3}$};
\filldraw [] 
(5.35,1.32) circle (4pt);
\foreach \num in {705,710,...,1020}
\fill (cylindrical cs:angle=\num,radius=3,z=\num) circle (1.4pt);
\node (b) at (5.6,3.2)  [right=0.5cm,text width=9cm,font=\footnotesize] 
{For any even natural number $n$, the Mersenne orbit at level $n$ is the sequence
	\[\frac{M_n}{3} = \Phi_0(n), \Phi_1(n), \Phi_2(n), \dots, \Phi_{\lfloor{\frac{n}{2}}\rfloor}(n) = \frac{M_n}{3} \]
	where $\Phi_r(n) = \Phi\left( \begin{array}{cc|r} 2 & -5 & n \\ 2 & +5 & r \end{array} \right)$. Moreover, the terms $\Phi_r(n)$ satisfy the following polynomial identity 
	\[(20)^{\lfloor{\frac{n-1}{2}}\rfloor}\frac{z^n-t^n}{z^2-t^2} = \] 
	\[  \sum_{r=0}^{\lfloor{\frac{n-1}{2}}\rfloor}
	\Phi_r(n) ( 2z^2+5zt +2t^2)^{\lfloor{\frac{n-1}{2}}\rfloor -r} (2z^2 -5zt+2t^{2})^{r}. \]};
\end{tikzpicture}

		\subsection{THE MERSENNE TRAJECTORY} 
Let $n$ be any odd natural number. The $\Phi$ trajectory of order $n$ from $(2,-5)$ to $(2,+5)$ starts with $M_n$ and ends with $\frac{2^n+1}{3}$, where $M_n$ are the Mersenne numbers defined by $M_n=2^n-1$. We call this specific trajectory the Mersenne trajectory at level $n$.  \\

\makeatletter
\define@key{cylindricalkeys}{angle}{\def\myangle{#1}}
\define@key{cylindricalkeys}{radius}{\def\myradius{#1}}
\define@key{cylindricalkeys}{z}{\def\myz{#1}}
\tikzdeclarecoordinatesystem{cylindrical}%
{%
	\setkeys{cylindricalkeys}{#1}%
	\pgfpointadd{\pgfpointxyz{0}{0}{\myz}}{\pgfpointpolarxy{\myangle}{\myradius}}
}
\begin{tikzpicture}[z=0.1pt]
\node (a) at (-1,2.3) {$M_n$};
\node (b) at (1.5,0.4) {$\frac{2^n+1}{3}$};
\filldraw [] 
(0.8,0.5) circle (4pt);
\filldraw [] 
(-1,1.8) circle (4pt);
\foreach \num in {150,157,...,580}
\fill (cylindrical cs:angle=\num,radius=2,z=\num) circle (1.4pt);
\node (c) at (2.8,1.2)  [right=1cm,text width=9cm,font=\footnotesize] 
{For any odd natural number $n$, the Mersenne trajectory at level $n$ is the sequence
	\[M_n = \Phi_0(n), \Phi_1(n), \Phi_2(n), \dots, \Phi_{\lfloor{\frac{n}{2}}\rfloor}(n) = \frac{2^n+1}{3} \]
	where $\Phi_r(n) = \Phi\left( \begin{array}{cc|r} 2 & -5 & n \\ 2 & +5 & r \end{array} \right)$. Moreover, the terms $\Phi_r(n)$ satisfy the following polynomial identity 
	\[(20)^{\lfloor{\frac{n-1}{2}}\rfloor}\frac{z^n-t^n}{z-t} = \] 
	\[  \sum_{r=0}^{\lfloor{\frac{n-1}{2}}\rfloor}
	\Phi_r(n) ( 2z^2+5zt +2t^2)^{\lfloor{\frac{n-1}{2}}\rfloor -r} (2z^2 -5zt+2t^{2})^{r}. \]};
\end{tikzpicture}

 		\subsection{THE CHEBYSHEV-DICKSON TRAJECTORY OF THE FIRST KIND} 
 We can establish trajectory that connects the Chebyshev polynomials of first kind with
 the Dickson polynomials of the first kind with parameter $\alpha$. For example, the $\Psi$ trajectory of order $n$ from $(1,2-4x_1^2)$ to $(-\alpha,-2\alpha+x_2^2)$ starts with $\frac{2^{\delta(n+1)}}{x_1^{\delta(n)}}T_n(x_1)$ and ends with $x_2^{-\delta(n)}D_n(x_2, \alpha)$. So this way we get new path connects the Chebyshev polynomials of the first kind with the Dickson polynomials of the first kind with parameter $\alpha$. \\
 
 \makeatletter
 \define@key{cylindricalkeys}{angle}{\def\myangle{#1}}
 \define@key{cylindricalkeys}{radius}{\def\myradius{#1}}
 \define@key{cylindricalkeys}{z}{\def\myz{#1}}
 \tikzdeclarecoordinatesystem{cylindrical}%
 {%
 	\setkeys{cylindricalkeys}{#1}%
 	\pgfpointadd{\pgfpointxyz{0}{0}{\myz}}{\pgfpointpolarxy{\myangle}{\myradius}}
 }
 \begin{tikzpicture}[z=0.2pt]
 \node (a) at (1.9,1.1) {$x_2^{-\delta(n)}D_n(x_2, \alpha)$};
 \node (b) at (1.6,4.4) {$\frac{2^{\delta(n+1)}}{x_1^{\delta(n)}}T_n(x_1)$};
 \filldraw [] 
 (2,4.8) circle (4pt);
 \filldraw [] 
 (1.9,1.5) circle (4pt);
 \foreach \num in {45,52,...,490}
 \fill (cylindrical cs:angle=\num,radius=2,z=\num) circle (1.4pt);
 \node (c) at (4.2,2.2)  [right=0.5cm,text width=9cm,font=\footnotesize] 
 {Studying this particular trajectory should help understand the arithmetic of Chebyshev polynomials of the first kind and Dickson polynomials of the first kind with parameter $\alpha$. We call this path the Chebyshev-Dickson trajectory of the first kind which is the sequence
 	\[ \frac{2^{\delta(n+1)}}{x_1^{\delta(n)}}T_n(x_1) = \Psi_0(n), \Psi_1(n),\dots, \Psi_{\lfloor{\frac{n}{2}}\rfloor}(n) = x_2^{-\delta(n)}D_n(x_2, \alpha)  \]
 	where $\Psi_r(n) = \Psi\left( \begin{array}{cc|r} 1 & 2-4x_1^2 & n \\ -\alpha & -2\alpha+x_2^2 & r \end{array} \right)$. Moreover, the terms $\Psi_r(n)$ satisfy the following polynomial identity 
 	\[(x_{2}^2 - 4\alpha x_{1}^2)^{\lfloor{\frac{n}{2}}\rfloor}\frac{z^n+t^n}{(z+t)^{\delta(n)}} = \] 
 	\[  \sum_{r=0}^{\lfloor{\frac{n}{2}}\rfloor}
 	\Psi_r(n) \Big( -\alpha z^2 -(2\alpha-x_2^2) zt  -\alpha t^2 \Big)^{\lfloor{\frac{n}{2}}\rfloor -r} (z^2+   (2-4x_1^2)zt+t^{2})^{r}. \] 
 };
 \end{tikzpicture}

  		\subsection{THE CHEBYSHEV-DICKSON TRAJECTORY OF THE SECOND KIND} 
 We can establish trajectory that connects the Chebyshev polynomials of second kind with
 the Dickson polynomials of the second kind with parameter $\alpha$. For example, the $\Phi$ trajectory of order $n$ from $(1,2-4x_1^2)$ to $(-\alpha,-2\alpha+x_2^2)$ starts with $(2x_1)^{-\delta(n-1)}U_{n-1}(x_1)$ and ends with $x_2^{-\delta(n-1)}E_{n-1}(x_2, \alpha)$. So this way we get new path connects Chebyshev polynomials of the second kind with Dickson polynomials of the second kind with parameter $\alpha$. \\
 
 \makeatletter
 \define@key{cylindricalkeys}{angle}{\def\myangle{#1}}
 \define@key{cylindricalkeys}{radius}{\def\myradius{#1}}
 \define@key{cylindricalkeys}{z}{\def\myz{#1}}
 \tikzdeclarecoordinatesystem{cylindrical}%
 {%
 	\setkeys{cylindricalkeys}{#1}%
 	\pgfpointadd{\pgfpointxyz{0}{0}{\myz}}{\pgfpointpolarxy{\myangle}{\myradius}}
 }
 \begin{tikzpicture}[z=0.2pt]
 \node (a) at (1.5,1.1) {$x_2^{-\delta(n-1)}E_{n-1}(x_2, \alpha)$};
 \node (b) at (1.2,4.4) {$(2x_1)^{-\delta(n-1)}U_{n-1}(x_1)$};
 \filldraw [] 
 (2,4.8) circle (4pt);
 \filldraw [] 
 (1.9,1.5) circle (4pt);
 \foreach \num in {45,52,...,490}
 \fill (cylindrical cs:angle=\num,radius=2,z=\num) circle (1.4pt);
 \node (c) at (4.2,2.2)  [right=0.5cm,text width=9cm,font=\footnotesize] 
 {This trajectory should help understand the arithmetic of the Chebyshev polynomials of the second kind and the Dickson polynomials of the second kind with parameter $\alpha$. We call this path the Chebyshev-Dickson trajectory of the second kind which is the sequence
 	\[ \frac{U_{n-1}(x_1)}{(2x_1)^{\delta(n-1)}} = \Phi_0(n), \Phi_1(n),\dots, \Phi_{\lfloor{\frac{n}{2}}\rfloor}(n) = \frac{E_{n-1}(x_2, \alpha)}{x_2^{\delta(n-1)}}  \]
 	where $\Phi_r(n) = \Phi\left( \begin{array}{cc|r} 1 & +2-4x_1^2 & n \\ -\alpha & -2\alpha+x_2^2 & r \end{array} \right)$. Moreover, the terms $\Phi_r(n)$ satisfy the following polynomial identity 
 	\[(x_{2}^2 - 4\alpha x_{1}^2)^{\lfloor{\frac{n-1}{2}}\rfloor}\frac{z^n-t^n}{(z-t)(z+t)^{\delta(n-1)}} = \] 
 	\[  \sum_{r=0}^{\lfloor{\frac{n-1}{2}}\rfloor}
 	\Phi_r(n) \Big( -\alpha z^2 -(2\alpha-x_2^2) zt  -\alpha t^2 \Big)^{\lfloor{\frac{n-1}{2}}\rfloor -r} (z^2+   (2-4x_1^2)zt+t^{2})^{r}. \] 
 };
 \end{tikzpicture}
 
 Therefore, as we see, with the $\Psi$ and $\Phi$ trajectories, many well-known sequences are not now isolated; meaning we can connect different elements from the list that contains Chebyshev polynomials of the first and second kind, Dickson polynomials of the first and second kind, Lucas and Fibonacci numbers, Mersenne numbers, Pell polynomials, Pell-Lucas polynomials, Fermat numbers, and $2^n \pm 1$. 
  \subsection*{Trajectory From Difference of Powers to Another and Open Questions} Also, for any natural number $n$, we show in this paper that we can also establish a trajectory that connects the polynomials $\frac{x^n-y^n}{(x-y) (x+y)^{\delta(n-1)}}$ with the polynomials $\frac{z^n-t^n}{(z-t) (z+t)^{\delta(n-1)}}$. \\
 
 \makeatletter
 \define@key{cylindricalkeys}{angle}{\def\myangle{#1}}
 \define@key{cylindricalkeys}{radius}{\def\myradius{#1}}
 \define@key{cylindricalkeys}{z}{\def\myz{#1}}
 \tikzdeclarecoordinatesystem{cylindrical}%
 {%
 	\setkeys{cylindricalkeys}{#1}%
 	\pgfpointadd{\pgfpointxyz{0}{0}{\myz}}{\pgfpointpolarxy{\myangle}{\myradius}}
 }
 \begin{tikzpicture}[z=0.2pt]
 \node (a) at (1.5,1.4) {$\frac{x^n-y^n}{(x-y) (x+y)^{\delta(n-1)}}$};
 \node (b) at (1.2,4.1) {$\frac{z^n-t^n}{(z-t) (z+t)^{\delta(n-1)}}$};
 \filldraw [] 
 (2.35,4.55) circle (4pt);
 \filldraw [] 
 (0.7,0.7) circle (4pt);
 \foreach \num in {240,249,...,490}
 \fill (cylindrical cs:angle=\num,radius=1.5,z=\num) circle (1.4pt);
 \node (c) at (4.2,2.3)  [right=0.15cm,text width=10cm,font=\footnotesize] 
 {There are many open questions to the trajectory of polynomials 
 	\[ \frac{x^n-y^n}{(x-y) (x+y)^{\delta(n-1)}} = \Phi_0(n), \Phi_1(n),\dots, \Phi_{\lfloor{\frac{n-1}{2}}\rfloor}(n) = \frac{z^n-t^n}{(z-t) (z+t)^{\delta(n-1)}} \]
 	where $\Phi_r(n) = \Phi\left( \begin{array}{cc|r} +xy & -x^2-y^2 & n \\ -zt & +z^2+t^2 & r \end{array} \right)$, which are the coefficients of the following polynomial expansion 
 	\[(zx-ty)^{\lfloor{\frac{n-1}{2}}\rfloor}(zy-tx)^{\lfloor{\frac{n-1}{2}}\rfloor}\frac{u^n-v^n}{(u-v)(u+v)^{\delta(n-1)}} = \] 
 	\[  \sum_{r=0}^{\lfloor{\frac{n-1}{2}}\rfloor}
 	\Phi_r(n) (zu-tv)^{\lfloor{\frac{n-1}{2}}\rfloor -r} (zv-tu)^{\lfloor{\frac{n-1}{2}}\rfloor -r} ((ux-vy)^{r}(uy-vx))^{r} \] 
 };
 \end{tikzpicture}
 
 For this particular trajectory, we observe that for any numbers $x,y,z,t,xz \neq yt, xt\neq yz$, and for any natural number $n$, the following identity hold 
 \[  \sum_{r=0}^{\lfloor{\frac{n-1}{2}}\rfloor} \Phi\left( \begin{array}{cc|r} +xy & -x^2-y^2 & n \\ -zt & +z^2+t^2 & r \end{array} \right) = \Phi(xy+zt,-x^2-y^2-z^2-t^2,n).
 \] 		
 
 This particular trajectory should also be surrounded by lots of mathematical questions. One of these, when the endpoints coincident with each other, again it gives an orbit. Also, if such orbit exists, we get a solution to many unsolved Diophantine equations. If such orbit exists, what are the general characteristics of such orbit? For example, what is the characteristics of this orbit, if exist, for $n=5$. And it is natural to ask whether there exist such orbit for any natural number $n$.  Alternatively, this is equivalent to ask whether there is  any nontrivial integer solutions for $(x,y,z,t)$ and any natural number $n$ for following Diophantine equation 
 \[     \frac{x^n-y^n}{(x-y)(x+y)^{\delta(n-1)}} = \frac{z^n-t^n}{(z-t)(z+t)^{\delta(n-1)}}. \]
	 \subsection*{Acknowledgments}
	 I would like to thank University of Bahrain for their kind support. 
	Special great thanks to Bruce Reznick for his kind motivation, great advice, and for useful conversations. I am also appreciated for Alexandru Buium, Bruce Berndt, and Ahmed Mater.

\medskip

\begin{thebibliography}{99}
	
	\bibitem{Bini}
	U. Bini, 	\textit{Sopra alcune congruenze}, Periodico di Mat. 22(3) (1907) 180--183.
	\bibitem{Boutin}
	A. Boutin and P. M. Gonzalez Quijano,	\textit{Sur la question 3076 (de Barisien),} L’Interm. des Math. 14 (1907), 22--23.
		
\bibitem{Mersenne}
 Paula Catarino, Helena Campos, Paula Vasco, \textit{On the Mersenne Sequence}, Annales Mathematicae et Informaticae, 46. (2016).  37--53. 
		
	\bibitem{Dickson}
	Leonard Eugene Dickson, \textit{History of the Theory of Numbers}, Volume II: Diophantine Analysis, (2005).
	
	
	\bibitem{Gould}
	H. W. Gould, 	\textit{Combinatorial Identities}, Morgantown Printing and Binding Co., (1972)
	\bibitem{kummer}
	E.E. Kummer, 	\textit{De aequatione $x^{2\lambda} + y^{2\lambda} = z^{2\lambda}$ per numeros integros resolvenda}. J. reine u. angew. Math., 17, (1837), 203–209.
	\bibitem{Mention}
	J. Mention, 	\textit{Solution de la question 70}, Nouv. Ann. Math. 6 (1847), 399--400
	\bibitem{1} M. I. Mostafa, \emph{A new approach to polynomial identities}, The Ramanujan Journal, (2004) Academic Publishers, Springer, 8, pp.  423--457.
	\bibitem{Ribenboim}
	Paulo Ribenboim,	\textit{Fermat’s Last Theorem for Amateurs}, Springer-Verlag, (1999).
	\bibitem{Vachette}
	A. Vachette, 	\textit{Note sur certains développements et solution des questions 461, 468, et 479}, Nouv. Ann. de Math. 20 (1861), 155--174.
	\bibitem{25}
	Pascal Ochem and Micha{\"e}l Rao, 	\textit{Odd perfect numbers are greater than $10^{1500}$}, Mathematics of Computation, Vol. 81, No. 279 (2012), 1869--1877.
	
	\bibitem{Luca}
	Michal Krizek, Florian Luca, and Lawrence Somer, \textit{17 Lectures on Fermat Numbers: From Number Theory to Geometry}, Springer-Verlag New York, (2013).
	
	
	
	\bibitem{222} R. Lidl, G.L. Mullen and G. Turnwald, \textit{Dickson Polynomials}, Pitman Monographs in Pure and Applied Mathematics, Vol. 65, Addison-Wesley, Reading, (1993).
	
	\bibitem{333}
	J.C. Mason and D.C. Handscomb, \textit{Chebyshev Polynomials}, Chanpman-Hall, (2002).
	
	
	\bibitem{Perron}
	Oskar Perron, 	\textit{Algebra I, Die Grundlagen}, Walter de Gruyter, (1932).
	
	
	

\end{thebibliography}
\end{document}